\newtheorem{theorem}{Theorem}[section]
\newtheorem{lemma}[theorem]{Lemma}
\newtheorem{proposition}[theorem]{Proposition}
\theoremstyle{definition}
\newtheorem{definition}[theorem]{Definition}
\newtheorem{remark}[theorem]{Remark}
\newcommand{\IR}{\mathbb{R}}
\newcommand{\IC}{\mathbb{C}}
\newcommand{\IN}{\mathbb{N}}
\newcommand{\IZ}{\mathbb{Z}}
\newcommand{\cM}{\mathcal{M}}
\newcommand{\cE}{\mathcal{E}}
\newcommand{\cF}{\mathcal{F}}
\newcommand{\cS}{\mathcal{S}}
\newcommand{\cP}{\mathcal{P}}
\newcommand{\cL}{\mathcal{L}}
\renewcommand{\L}{\mathrm{L}}
\newcommand{\B}{\mathrm{B}}
\newcommand{\F}{\mathrm{F}}
\renewcommand{\T}{\mathrm{T}}
\newcommand{\Z}{\mathrm{Z}}
\newcommand{\ind}{{\mathbf{1}}}
\newcommand{\D}{{\mathrm{D}}}
\renewcommand{\d}{\mathrm{d}}
\newcommand{\loc}{\mathrm{loc}}
\newcommand{\esssup}{\mathrm{ess\, sup}}
\DeclareMathOperator{\supp}{supp}
\DeclareMathOperator{\Id}{Id}
\DeclareMathOperator{\Rg}{\mathcal{R}}
\numberwithin{equation}{section}
\title{A new scale of function spaces characterizing homogeneous Besov spaces}
\author{Pascal Auscher}
\address{Université Paris-Saclay \\ France--Australia Mathematical Sciences and Interactions ANU -- CNRS International Research Laboratory \\ Canberra \\ ACT 2601 \\ Australia.}
\email{pascal.auscher@universite-paris-saclay.fr}
\author{Sebastian Bechtel}
\address{Université Paris-Saclay, CNRS \\ Laboratoire de Mathématiques d’Orsay \\ 91405 Orsay \\ France}
\email{sebastian.bechtel@universite-paris-saclay.fr}
\author{Luca Haardt}
\address{Karlsruhe Institute of Technology, Department of Mathematics, 76131 Karlsruhe, Germany}
\email{luca.haardt@kit.edu}
\keywords{Tent spaces and $\Z$-spaces, Besov--Triebel--Lizorkin spaces, duality, real and complex interpolation, Hardy--Littlewood--Sobolev embeddings}
\subjclass[2020]{42B35 , 46E30, 46B70}
\date{\today}
\thanks{This project has received funding from the European Union’s Horizon 2020 research and innovation programme under the Marie Skłodowska-Curie grant agreement No 101034255 \euflag{}.
A CC-BY 4.0 \url{https://creativecommons.org/licenses/by/4.0/} public copyright license has been applied by the authors to the present document and will be applied to all subsequent versions up to the Author Accepted Manuscript arising from this submission.}
\dedicatory{This article is dedicated to Prof.\@ Hans Triebel on the occasion of his 90th birthday.}
\begin{document}

\begin{abstract}
    We introduce and study a new scale of function spaces that characterize the homogeneous Besov spaces $\mathrm{\dot B}^{\beta}_{p,q}$, hence completing earlier work by Ullrich. These new spaces include the ones introduced by Barton and Mayboroda, and systematically studied by Amenta under the name of weighted $\mathrm{Z}$-spaces, for the purpose of boundary value problems with $\mathrm{\dot B}^{\beta}_{p,p}$ data. They are the counterparts to the weighted tent spaces with Whitney averages, developed by Huang, and arise as their real interpolants. We describe their functional analytic properties: completeness, duality, embeddings, as well as their real and complex interpolants.
\end{abstract}

\maketitle

\allowdisplaybreaks

\section{Introduction}

\noindent This paper introduces a new scale of function spaces, naturally arising in the characterization of homogeneous Besov spaces, and establishes a full function space analysis for them. Characterizations of (homogeneous) Besov spaces $\dot \B^\beta_{p,q}$ and Triebel--Lizorkin spaces $\dot \F^\beta_{p,q}$ have attracted significant interest in recent decades due to their instrumentality in areas such as harmonic analysis, partial differential equations, and more. Traditionally, these spaces are defined via discrete Littlewood-Paley blocks $(\Delta_k)_{k\in\IZ}$ that are, in fact, convolution operators with a kernel having compact support away from zero in Fourier-space; see the monograph of Triebel \cite[Sec.\@ 5]{Triebel1}. In contrast, for applications to partial differential equations, it is of interest to have characterizations with respect to a continuous spectrum and using kernels without compactness of the Fourier-support. For example, kernels whose Fourier-transform vanishes sufficiently fast at zero and infinity, such as
\begin{align*}
    k_t(x) = \cF^{-1}\big(|t\xi|^N e^{-|t\xi|^2}\big)(x), \quad \text{for $t>0$ and some $N\in\IN$,}
\end{align*}
are of particular interest, because they lead to characterizations via the well-known Gauss--Weierstrass semigroup.\\

First continuous characterizations for general kernels of this kind were given by Triebel in \cite[Sec.\@ 2.4.2 + 2.5.1]{Triebel2} in the range $1<p<\infty$, $1<q\leq \infty$ and $\beta\in\IR$, where the case $p=\infty$ is included for Besov spaces. Later, they were complemented by Ullrich in \cite{Ullrich1} for the full range of parameters $0<p<\infty$ and $0<q\leq \infty$. Again he included the case $p=\infty$ for Besov spaces. Eventually, Hui and Taibleson \cite{Bui_Taibleson} treated also the Triebel--Lizorkin spaces in the case $p=\infty$ and $0<q\leq \infty$. For further development see also \cites{Kempka, Rauhut, Ullrich2,Ullrich3} and the references therein.
We take a closer look at Ullrich's work \cite{Ullrich1}.
Interestingly, in his paper he provides several characterizations for Triebel--Lizorkin spaces, which always have a corresponding counterpart for Besov spaces, except for one. More precisely, this additional characterization for Triebel--Lizorkin spaces uses (weighted) tent spaces $\T^{p,q}_\beta$ on $\IR^{d+1}_+= (0,\infty)\times \IR^d $. They were first introduced by Coifman, Meyer and Stein \cite{Coifman_Meyer_Stein} in the unweighted case $\beta=0$, and later studied by many other authors in the weighted case $\beta\in \IR$; see for instance \cites{Amenta2, Hofmann_Mayboroda_McIntosh}.
Roughly speaking, the tent space characterization reads
\begin{align*}
    f\in \dot \F^\beta_{p,q} \quad \Longleftrightarrow \quad   \cE(f)(t,x)\coloneqq  (\Phi_t\ast f)(x) \in \T^{p,q}_\beta,
\end{align*}
where $\Phi$ is a suitable kernel and $\Phi_t = t^{-d}\Phi(\cdot/t)$; see Section~\ref{sec: characterizations} for more details. In this construction, $(\Phi_t\ast f)(x)$ is an extension of $f$ to the upper half-space $\IR^{d+1}_+$, and $\T^{p,q}_\beta$ the extension space of $\dot \F^\beta_{p,q}$. However, a corresponding \enquote{tent space characterization} for the closely related Besov spaces $\dot \B^\beta_{p,q}$, at least when $p\neq q$, has remained a notable gap in the theory. Therefore, one central question of our paper is the following:
\vspace{0.5cm}
\begin{itemize}
    \item \textit{Is there a scale of function spaces, similar to tent spaces, that characterizes the homogeneous Besov spaces $\dot \B^\beta_{p,q}$}?
\end{itemize}
\vspace{0.5cm}
In the following, we use results from the literature to derive a suitable candidate for that scale of spaces. To do so, we want to leverage known real-interpolation properties for Triebel--Lizorkin spaces as well as for weighted tent spaces. To this end, recall that Barton and Mayboroda introduced in \cite{Barton_Mayboroda} a scale of function spaces for analyzing boundary value problems with fractional $\dot \B^\beta_{p,p}$-data; see also \cites{AA, Auscher_Egert}. Additionally, these spaces recently found applications for solving nonlinear parabolic problems; see \cite{Auscher_Bechtel}. Under the name of $\Z$-spaces, they were further investigated by Amenta in \cite{Amenta2}. In particular, he shows that the real interpolation spaces of weighted tent spaces can be identified with $\Z$-spaces. The following incomplete diagram uses these facts along with the tent space characterization of $\dot \F^\beta_{p,q}$ to identify a candidate for the characterization in the special case $\dot \B^\beta_{p,p}$.

\begin{figure}[ht]
\centering
\begin{tikzcd}[column sep=10pt, row sep=20pt]
        (\T^{p_0,r}_{\beta_0},\T^{p_1,r}_{\beta_1})_{\theta, p} \ar[r, phantom, "="] & \Z^{p,r}_\beta \\
        (\dot\F_{p_0,r}^{\beta_0},\dot\F_{p_1,r}^{\beta_1})_{\theta, p} \ar[u, "\cE"] \ar[r, phantom, "="] & \dot \B^{\beta}_{p,p}
\end{tikzcd}
\caption{Here, for $0<p_0,p_1 <\infty$, $r\in[1,\infty)$ and $\beta_0,\beta_1\in \IR$ with $\beta_0 \neq \beta_1$, we define $\frac{1}{p}= \frac{1-\theta}{p_0} + \frac{\theta}{p_1}$ and $\beta = (1-\theta) \beta_0 + \theta \beta_1$.}
\label{fig: diagram}
\end{figure}

This suggests that the $\Z$-spaces from \cites{Amenta2, Barton_Mayboroda} are the appropriate extension spaces to characterize $\dot \B^\beta_{p,p}$. We point out that the extension space $\Z^{p,r}_\beta$ introduces a new free parameter $r$ which does not appear on the level of $\dot \B^\beta_{p,p}$. This raises the question whether there is a new scale of spaces $\Z^{p,q,r}_\beta$ that characterizes $\dot \B^{\beta}_{p,q}$ also in the case $p\neq q$, and satisfies $\Z^{p,q,r}_\beta=\Z^{p,r}_\beta$ if $p=q$. Indeed, by our first main result, Theorem~\ref{thm: char of lifted Besov data}, for $r\in(0,\infty]$ the scale of spaces $\Z^{p,q,r}_\beta$, consisting of measurable functions on $\IR^{d+1}_+$ and defined by
\begin{align*}
    \Z^{p,q,r}_\beta = \bigg\{f\in\L^0(\IR^{d+1}_+): \bigg(\int\limits_{0}^\infty \bigg\|\bigg(\fint\limits_{\frac{t}{2}}^t \fint\limits_{B(\cdot,t)} |s^{-\beta}f(s,y)|^r \, \d y \d s\bigg)^\frac{1}{r} \bigg\|_{\L^p}^q\,\frac{\d t}{t}\bigg)^\frac{1}{q}<\infty\bigg\},
\end{align*}
possesses all the desired properties listed above. In the case of infinite parameters, the usual replacements of the integrals by an $\esssup$ are understood. As an illustration, we obtain a Gauss--Weierstrass characterization of $\dot \B^\beta_{p,q}$ within the space $\Z^{p,q,r}_\beta$, see Proposition~\ref{prop: Gauss--Weierstrass char}, saying that if $\beta<0$, then
\begin{align*}
    f\in \dot \B^\beta_{p,q} \quad \Longleftrightarrow \quad   \cE_\Delta(f)(t,x)\coloneqq  (e^{t^2\Delta}f)(x) \in \Z^{p,q,r}_\beta,
\end{align*}
for any $0<r\leq \infty$ with equivalence of norms $\|f\|_{\dot \B^\beta_{p,q}} \simeq \| \cE_\Delta(f) \|_{\Z^{p,q,r}_\beta}$, where $\Delta$ denotes the Laplace operator.

\subsection{Relation to and consequences for tent spaces}
As previously discussed and demonstrated in Figure~\ref{fig: diagram}, real interpolation of two weighted tent spaces $\T^{p,q}_\beta$ yields $\Z^{p,q}_\beta$-spaces. Here, we replaced the parameter $r$ by $q$. There, the interpolation was only done in the $p$-parameter for some fixed $q$. However, to the best of our knowledge, nothing is known for real interpolation of weighted tent spaces in the parameter $q$ when the parameter $p$ is fixed. More precisely, there is no known characterization of $(\T^{p,q_0}_{\beta_0},\T^{p,q_1}_{\beta_1})_{\theta, q}$. In contrast, on the level of Besov spaces it is indeed known that the corresponding real interpolation of Triebel--Lizorkin spaces $(\dot \F_{p,q_0}^{\beta_0},\dot\F_{p,q_1}^{\beta_1})_{\theta, q}$ can be identified with the Besov space $\dot \B^\beta_{p,q}$; see \cite[Thm.\@ 2.4.2]{Triebel1} for more details. Together with the fact that the $\Z^{p,q,r}_\beta$-space is the extension space of $\dot \B^\beta_{p,q}$ with some free parameter $r$, as will be established in Theorem~\ref{thm: char of lifted Besov data}, we have the following incomplete diagram:

\begin{figure}[ht]
\centering
\begin{tikzcd}[column sep=10pt, row sep=20pt]
        (\T^{p,q_0}_{\beta_0},\T^{p,q_1}_{\beta_1})_{\theta, q}  & \Z_\beta^{p,q,r} \\
        (\dot\F_{p,q_0}^{\beta_0},\dot\F_{p,q_1}^{\beta_1})_{\theta, q} \ar[u, "\cE"] \ar[r, phantom, "="] & \dot \B^{\beta}_{p,q}\ar[u, "\cE"]
\end{tikzcd}
\caption{Here, we have $0<p<\infty$, $0<q,q_0,q_1,r \leq\infty$ and $\beta_0,\beta_1\in \IR$ with $\beta_0 \neq \beta_1$ and $\beta = (1-\theta) \beta_0 + \theta \beta_1$.}
\label{fig: diagram2}
\end{figure}

Figure~\ref{fig: diagram2} suggests that the real interpolation space of two weighted tent spaces with fixed parameter $p$ but different parameters $q$ can be identified with a $\Z^{p,q,r}_\beta$-space. However, there is no reason that the free parameter $r$ appears in the real interpolation process. To overcome this anomaly, we need to introduce tent spaces with an additional parameter, which we call $\T^{p,q,r}_\beta$. We define these spaces of measurable functions on $\IR^{d+1}_+$ via
\begin{align*}
    \T^{p,q,r}_\beta = \bigg\{f\in\L^0(\IR^{d+1}_+): \bigg\| \bigg( \int\limits_0^\infty \bigg(\fint\limits_{\frac{t}{2}}^t \fint\limits_{B(\cdot,t)} |s^{-\beta}f(s,y)|^r \,\d y \d s\bigg)^\frac{q}{r}\,\frac{\d t}{t}\bigg)^\frac{1}{q}\bigg\|_{\L^p}<\infty\bigg\}.
\end{align*}
To complete Figure~\ref{fig: diagram2} and get a coherent theory, we answer the following two questions in the affirmative:
\vspace{0.5cm}
\begin{itemize}
    \item \textit{Can $\Z^{p,q,r}_\beta$ be recovered by real interpolation of $\T^{p,q,r}_\beta$-spaces}?

    \item \textit{Does the scale of function spaces $\T^{p,q,r}_\beta$ characterize homogeneous Triebel--Lizorkin spaces $\dot \F^\beta_{p,q}$}?
\end{itemize}
\vspace{0.5cm}
Lastly, we want to mention that in the case $r=q$ we obtain $\T^{p,q,r}_\beta = \T^{p,q}_\beta$ and that the spaces $\T^{p,q,r}_\beta$ are the weighted tent spaces with Whitney-averages of Huang \cite{Huang} with equivalent (quasi-)norm; see Section~\ref{subsec: basic prop}.

\subsection{Roadmap and methods}

Our paper subdivides into three parts: First, we characterize $\dot \B^\beta_{p,q}$ and $\dot \F^\beta_{p,q}$ by means of $\Z^{p,q,r}_\beta$ and $\T^{p,q,r}_\beta$, respectively. This is the content of Section~\ref{sec: characterizations}. Second, we investigate functional-analytic properties of $\Z^{p,q,r}_\beta$ in Section~\ref{sec:Z_spaces}. These properties include completeness, density and embeddings, as well as a full duality theory including the quasi-Banach range. Third, we develop a comprehensive interpolation theory for $\Z$-spaces. More precisely, we investigate their real interpolation behavior in Section~\ref{sec: real interpolation}, followed by the complex case in Section~\ref{sec: complex int}.

\subsubsection*{Characterization of Besov and Triebel--Lizorkin spaces}

This part relies heavily on the preliminary work by Ullrich~\cite{Ullrich1}. He has derived powerful characterizations for Besov and Triebel--Lizorkin spaces using maximal functions. With them at hand, it is fairly straightforward to show the inclusion of $\dot \B^\beta_{p,q}(\IR^d)$ and $\dot \F^\beta_{p,q}(\IR^d)$ into our four parameter $\Z$-space and tent space, respectively. The converse inclusion is harder as we have to dominate the pointwise maximal function by averages. The calculation is unfortunately lengthy and technical. The characterizations that we obtain in the end are in Theorem~\ref{thm: char of lifted Besov data} and Theorem~\ref{thm: char of lifted Triebel data}.

\subsubsection*{Functional-analytic properties of $\Z$-spaces}

To establish basic properties such as density, completeness and a \enquote{change of angle} formula, we mostly combine basic estimates for integrals with covering arguments adapted to the geometry of Whitney boxes. Also, arguments from~\cite{Amenta} are recycled. To investigate duality, the strategy is to embed our $\Z$-spaces into simpler spaces for which duality results are already known. In the Banach case (Theorem~\ref{thm: duality banach range}), we embed into a weighted vector-valued Lebesgue space. Next, as a general tool, we develop a dyadic characterization, which follows from straightforward covering arguments. From this description, embedding results of Hardy--Littlewood--Sobolev-type (Theorem~\ref{thm: weigthed embedding}) follow readily. Now, to obtain furthermore duality results in the quasi-Banach case (Theorem~\ref{thm: duality full range}), we can embed quasi-Banach $\Z$-spaces into $\Z$-spaces in the Banach range, for which we have already developed the duality theory.

\subsubsection*{Interpolation theory of $\Z$-spaces}

First, we develop real interpolation results. There are two of them: one (Theorem~\ref{thm: real int of Z-spaces}) with fixed parameter $p$, but a lot of flexibility for the parameters $q_i$, and another one in which $p$ can vary as well (Theorem~\ref{thm: real interpolation Z spaces II}). The proof of the first of them is based on direct estimates of the $K$-functional. The argument for the second one uses the dyadic characterization of $\Z$-spaces to embed them into weighted vector-valued sequence spaces. For them, real interpolation is known.

As a consequence, we deduce that in the case $p < \infty$, all our $\Z$-spaces can be obtained as real interpolants of tent spaces (Proposition~\ref{prop: real int tent space}). All of these results are inspired by \cite[Thm.\@ 2.4.2]{Triebel1}, where the author shows analogous results for Besov--Triebel--Lizorkin spaces.

Second, we develop a complex interpolation theory. In the \enquote{reflexive regime}, we use our embedding into a weighted vector-valued Lebesgue space to establish one inclusion of the interpolation identity, whereas the other one follows from a duality argument using the results from Section~\ref{sec:Z_spaces}.
This special case is stated in Lemma~\ref{lem: complex int 1<}.
To obtain an extension to the quasi-Banach range (Theorem~\ref{thm: complex int full range}), the complex interpolation task is first reformulated into a question of Calder\'{o}n products. To those, convexity arguments apply, and we can reduce matters to the \enquote{reflexive regime}, for which we have already established complex interpolation.

\subsection{Acknowledgments}
The authors would like to thank Emiel Lorist for pointing out a gap in one of our arguments. The third author thanks the Laboratoire de Mathématiques d’Orsay for hospitality and \textit{Studienstiftung des deutschen Volkes} for their support.

\subsection{Notation}
\label{sec: notation}

We use the following notation throughout the paper.
We denote by $\IN,\IN_0,\IZ, \IR$ the set of all positive integers, all non-negative integers, all integers, and all real numbers, respectively. Throughout, $d\geq 1$ denotes the dimension of the underlying Euclidean space. We write $\IR^{d+1}_+ = (0,\infty)\times \IR^{d}$ for the upper half-space. We denote the open ball centered at $x\in \IR^d$ with radius $t>0$ by $B(x,t)$. The Whitney box associated to $(t,x)\in\IR^{d+1}_+$ is denoted by $W(t,x) = (\frac{t}{2},t) \times B(x,t)$. We use the symbol $\|\cdot\|_X$ for the (quasi-)norm of a (quasi-)normed space $X$, and $X'$ as its anti-dual space. For a measure space $\Omega$ and a Banach space $X$ we denote by $\L^0(\Omega;X)$ the space of all strongly measurable functions with the usual identification (that is, functions equal almost everywhere are identified). Moreover, for $0<p\leq \infty$, we write
\begin{align*}
    \L^p(\Omega; X) &= \bigg\{ f\in \L^0(\Omega;X): \|f\|_{\L^p} = \bigg(\int\limits_{\Omega} \|f(x)\|_X^p\,\d \mu\bigg)^\frac{1}{p}<\infty \bigg\},
\end{align*}
with the usual modification if $p=\infty$. For $p\in (1 ,\infty)$ we denote by $p'\in (1,\infty)$ the unique number such that $1 = \frac{1}{p} + \frac{1}{p'}$. Moreover, for $p\in(0,1]$, put $p'\coloneqq \infty$. Finally, if $p=\infty$, then set $p'\coloneqq 1$. If $X$ is a quasi-Banach space, we also define for $0<p\leq \infty$ and $\beta\in \IR$ the weighted vector-valued sequence spaces $\ell^p_\beta(\IZ;X)$ as follows:
\begin{align*}
    \ell^p_\beta(\IZ;X)\coloneqq \Big\{ x: x=(x_k)_{k\in \IZ}\ \text{with}\ x_k\in X,\, \|x\|_{\ell^p_\beta(\IZ;X)} \coloneqq \Big(\sum\limits_{k\in\IZ} 2^{-k\beta p}\|x_k\|_X^p\Big)^\frac{1}{p}<\infty \Big\},
\end{align*}
again with the usual modification if $p=\infty$.
For two real numbers $a,b$, we write $a\lesssim b$ if there exists a constant $C>0$, whose precise value may vary from line to line but depends at most on the structural constants, such that $a\leq Cb$. Conversely, we write $a\gtrsim b$ if $Ca\geq b$. Furthermore, we write $a\simeq b$ if both $a\lesssim b$ and $a\gtrsim b$ hold.

\section{Characterization of Besov--Triebel--Lizorkin spaces}
\label{sec: characterizations}

\noindent In this section, we provide a continuous characterization of homogeneous Besov and Triebel--Lizorkin spaces with respect to general kernels with non-compact Fourier support. We have motivated the usage of such kernels in the introduction.

To do so, we need to introduce objects from harmonic analysis which will appear in the formulation of the characterization theorems as well as in their proofs.
Let $\cS(\IR^d)$ denote the Schwartz space and $\cS'(\IR^d)$ its topological dual space, the space of tempered distributions. Furthermore, for $N\in\IN_0 \cup \{\infty\}$ we define
\begin{align*}
    \cS_N(\IR^d) \coloneqq \{ f \in \cS(\IR^d): [\D^\alpha \cF(f)](0) = 0, \quad \text{for all $\alpha\in \IN_0^d$ with $|\alpha|\leq N$}\},
\end{align*}
where $\cF$ denotes the Fourier transform and $\D^\alpha = \partial_1^{\alpha_1} \dots \partial_d^{\alpha_d}$. In this context, we also define $\cS_{-1}(\IR^d) \coloneqq \cS(\IR^d)$. Moreover, let $\cS_N'(\IR^d)$ denote the topological dual space of $\cS_N(\IR^d)$, where $\cS_N(\IR^d)$ is equipped with the subspace topology of $\cS(\IR^d)$. It can be identified with $\cS'(\IR^d)/\cP_N(\IR^d)$, where $\cP_N(\IR^d)$ is the space of all polynomials on $\IR^d$ with complex coefficients of degree at most $N$; see \cite[Chap.\@ 5]{Triebel1}. If $\varphi$ and $\psi$ are integrable functions, then their convolution $\psi\ast \varphi$ is defined as
\begin{align*}
    (\psi\ast \varphi)(x) = \int\limits_{\IR^d} \psi(x-y)\varphi(y)\,\d y \quad (x\in\IR^d).
\end{align*}
If both $\psi$ and $\varphi$ belong to $\cS(\IR^d)$, then their convolution also belongs to $\cS(\IR^d)$. Furthermore, the convolution can be generalized to $(\psi, f ) \in \cS_N(\IR^d)\times \cS'_N(\IR^d)$ via $(\psi\ast f)(x) = f(\psi(x-\cdot))$, which is well-defined in $\IR^d$ and has at most polynomial growth.

Finally, we introduce the definition of the Peetre maximal function for dilations. For $\Psi\in \cS_N(\IR^d)$ and $t>0$ define $\Psi_t\in \cS_N(\IR^d)$ by $\Psi_t(\cdot) \coloneqq t^{-d}\Psi(\cdot/t)$. Then, for $f\in \cS'_N(\IR^d)$ and $t>0$, define $\delta_t(f)\in \cS'_N(\IR^d)$ by $\delta_t(f)(\Psi) \coloneqq f(\Psi_t)$ for $\Psi\in \cS_N(\IR^d)$  (to fix ideas, for good $f$, then $\delta_t(f)(\Psi)$ is just the integral $\int_{\IR^d} f(x) \Psi_t(x)\, \d x =\int_{\IR^d} f(tx) \Psi(x)\, \d x$). Moreover, we define for $a>0$ and $t>0$ the Peetre maximal function as
\begin{align}
    \label{eq:peetre_mf}
    (\Psi_t^*f)_a(x) = \sup\limits_{y\in\IR^d} \frac{|(\Psi_t\ast f) (x+y)|}{\Big(1+\frac{|y|}{t} \Big)^a} \quad (x\in \IR^d).
\end{align}
They will appear as a technical tool in the characterization of Besov--Triebel--Lizorkin spaces. The latter will be introduced next. For further properties of these spaces, we refer to \cite[Sec.\@ 5.1]{Triebel1}.

\begin{definition}
    Let $(\varphi_k)_{k\in\IZ}\subset \cS(\IR^d)$ be such that
    \begin{enumerate}
        \item $\varphi_k(x) = \varphi_0(2^{-k}x)$ for every $x\in\IR^d$ and $k\in\IZ$,

        \item $\supp \varphi_0 \subset \{x\in\IR^d: 2^{-1}\leq |x|\leq 2\}$,

        \item $\sum\limits_{k\in\IZ} \varphi_k(x) = 1$ for every $x\in\IR^d\setminus \{0\}$,
    \end{enumerate}
    and set $\Phi_k \coloneqq \mathcal{F}^{-1}(\varphi_k)\in \cS_\infty(\IR^d)$. For $0<p,q\leq \infty$ and $\beta\in \IR$ we define the homogeneous Besov space as
    \begin{align*}
        \dot\B^{\beta}_{p,q}(\IR^d) \coloneqq \Big\{ f\in \cS_\infty'(\IR^d): \|f\|_{\dot\B^{\beta}_{p,q}} \coloneqq \Big(\sum\limits_{k\in\IZ} 2^{k\beta q} \|\Phi_k\ast f\|_{\L^p}^q \Big)^\frac{1}{q}<\infty \Big\}.
    \end{align*}
    If in addition $0<p<\infty$, we define the homogeneous Triebel--Lizorkin spaces as
    \begin{align*}
        \dot\F^{\beta}_{p,q}(\IR^d) \coloneqq \Big\{ f\in \cS_\infty'(\IR^d): \|f\|_{\dot\F^{\beta}_{p,q}} \coloneqq \Big\|\Big(\sum\limits_{k\in\IZ} 2^{k\beta q} |(\Phi_k\ast f)(\cdot)|^q \Big)^\frac{1}{q} \Big\|_{\L^p}<\infty \Big\}.
    \end{align*}
\end{definition}
We would like to draw attention to the fact that the underlying building blocks $(\Phi_k)_{k\in\IZ}$ in the definition of these homogeneous spaces are a discrete family of convolution operators with a kernel that has compact support away from zero in Fourier-space. However, for applications in partial differential equations, one is interested in definitions with more general kernels. In particular, definitions with respect to a continuous family of convolution operators $(\Psi_t)_{t\in\IR}$ where the kernels decay fast enough in zero and infinity are of great interest. Characterizations for both $\dot\B^{\beta}_{p,q}(\IR^d)$ and $\dot\F^{\beta}_{p,q}(\IR^d)$ for such general kernels were established, for example, by Ullrich in \cite{Ullrich1}. There, for every Triebel--Lizorkin characterization but one, a corresponding Besov characterization  was found. The exceptional case is a characterization using tent spaces. While abstract arguments from interpolation theory allowed us to extend elements of the Besov space $\dot\B^{\beta}_{p,p}(\IR^d)$ to functions belonging to the $\Z$-spaces from the introduction, a proper characterization using $\Z$-spaces on the one hand, and the treatment of $\dot\B^{\beta}_{p,q}(\IR^d)$ for $p\neq q$ on the other hand, were still missing. The following theorem closes this gap in the literature.

    \begin{theorem}[Characterization of Besov spaces]
\label{thm: char of lifted Besov data}
    Let $\beta\in \IR$, $0<p,q,r\leq \infty$, and $R\in\IN_0\cup \{-1\}$ such that $R+1>\beta$. Furthermore, let $\Phi\in \cS(\IR^d)$ be such that
    \begin{align}
    \label{property 1}
        |\cF\Phi (\xi)|>0 \quad \text{on} \quad \Big\{\frac{\varepsilon}{2}<|\xi|<\varepsilon \Big\}
    \end{align}
    for some $\varepsilon>0$, and
    \begin{align}
    \label{property 2}
        [\D^\alpha \cF\Phi] (0) = 0 \quad \text{for all $\alpha\in \IN_0^d$ with $|\alpha|\leq R$.}
    \end{align}
    Then, the space $\dot \B^\beta_{p,q}(\IR^d)$ can be characterized as follows (with the usual modifications if $q$ and/or $r$ are infinite):
    \begin{align*}
        \dot \B^\beta_{p,q}(\IR^d) = \bigg\{f\in \cS'_R(\IR^d):\bigg(\int\limits_0^\infty \bigg\|\bigg(\fint\limits_{\frac{t}{2}}^t\fint\limits_{B(\cdot,t)} |s^{-\beta}\Phi_s\ast f(z)|^r\,\d z \d s\bigg)^\frac{1}{r}\bigg\|_{\L^p}^q\,\frac{\d t}{t}\bigg)^\frac{1}{q}<\infty \bigg\}.
    \end{align*}
\end{theorem}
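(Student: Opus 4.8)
The plan is to prove the set equality by establishing the two inclusions separately, in both cases passing through Ullrich's maximal-function characterizations of $\dot\B^\beta_{p,q}$. Recall from \cite{Ullrich1} that, under hypotheses \eqref{property 1} and \eqref{property 2}, one has for suitable $a$ (large enough depending on $p,q,d$)
\begin{align*}
    \|f\|_{\dot\B^\beta_{p,q}} \simeq \bigg(\int_0^\infty \big\| t^{-\beta}(\Phi_t\ast f)\big\|_{\L^p}^q\,\frac{\d t}{t}\bigg)^{1/q} \simeq \bigg(\int_0^\infty \big\| t^{-\beta}(\Phi_t^*f)_a\big\|_{\L^p}^q\,\frac{\d t}{t}\bigg)^{1/q},
\end{align*}
where the first equivalence is the continuous (non-maximal) characterization and the second upgrades it to the Peetre maximal function \eqref{eq:peetre_mf}. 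The easy inclusion is ``$\dot\B^\beta_{p,q}\subseteq\Z$-side''. Here I would simply bound the Whitney average pointwise by the Peetre maximal function: for $(s,z)$ with $s\in(t/2,t)$ and $z\in B(x,t)$, writing $z=x+y$ with $|y|<t\le 2s$, we get $|\Phi_s\ast f(z)|\le (1+|y|/s)^a (\Phi_s^*f)_a(x)\lesssim (\Phi_s^*f)_a(x)$. Since $s^{-\beta}\simeq t^{-\beta}$ on the Whitney interval, the inner average over $W(t,x)$ is $\lesssim \sup_{s\in(t/2,t)} s^{-\beta}(\Phi_s^*f)_a(x)$; then one controls the $\L^p$ norm and finally the outer $\L^q(\d t/t)$ integral, splitting $(0,\infty)$ into dyadic pieces and using that $\sup_{s\in(t/2,t)}$ over $t\in[2^k,2^{k+1}]$ is comparable to $\sup$ over $s\in[2^{k-1},2^{k+1}]$, so no logarithmic loss occurs. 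This gives $\|\cdot\|_{\Z}\lesssim\|f\|_{\dot\B^\beta_{p,q}}$.

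The harder inclusion is the converse: given $f$ in the right-hand side, show $f\in\dot\B^\beta_{p,q}$ with control of the norm. The issue, as the authors flag in the roadmap, is that we must \emph{dominate the pointwise maximal function by averages} — i.e.\ reconstruct $\|t^{-\beta}(\Phi_t^*f)_a\|_{\L^p}$ (or just the non-maximal $\|t^{-\beta}\Phi_t\ast f\|_{\L^p}$) from the Whitney-averaged quantities. The standard device is a reproducing/convolution identity: choose an auxiliary $\widetilde\Phi\in\cS(\IR^d)$ with appropriate vanishing moments and Fourier support such that $\int_0^\infty \widetilde\Phi_s \ast \Phi_s\,\frac{\d s}{s} = \delta_0$ (a Calderón reproducing formula on $\cS'_R$), so that $\Phi_t\ast f = \int_0^\infty \Phi_t\ast\widetilde\Phi_s\ast(\Phi_s\ast f)\,\frac{\d s}{s}$. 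The kernel $\Phi_t\ast\widetilde\Phi_s$ has good decay and a gain of $\min(t/s,s/t)^{M}$ for $M$ as large as we like (by the moment conditions), which lets one sum/integrate over the scales $s$ and absorb the mismatch between the scale $t$ and the Whitney scale. The Peetre maximal function at scale $t$ is then estimated by a superposition, over $s$, of (shifted, dilated) maximal averages of $|\Phi_s\ast f|$ at scale $s$; crucially, these shifted maximal averages are controlled by the Hardy--Littlewood maximal operator $M$ applied to $\big(\fint_{B(\cdot,s)}|\Phi_s\ast f|^{r_0}\big)^{1/r_0}$ for a small exponent $0<r_0<\min(p,q,r)$ (the sub-unit exponent is what makes the Peetre sup summable; this is the classical Fefferman--Stein trick underlying Ullrich's argument). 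One then replaces $\fint_{B(\cdot,s)}$ by the full Whitney average $\fint_{s'/2}^{s'}\fint_{B(\cdot,s')}$ — harmless since $s^{-\beta}|\Phi_s\ast f|$ varies slowly in the scale variable, which itself requires a short argument comparing $\Phi_s\ast f$ at nearby scales via $\partial_s(\Phi_s\ast f)$ or via another reproducing formula. Finally, apply the $\L^{p/r_0}$ (vector-valued, if needed $\ell^{q/r_0}$) boundedness of $M$, undo the $r_0$-th powers, and sum the geometric series in the scale-mismatch parameter to land on the $\Z^{p,q,r}_\beta$ quasi-norm. Throughout one works first with $f\in\cS_R(\IR^d)$ (or a dense subclass) to justify the manipulations, then passes to the limit; one also must check the a priori matter that any $f$ on the right-hand side indeed defines an element of $\cS'_R(\IR^d)$, which follows from the polynomial growth bounds for $\Phi_s\ast f$ recorded before the statement.

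I expect the main obstacle to be the bookkeeping in this second inclusion: handling three exponents $p,q,r$ simultaneously (in particular choosing $r_0<\min(p,q,r)$ and carrying the $\ell^{q/r_0}(\L^{p/r_0})$ maximal estimate), keeping the constants uniform in $t$, and making the two ``slow variation in the scale'' comparisons (between $B(\cdot,t)$-averages and Whitney averages, and between $\Phi_s\ast f$ and $\Phi_{s'}\ast f$ for $s\simeq s'$) rigorous without losing the moment gain. Relatedly, one must be careful that the reproducing formula is valid on $\cS'_R$ — i.e.\ modulo polynomials of degree $\le R$ — which is exactly why the hypothesis $R+1>\beta$ enters (it guarantees that the polynomial ambiguity does not affect the Besov (quasi-)norm, since $\dot\B^\beta_{p,q}\hookrightarrow\cS'_R$ precisely under this condition). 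None of these steps is conceptually new — they are the continuous-parameter analogues of Triebel's and Ullrich's techniques — but assembling them into the four-parameter Whitney-average setting is where the ``lengthy and technical'' calculation referred to in the roadmap resides. For the endpoint cases $q=\infty$ and/or $r=\infty$ the same scheme works with suprema replacing integrals, and $p=\infty$ (allowed only for Besov, and excluded from the Triebel companion statement) is handled by the standard BMO-type modifications to the maximal estimate.
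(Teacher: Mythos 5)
Your plan aligns with the paper's at the macro level --- both inclusions pass through Ullrich's maximal-function framework, the sub-unit exponent $\gamma<\min\{1,p,q,r\}$ is the engine, and the essential difficulty is converting between pointwise Peetre maximal functions and Whitney averages --- but you gloss over the two places where the paper's argument is delicate, and your suggested execution of the hard inclusion is both heavier and different from the paper's.

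For the inclusion $\dot\B^\beta_{p,q}\subseteq\Z$-side, you reduce to bounding $\int_0^\infty t^{-q\beta}\big\|\sup_{t/2<s\leq t}(\Phi_s^*f)_a\big\|_{\L^p}^q\,\frac{\d t}{t}$ and assert the supremum is absorbed by dyadic decomposition ``with no logarithmic loss.'' That is exactly the non-trivial step: the sup sits inside the $\L^p$ norm, and Peetre maximal functions $(\Phi_s^*f)_a$ at nearby but distinct scales $s$ are \emph{not} pointwise comparable for a general $\Phi$. The paper resolves this by establishing the dilation identity $(\Phi_t^*f)_a(x)=(\Phi_1^*\delta_t(f))_a(x/t)$ and using it to swap the discrete and continuous scale parameters in Ullrich's inequality (2.89), which gives a pointwise bound on $\sup_{\lambda/2<t\le\lambda}(\Phi^*_{2^{-l}t}f)_a(x)$ by $\sum_k 2^{-|k-l|\delta}2^{k\beta}(\Phi^*_{2^{-k}}f)_a(x)$ \emph{before} any norm is taken. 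For the converse inclusion, the paper does not re-derive a Calder\'on reproducing formula, does not invoke the Hardy--Littlewood maximal operator, and needs no ``slow variation in scale'' comparison of $\Phi_s\ast f$ with $\Phi_{s'}\ast f$ (which, you should note, is itself non-trivial for a general $\Phi$). It takes Ullrich's inequality (2.66) as a black box, inserts the spatial ball average via translation invariance of the weight $(1+2^k|x-y|)^{-a\gamma}$, inserts the time average by integrating the free dilation parameter $\lambda\in[1,2]$ that is already present in Ullrich's inequalities (so the scale comparison comes for free), and then closes with scalar Young's inequality for the inner $\L^{p/\gamma}$ norm followed by discrete Young for the outer $\ell^{q/\gamma}$ norm. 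The vector-valued $\ell^{q/r_0}(\L^{p/r_0})$ Fefferman--Stein maximal estimate you propose is the right tool for the Triebel--Lizorkin companion (Theorem~\ref{thm: char of lifted Triebel data}), where $\ell^q$ is inner, but is unnecessary and heavier than needed for the Besov case.
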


\begin{remark}
    \label{rem:characterization}
    We clarify how the characterization identity has to be understood.
    \begin{itemize}
        \item In this theorem, the homogeneous space on the left-hand side is understood as a realization within
        $\cS_R'(\IR^d)$, where $R$ is as in the statement. This realization aligns with the framework presented in \cite[Sec.\@ 2.4.3]{Sawano} (see also \cites{Bourdaud,Moussai}) in the following sense. By \cite[Thm.\@ 2.31]{Sawano}, we can realize $\dot \B^\beta_{p,q}(\IR^d)$ within the space $\cS_M'(\IR^d)$, where $M=\max\{\lfloor \beta-\frac{d}{p}\rfloor,-1\}$ and $\lfloor x\rfloor$ denotes the integer part of $x\in\IR$. Because $M\leq R$ (see bullet point below), we have the embedding $\cS_M'(\IR^d)\subset \cS_R'(\IR^d)$ and thus can realize $\dot \B^\beta_{p,q}(\IR^d)$ within $\cS_R'(\IR^d)$. As a consequence, the identity in Theorem~\ref{thm: char of lifted Besov data} is understood as a set-theoretic equality in $\cS'_R(\IR^d)$ with corresponding equivalence of norms. The same conclusion applies to Triebel--Lizorkin spaces $\dot \F^\beta_{p,q}(\IR^d)$.

        \item We claim that $M\leq R$, which implies $\cS_M'(\IR^d)\subset \cS'_R(\IR^d)$. Indeed, if $M=-1$ then $M\leq R$ holds trivially. In the case $M=\lfloor \beta-\frac{d}{p}\rfloor$, assume for the sake of contradiction that $M>R$. Since $M,R\in\IZ$, this implies $R+1\leq M$ and thus
        \begin{align*}
            \beta < R+1\leq M  \leq \beta-\frac{d}{p},
        \end{align*}
        which is impossible.
    \end{itemize}

\end{remark}

\noindent Before we start with the proof of Theorem~\ref{thm: char of lifted Besov data}, we want to mention that the assumptions of the theorem are identical to those in \cite[Thm.\@ 2.9]{Ullrich1}. This allows us to use T.~Ullrich's techniques and derived identities as a black box for our proof. Nevertheless, the reader is advised to keep a copy of~\cite{Ullrich1} handy.

\begin{proof}[Proof of Theorem~\ref{thm: char of lifted Besov data}]

Denote by $X$ the space on the right-hand side in the statement to simplify notation. Taking Remark~\ref{rem:characterization} into account, we have to show the equivalence $\dot \B^\beta_{p,q}(\IR^d) = X$. To do so, we will divide the proof into two steps. The first step proves $\dot \B^\beta_{p,q}(\IR^d) \supset  X$ while the second step provides the reverse inclusion. For the rest of the proof, we fix $f \in \cS'_R(\IR^d)$. We suppose $q,r < \infty$ for notational convenience.

\noindent \textbf{Step 1:  $\dot \B^\beta_{p,q}(\IR^d) \supset  X$.}
Assume that $f\in X$ and fix $a>\frac{d}{\min\{1,p,q,r\}}$. Then we can use the inequality (2.89) of \cite{Ullrich1} in the homogeneous setting (see also the proof of \cite[Thm.\@ 2.8]{Ullrich1}), which reads as follows: for $\delta =R+1-\beta$, one has
\begin{align}
    \label{eq:wavelet_mf}
    2^{l\beta}(\Phi_{2^{-l}}^*f)_a(x) \lesssim \sum\limits_{k\in\IZ} 2^{-|k-l|\delta} 2^{k\beta} (\Phi^*_{2^{-k}t}f)_a(x)
\end{align}
for all $x\in \IR^d$, $t\in [1/2,2]$ and $l\in \IZ$. The slight enlargement of the constant $\delta$ as well as the interval of $t$ compared to \cite{Ullrich1} does not cause any difficulties. Fix $0<\gamma< \min\{1,p,q,r\}$ large enough, such that $ a\gamma >d$. Then, since $\gamma < 1$, the above identity implies
\begin{align*}
    2^{l\beta\gamma }|(\Phi_{2^{-l}}^*f)_a(x)|^\gamma \lesssim \sum\limits_{k\in\IZ} 2^{-|k-l|\delta \gamma} 2^{k\beta\gamma} |(\Phi^*_{2^{-k}t}f)_a(x)|^\gamma.
\end{align*}
Next, we use the inequality (2.66) of \cite{Ullrich1} on the right-hand side of the previous bound, to get
\begin{align}
\label{eq: BC1}
    2^{l\beta\gamma}|(\Phi_{2^{-l}}^*f)_a(x)|^\gamma \lesssim \sum\limits_{k\in\IZ} 2^{-|k-l|\delta\gamma} 2^{k\beta\gamma } \sum\limits_{m=0}^\infty 2^{-mN\gamma} 2^{(k+m)d}\int\limits_{\IR^d} \frac{|(\Phi_{2^{-(k+m)}t}\ast f)(y)|^\gamma}{(1+2^k|x-y|)^{a\gamma}}\,\d y,
\end{align}
for all $N\geq a$. Observe $2^{-m} t \leq 2$ as $m \geq 0$ and $t \leq 2$. If we take $|z|\leq 2^{-(k+m)}t$, then
\begin{align*}
    1+2^k|x+z-y|\leq 1+ 2^k|x-y|+2^k|z|\leq 1+ 2^k|x-y| + 2^{-m}t \lesssim1+ 2^k|x-y|.
\end{align*}
Hence, by translation, we have
\begin{align*}
    \int\limits_{\IR^d} \frac{|(\Phi_{2^{-(k+m)}t}\ast f)(y)|^\gamma}{(1+2^k|x-y|)^{a\gamma}}\,\d y \lesssim \int\limits_{\IR^d} \frac{|(\Phi_{2^{-(k+m)}t}\ast f)(y+z)|^\gamma}{(1+2^k|x-y|)^{a\gamma}}\,\d y.
\end{align*}
Since the left-hand side of this bound is independent of $z$, integrating both sides by
\begin{align*}
    \bigg(\fint\limits_{B(0 ,2^{-(k+m)}t)} |\cdot |^\frac{r}{\gamma} \,\d z\bigg)^\frac{\gamma}{r}
\end{align*}
and using Minkowski's integral inequality yield
\begin{align*}
    \int\limits_{\IR^d} \frac{|(\Phi_{2^{-(k+m)}t}\ast f)(y)|^\gamma}{(1+2^k|x-y|)^{a\gamma}}\,\d y \lesssim \int\limits_{\IR^d} \frac{\Big(\fint_{B(0 ,2^{-(k+m)}t)} |(\Phi_{2^{-(k+m)}t}\ast f)(y+z) |^r \,\d z\Big)^\frac{\gamma}{r}}{(1+2^k|x-y|)^{a\gamma}}\,\d y.
\end{align*}
Plugging this into \eqref{eq: BC1} and performing a translation give us
\begin{align*}
    &2^{l\beta\gamma}|(\Phi_{2^{-l}}^*f)_a(x)|^\gamma\\
    &\lesssim \sum\limits_{k\in\IZ} 2^{-|k-l|\delta\gamma} 2^{k\beta\gamma } \sum\limits_{m=0}^\infty 2^{-mN\gamma} 2^{(k+m)d}\int\limits_{\IR^d} \frac{\Big(\fint_{B(y ,2^{-(k+m)}t)} |(\Phi_{2^{-(k+m)}t}\ast f)(z) |^r \,\d z\Big)^\frac{\gamma}{r}}{(1+2^k|x-y|)^{a\gamma}}\,\d y.
\end{align*}
Lastly, for $\lambda \in [1 , 2]$ fixed but arbitrary, we integrate both sides by
\begin{align*}
    \bigg(\fint\limits_{\frac{\lambda}{2}}^\lambda  |\cdot |^\frac{r}{\gamma} \, \d t\bigg)^\frac{\gamma}{r},
\end{align*}
to get with Minkowski's integral inequality ($\gamma <r$) that
\begin{align}
\label{eq: B1}
    2^{l\beta\gamma}|(\Phi_{2^{-l}}^*f)_a(x)|^\gamma &\lesssim \sum\limits_{k\in\IZ} 2^{-|k-l|\delta\gamma} 2^{k\beta\gamma} \sum\limits_{m=0}^\infty 2^{-mN\gamma} 2^{(k+m)d}\int\limits_{\IR^d} \frac{|(\Phi^\lambda_{k+m}f)(y)|^\gamma}{(1+2^k|x-y|)^{a\gamma}}\,\d y \nonumber\\
    &=\sum\limits_{k\in\IZ} 2^{-|k-l|\delta\gamma} 2^{k\beta\gamma} \sum\limits_{m=0}^\infty 2^{-mN\gamma} 2^{md} [g_k\ast |\Phi^\lambda_{k+m}f|^\gamma](x),
\end{align}
where we define
\begin{align*}
    (\Phi^\lambda_{k+m}f)(y) \coloneqq \bigg(\fint\limits_{\frac{\lambda}{2}}^\lambda \fint\limits_{B(y,2^{-(k+m)}t)} |(\Phi_{2^{-(k+m)}t}\ast f)(z) |^r\,\d z \d t\bigg)^\frac{1}{r}
\end{align*}
as well as
\begin{align*}
    g_k(x) \coloneqq \frac{2^{kd}}{(1+2^k|x|)^{a\gamma}}.
\end{align*}
Since $a\gamma >d$, $g_k\in \L^1(\IR^d)$ with uniform $\L^1$-bound with respect to $k\in\IZ$. Next, take $\L^{\frac{p}{\gamma}}$-norms with respect to $x$ on both sides of \eqref{eq: B1}, and use
Minkowski's and Young's convolution inequalities, to obtain
\begin{align*}
   \|2^{l\beta}(\Phi_{2^{-l}}^*f)_a\|_{\L^p}^\gamma &\lesssim\sum\limits_{k\in\IZ} 2^{-|k-l|\delta\gamma} 2^{k\beta\gamma} \sum\limits_{m=0}^\infty 2^{-mN\gamma} 2^{md} \|g_k\ast|\Phi^\lambda_{k+m}f|^\gamma\|_{\L^\frac{p}{\gamma}}\\
   &\lesssim\sum\limits_{k\in\IZ} 2^{-|k-l|\delta\gamma} 2^{k\beta\gamma} \sum\limits_{m=0}^\infty 2^{-mN\gamma} 2^{md} \|\Phi^\lambda_{k+m}f\|^\gamma_{\L^p}.
\end{align*}
Now, integrate both sides by
\begin{align*}
    \bigg(\int\limits_1^2|\cdot |^\frac{q}{\gamma}\,\frac{\d \lambda}{\lambda}\bigg)^\frac{\gamma}{q}
\end{align*}
and use Minkowski's inequality once more, to get
\begin{align}
    \label{eq:hk_discrete_convolution}
   \|2^{l\beta}(\Phi_{2^{-l}}^*f)_a\|_{\L^p}^\gamma &\lesssim\sum\limits_{k\in\IZ} 2^{-|k-l|\delta\gamma} 2^{k\beta\gamma} \sum\limits_{m=0}^\infty 2^{-mN\gamma} 2^{md} \bigg(\int\limits_1^2\|\Phi^\lambda_{k+m}f\|^q_{\L^p}\,\frac{\d \lambda}{\lambda}\bigg)^\frac{\gamma}{q} \\
   &=\sum\limits_{k\in\IZ} 2^{-|k-l|\delta\gamma} h_k, \nonumber
\end{align}
where we define
\begin{align*}
    h_k \coloneqq 2^{k\beta\gamma} \sum\limits_{m=0}^\infty 2^{-mN\gamma} 2^{md} \bigg(\int\limits_1^2\|\Phi^\lambda_{k+m}f\|^q_{\L^p}\,\frac{\d \lambda}{\lambda}\bigg)^\frac{\gamma}{q}.
\end{align*}
Interpreting the right-hand side of~\eqref{eq:hk_discrete_convolution} as a discrete convolution, we take $\ell^\frac{q}{\gamma}$-norms on both sides and apply Young's (discrete) convolution inequality twice, as well as an index shift, to get
\begin{align*}
    \bigg(\sum\limits_{l\in\IZ} \|2^{l\beta}(\Phi_{2^{-l}}^*f)_a\|_{\L^p}^q\bigg)^\frac{\gamma}{q} &\lesssim \Big\|\sum\limits_{k\in\IZ} 2^{-|k-l|\delta\gamma} h_k \Big\|_{\ell^{\frac{q}{\gamma}}}\\
    &\lesssim \Big\|2^{k\beta\gamma}\sum\limits_{m=0}^\infty 2^{-mN\gamma} 2^{md} \bigg(\int\limits_1^2\|\Phi^\lambda_{k+m}f\|^q_{\L^p}\,\frac{\d \lambda}{\lambda}\bigg)^\frac{\gamma}{q} \Big\|_{\ell^{\frac{q}{\gamma}}}\\
    &= \bigg\| \sum\limits_{m=k}^\infty 2^{-(m-k)(N\gamma-d+\beta\gamma)}  \bigg(2^{m\beta q}\int\limits_1^2\|\Phi^\lambda_{m}f\|^q_{\L^p}\,\frac{\d \lambda}{\lambda}\bigg)^\frac{\gamma}{q} \bigg\|_{\ell^{\frac{q}{\gamma}}}\\
    &\lesssim \bigg\| \bigg(2^{k\beta q}\int\limits_1^2\|\Phi^\lambda_{k}f\|^q_{\L^p}\,\frac{\d \lambda}{\lambda}\bigg)^\frac{\gamma}{q} \bigg\|_{\ell^{\frac{q}{\gamma}}}\\
    &=\bigg\| \bigg(2^{k\beta q}\int\limits_1^2\|\Phi^\lambda_{k}f\|^q_{\L^p}\,\frac{\d \lambda}{\lambda}\bigg)^\frac{1}{q} \bigg\|^\gamma_{\ell^{q}},
\end{align*}
where we have chosen $N\in\IN$ large enough such that $N\gamma-d+\beta\gamma>0$. Taking the $\gamma$-root of the last estimate yields
\begin{align*}
    \bigg(\sum\limits_{l\in\IZ} \|2^{l\beta}(\Phi_{2^{-l}}^*f)_a\|_{\L^p}^q\bigg)^\frac{1}{q} \lesssim \bigg\| \bigg(2^{k\beta q}\int\limits_1^2\|\Phi^\lambda_{k}f\|^q_{\L^p}\,\frac{\d \lambda}{\lambda}\bigg)^\frac{1}{q} \bigg\|_{\ell^{q}}.
\end{align*}
The left-hand side in this bound is equivalent to $\|f\|_{\dot \B^\beta_{p,q}}$; see \cite[Thm.\@ 2.9]{Ullrich1}. For the right-hand side, we calculate
\begin{align*}
    \bigg(\sum\limits_{k\in\IZ} \int\limits_1^2 2^{k\beta q}\|\Phi^\lambda_{k}f\|_{\L^p}^q\,\frac{\d \lambda}{\lambda}\bigg)^\frac{1}{q}
    &=\bigg(\sum\limits_{k\in\IZ} \int\limits_1^2 2^{k\beta q}\bigg\|\bigg(\fint\limits_{\frac{\lambda}{2}}^\lambda \fint\limits_{B(\cdot,2^{-k}t)} |(\Phi_{2^{-k}t}\ast f)(z) |^r \,\d z \d t\bigg)^\frac{1}{r} \bigg\|_{\L^p}^q\,\frac{\d \lambda}{\lambda}\bigg)^\frac{1}{q}.
\intertext{First, substitute $2^{-k}t = s$, so that}
    &=\bigg(\sum\limits_{k\in\IZ} \int\limits_1^2 2^{k\beta q}\bigg\|\bigg(\fint\limits_{2^{-(k+1)}{\lambda}}^{2^{-k}\lambda} \fint\limits_{B(\cdot,s)} |(\Phi_{s}\ast f)(z) |^r \,\d z \d s \bigg)^\frac{1}{r} \bigg\|_{\L^p}^q\,\frac{\d \lambda}{\lambda}\bigg)^\frac{1}{q},
\intertext{and finally substitute $2^{-k}\lambda = t$ in the outer integral to get}
    &=\bigg(\sum\limits_{k\in\IZ} \int\limits_{2^{-k}}^{2^{-(k-1)}} 2^{k\beta q}\bigg\|\bigg(\fint\limits_{\frac{t}{2}}^{t} \fint\limits_{B(\cdot,s)} |(\Phi_{s}\ast f)(z) |^r \,\d z \d s \bigg)^\frac{1}{r} \bigg\|_{\L^p}^q\,\frac{\d t}{t}\bigg)^\frac{1}{q}\\
    &\lesssim \bigg( \int\limits_{0}^{\infty}  \bigg\|\bigg(\fint\limits_{\frac{t}{2}}^{t} \fint\limits_{B(\cdot,t)} |s^{-\beta}(\Phi_{s}\ast f)(z) |^r \,\d z \d s \bigg)^\frac{1}{r} \bigg\|_{\L^p}^q\,\frac{\d t}{t}\bigg)^\frac{1}{q},
\end{align*}
where we used $2^{-k}\simeq t$ and $t \simeq s$ in the last step.
Hence, we have the inequality
\begin{align*}
    \|f\|_{\dot\B^\beta_{p,q}}\lesssim \bigg( \int\limits_{0}^{\infty}\bigg\|\bigg(\fint\limits_{\frac{t}{2}}^{t} \fint\limits_{B(\cdot,t)} |s^{-\beta}(\Phi_{s}\ast f)(z) |^r \,\d z \d s \bigg)^\frac{1}{r} \bigg\|_{\L^p}^q\,\frac{\d t}{t}\bigg)^\frac{1}{q},
\end{align*}
which shows the first inclusion.\\

\noindent \textbf{Step 2: $\dot \B^\beta_{p,q}(\IR^d)\subset X$.}
To tackle this inclusion, recall the Peetre maximal function from~\eqref{eq:peetre_mf} and fix $t>0$ and $x\in \IR^d$. Using a substitution and introducing a supremum, we calculate
\begin{align*}
     \bigg(\fint\limits_{\frac{t}{2}}^{t} \fint\limits_{B(x,t)} |(\Phi_{s}\ast f)(z) |^r \,\d z \d s \bigg)^\frac{1}{r}&= \bigg(\fint\limits_{\frac{t}{2}}^{t} \fint\limits_{B(0,t)} |(\Phi_{s}\ast f)(x+z) |^r \,\d z \d s \bigg)^\frac{1}{r}\\
     & =   \bigg(\fint\limits_{\frac{t}{2}}^{t} \fint\limits_{B(0,t)} \Big(\frac{|(\Phi_{s}\ast f)(x+ z) |}{(1+\frac{|z|}{s})^a}\Big)^r \Big(1+\frac{|z|}{s}\Big)^{ar}\,\d z \d s \bigg)^\frac{1}{r}\\
     & \leq \bigg(\fint\limits_{\frac{t}{2}}^{t} \fint\limits_{B(0,t)} \Big(\sup\limits_{\tilde z\in\IR^d} \frac{|(\Phi_{s}\ast f)(x+\tilde z) |}{(1+\frac{|\tilde z|}{s})^a}\Big)^r \Big(1+\frac{|z|}{s}\Big)^{ar}\,\d z \d s \bigg)^\frac{1}{r}
\end{align*}
Note that $1+\frac{|z|}{s} \simeq 1$ on the Whitney box. Hence, introducing another supremum in $s$, we arrive at
\begin{align*}
     \bigg(\fint\limits_{\frac{t}{2}}^{t} \fint\limits_{B(x,t)} |(\Phi_{s}\ast f)(z) |^r \,\d z \d s \bigg)^\frac{1}{r} & \leq \bigg(\fint\limits_{\frac{t}{2}}^{t} \fint\limits_{B(0,t)} \Big(\sup\limits_{\tilde z\in\IR^d} \frac{|(\Phi_{s}\ast f)(x+\tilde z) |}{(1+\frac{|\tilde z|}{s})^a}\Big)^r \Big(1+\frac{|z|}{s}\Big)^{ar}\,\d z \d s \bigg)^\frac{1}{r}\\
     &\lesssim \sup\limits_{\frac{t}{2}<s \leq t}\sup\limits_{\tilde z\in\IR^d} \frac{|(\Phi_{s}\ast f)(x+\tilde z) |}{(1+\frac{|\tilde z|}{s})^a}=\sup\limits_{\frac{t}{2}<s \leq t} (\Phi_s^*f)_a(x).
\end{align*}
Thus, for $f\in \dot \B^\beta_{p,q}(\IR^d)$ it suffices to show
\begin{align}
\label{eq: B2}
     \bigg( \int\limits_{0}^{\infty} t^{-q\beta}\bigg\|\sup\limits_{\frac{t}{2}<s \leq t} (\Phi_s^*f)_a\bigg\|_{\L^p}^q\,\frac{\d t}{t}\bigg)^\frac{1}{q}\lesssim \|f\|_{\dot \B^\beta_{p,q}}.
\end{align}
As in Step~1, we take the inequality~\eqref{eq:wavelet_mf} as the starting point. In this bound, the discrete scale $2^{-l}$ is used on the left-hand side, and the \enquote{continuous} scale $2^{-k} t$ on the right-hand side. We claim that these roles can be swapped. Indeed, for $t > 0$, recall $\Phi_t(\cdot) = t^{-d} \Phi(\cdot/t)\in \cS_R(\IR^d)$ and $\delta_t(f)\in \cS'_R(\IR^d)$ defined by $\delta_t(f)(\varphi) = f(\varphi_t)$ for $\varphi\in \cS_R(\IR^d)$. From the elementary identity $(\Phi_t \ast f)(x) = (\Phi \ast \delta_t(f))(x/t)$ it follows
\begin{align}
    (\Phi_{st}^* f)_a(x) = (\Phi^*_s \delta_t(f))_a(x/t) \quad (s>0,x\in \IR^d).
\end{align}
Using this transformation twice in~\eqref{eq:wavelet_mf} applied with $t$ replaced by $t^{-1}$, keeping $\Phi_{2^{-l} t} = (\Phi_{2^{-l}})_t$ in mind, we find
\begin{align*}
    2^{l\beta}(\Phi_{2^{-l} t}^*f)_a(x) &= 2^{l\beta}(\Phi_{2^{-l}}^* \delta_t(f))_a(x/t) \\
    &\lesssim \sum\limits_{k\in\IZ} 2^{-|k-l|\delta} 2^{k\beta} (\Phi^*_{2^{-k} t^{-1}} \delta_t(f))_a(x/t) \\
    &= \sum\limits_{k\in\IZ} 2^{-|k-l|\delta} 2^{k\beta} (\Phi^*_{2^{-k}} f)_a(x)
\end{align*}
as desired.
Since the right-hand side of this inequality is independent of $t\in[1/2,2]$, we can fix $\lambda \in [1,2]$ and estimate
\begin{align*}
    2^{l\beta}\sup\limits_{\frac{\lambda}{2}<t\leq \lambda}|(\Phi^*_{2^{-l}t}f)_a(x) |\lesssim \sum\limits_{k\in\IZ} 2^{-|k-l|\delta} 2^{k\beta}(\Phi^*_{2^{-k}}f)_a(x).
\end{align*}
Because $\gamma < 1$, we also get
\begin{align}
\label{eq: identity continous less diskrete}
2^{l\beta\gamma}\Big(\sup\limits_{\frac{\lambda}{2}<t\leq \lambda}|(\Phi^*_{2^{-l}t}f)_a(x) |\Big)^\gamma\lesssim \sum\limits_{k\in\IZ} 2^{-|k-l|\delta\gamma} 2^{k\beta\gamma}|(\Phi^*_{2^{-k}}f)_a(x)|^\gamma.
\end{align}
Take $\L^\frac{p}{\gamma}$-norms on both sides of \eqref{eq: identity continous less diskrete} and use Minkowski's inequality to get
\begin{align*}
    2^{l\beta\gamma}\big\|\sup\limits_{\frac{\lambda}{2}<t\leq \lambda}|(\Phi^*_{2^{-l}t}f)_a| \big\|^\gamma_{\L^p} \lesssim \sum\limits_{k\in\IZ} 2^{-|k-l|\delta\gamma} 2^{k\beta\gamma}\|(\Phi^*_{2^{-k}}f)_a\|^\gamma_{\L^p}.
\end{align*}
Next, we integrate both sides by
\begin{align*}
    \bigg(\int\limits_1^2 |\cdot|^\frac{q}{\gamma}\,\frac{\d \lambda}{\lambda}\bigg)^\frac{\gamma}{q}
\end{align*}
and take $\ell^\frac{q}{\gamma}$-norms. With Young's (discrete) convolution inequality, this gives
\begin{align*}
    \bigg(\sum\limits_{l\in\IZ} \int\limits_1^2 2^{l\beta q}\big\|\sup\limits_{\frac{\lambda}{2}<t\leq \lambda}|(\Phi^*_{2^{-l}t}f)_a| \big\|_{\L^p}^q \,\frac{\d \lambda}{\lambda} \bigg)^\frac{\gamma}{q} &\lesssim \Big\|\Big(\sum\limits_{k\in\IZ} 2^{-|k-l|\delta\gamma} 2^{k\beta\gamma}\|(\Phi^*_{2^{-k}}f)_a\|^\gamma_{\L^p}\Big)_l\Big\|_{\ell^\frac{q}{\gamma}}\\
    &\lesssim \bigg(\sum\limits_{k\in\IZ} 2^{k\beta q}\|(\Phi^*_{2^{-k}}f)_a\|_{\L^p}^q\bigg)^\frac{\gamma}{q}\\
    &\lesssim \|f\|^\gamma_{\dot \B^\beta_{p,q}},
\end{align*}
where the last line follows from \cite[Theorem 2.9]{Ullrich1}. Finally, {applying several substitutions and using $t \simeq 2^{-l}$,} we have for the left-hand side of the last bound
\begin{align*}
    \bigg(\sum\limits_{l\in\IZ} \int\limits_1^2 2^{l\beta q}\big\|\sup\limits_{\frac{\lambda}{2}<t\leq \lambda}|(\Phi^*_{2^{-l}t}f)_a| \big\|_{\L^p}^q \,\frac{\d \lambda}{\lambda} \bigg)^\frac{1}{q} &= \bigg(\sum\limits_{l\in\IZ} \int\limits_1^2 2^{l\beta q}\big\|\sup\limits_{\frac{2^{-l}\lambda}{2}<s\leq 2^{-l}\lambda}|(\Phi^*_{s}f)_a| \big\|_{\L^p}^q \,\frac{\d \lambda}{\lambda} \bigg)^\frac{1}{q}\\
    &= \bigg(\sum\limits_{l\in\IZ} \int\limits_{2^{-l}}^{2^{-(l-1)}} 2^{l\beta q}\big\|\sup\limits_{\frac{t}{2}<s\leq t}|(\Phi^*_{s}f)_a| \big\|_{\L^p}^q \,\frac{\d t}{t} \bigg)^\frac{1}{q}\\
    &\simeq \bigg(\sum\limits_{l\in\IZ} \int\limits_{2^{-l}}^{2^{-(l-1)}} t^{-\beta q}\big\|\sup\limits_{\frac{t}{2}<s\leq t}|(\Phi^*_{s}f)_a| \big\|_{\L^p}^q \,\frac{\d t}{t} \bigg)^\frac{1}{q}\\
    &\simeq \bigg( \int\limits_0^\infty t^{-\beta q}\big\|\sup\limits_{\frac{t}{2}<s\leq t}|(\Phi^*_{s}f)_a| \big\|_{\L^p}^q \,\frac{\d t}{t} \bigg)^\frac{1}{q},
\end{align*}
which completes \eqref{eq: B2}, and hence the reverse inequality.
\end{proof}

We also present a similar characterization for Triebel--Lizorkin spaces. The proof is essentially as before with some minor changes. The meaning of the word \enquote{characterization} in the statement was clarified in Remark~\ref{rem:characterization}.

\begin{theorem}
\label{thm: char of lifted Triebel data}
    Let $\beta\in \IR$, $0<p<\infty$, $0<q,r\leq \infty$, and let $R\in\IN_0\cup\{-1\}$ such that $R+1>\beta$. Moreover, let $\Phi\in \cS(\IR^d)$ fulfill properties \eqref{property 1} and \eqref{property 2} for this choice of $R$ and some $\varepsilon>0$.
    Then, the space $\dot \F^\beta_{p,q}$ can be characterized as follows (with the usual modifications of $q$ and/or $r$ is infinite):
    \begin{align*}
        \dot \F^\beta_{p,q}(\IR^d) = \bigg\{ f\in \cS_R'(\IR^d): \bigg\| \bigg(\int\limits_0^\infty \bigg(\fint\limits_{\frac{t}{2}}^t\fint\limits_{B(\cdot,t)} |s^{-\beta}\Phi_s\ast f(z)|^r\,\d z \d s\bigg)^\frac{q}{r}\,\frac{\d t}{t}\bigg)^\frac{1}{q}\bigg\|_{\L^p}<\infty \bigg\}.
    \end{align*}
\end{theorem}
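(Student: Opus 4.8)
The plan is to mirror the proof of Theorem~\ref{thm: char of lifted Besov data}, adapting it to the Triebel--Lizorkin setting where the roles of the spatial $\L^p$-norm and the $q$-integration (now inside the norm) are swapped. Write $Y$ for the space on the right-hand side. As before, fix $f\in\cS'_R(\IR^d)$ and, for notational convenience, assume $q,r<\infty$. The proof splits into the two inclusions $\dot\F^\beta_{p,q}(\IR^d)\subset Y$ and $\dot\F^\beta_{p,q}(\IR^d)\supset Y$.

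\textbf{Inclusion $Y\subset\dot\F^\beta_{p,q}(\IR^d)$.} I would run the same chain of estimates as in Step~1 of the previous proof up through~\eqref{eq: B1}, since everything there is pointwise in $x$. Fix $0<\gamma<\min\{1,p,q,r\}$ with $a\gamma>d$, so that $g_k(x)=2^{kd}(1+2^k|x|)^{-a\gamma}$ has uniform $\L^1$-norm. The key difference is the order of norms: starting from~\eqref{eq: B1}, I first take $\ell^{q/\gamma}$-norms in $l$ (using Young's discrete convolution inequality on $\sum_k 2^{-|k-l|\delta\gamma}(\cdots)$), then $\ell^{q/\gamma}$ in $m$ via the geometric factor $2^{-mN\gamma}2^{md}$ with $N$ chosen so that $N\gamma-d+\beta\gamma>0$, obtaining pointwise in $x$ a bound of the form
\begin{align*}
    \Big(\sum_{l\in\IZ} 2^{l\beta q}|(\Phi_{2^{-l}}^*f)_a(x)|^q\Big)^{\gamma/q}\lesssim \Big\|\Big(2^{k\beta\gamma}\big[g_k\ast |\Phi^\lambda_{k}f|^\gamma\big](x)\Big)_k\Big\|_{\ell^{q/\gamma}}.
\end{align*}
Then I take $\L^{p/\gamma}$-norms in $x$ and apply the vector-valued Fefferman--Stein maximal inequality (or, since $g_k$ is an $\L^1$-dilation, the vector-valued Young/maximal estimate $\|(g_k\ast h_k)_k\|_{\L^{p/\gamma}(\ell^{q/\gamma})}\lesssim\|(h_k)_k\|_{\L^{p/\gamma}(\ell^{q/\gamma})}$, valid because $p/\gamma,q/\gamma>1$) to move the convolutions out. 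After the $\int_1^2\,\d\lambda/\lambda$ step, and recalling $\|f\|_{\dot\F^\beta_{p,q}}$ is equivalent to the Peetre-maximal quantity $\|(2^{l\beta}(\Phi_{2^{-l}}^*f)_a)_l\|_{\L^p(\ell^q)}$ by~\cite[Thm.\@ 2.9]{Ullrich1}, the same substitutions $2^{-k}t=s$, $2^{-k}\lambda=t$ as in Step~1 convert the resulting double integral into $\|f\|_{\dot\F^\beta_{p,q}}\lesssim \|(\int_0^\infty(\fint_{t/2}^t\fint_{B(\cdot,t)}|s^{-\beta}\Phi_s\ast f|^r)^{q/r}\,\d t/t)^{1/q}\|_{\L^p}$.

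\textbf{Inclusion $\dot\F^\beta_{p,q}(\IR^d)\subset Y$.} Exactly as in Step~2 of the Besov proof, the pointwise domination of the Whitney average by the Peetre maximal function gives
\begin{align*}
    \Big(\fint_{t/2}^t\fint_{B(x,t)}|\Phi_s\ast f(z)|^r\,\d z\,\d s\Big)^{1/r}\lesssim \sup_{t/2<s\le t}(\Phi_s^*f)_a(x),
\end{align*}
so it suffices to prove $\|(\int_0^\infty t^{-q\beta}\sup_{t/2<s\le t}(\Phi_s^*f)_a(\cdot)^q\,\d t/t)^{1/q}\|_{\L^p}\lesssim\|f\|_{\dot\F^\beta_{p,q}}$. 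I would use the same scaling trick $(\Phi_t^*f)_a(x)=(\Phi_1^*\delta_t(f))_a(x/t)$ to swap the discrete and continuous scales in~\eqref{eq:wavelet_mf}, arriving at the pointwise bound~\eqref{eq: identity continous less diskrete}; then raise to power $\gamma$, apply Young's discrete convolution in $\ell^{q/\gamma}$, and finally take $\L^{p/\gamma}$-norms, invoking~\cite[Thm.\@ 2.9]{Ullrich1} for the Triebel--Lizorkin Peetre characterization. The final substitutions $2^{-l}\lambda=s$ then $2^{-l}\lambda=t$ reassemble the continuous integral, completing the inclusion.

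\textbf{Main obstacle.} The only genuine difference from the Besov argument is that in Step~1 one cannot simply pull the convolution kernels $g_k$ out by scalar Young's inequality before taking the $\ell^{q/\gamma}$-norm; one must instead invoke a \emph{vector-valued} maximal/convolution inequality in $\L^{p/\gamma}(\ell^{q/\gamma})$, which requires $p/\gamma>1$ and $q/\gamma>1$ — hence the necessity of choosing $\gamma<\min\{1,p,q,r\}$ strictly (so $\gamma<\min\{p,q\}$ in particular) and $a\gamma>d$ simultaneously, which is possible since $a>d/\min\{1,p,q,r\}$. Managing the interchange of the $m$-sum, the convolution, and the two norms in the correct order (and checking the geometric series in $m$ converges after these operations) is where care is needed; everything else is a verbatim adaptation of the substitution bookkeeping already carried out above.
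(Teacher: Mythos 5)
Your overall strategy mirrors the paper's: adapt the Besov proof, swap the roles of the $\L^p$-norm and the $q$-summation, and invoke a vector-valued maximal/convolution estimate. However, the explicit order of operations you give in Step~1 places the $\int_1^2\d\lambda/\lambda$-integration \emph{after} the $\L^{p/\gamma}$-norm, and this breaks the argument whenever $p\neq q$.

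To see why, note that your steps up to and including the vector-valued Young/Fefferman--Stein estimate produce, for each fixed $\lambda\in[1,2]$,
\begin{align*}
    \|f\|_{\dot\F^\beta_{p,q}} \lesssim \bigl\|\bigl(2^{k\beta}\Phi^\lambda_k f\bigr)_k\bigr\|_{\L^p(\ell^q)},
\end{align*}
so that your subsequent $\lambda$-integration yields
\begin{align*}
    \|f\|_{\dot\F^\beta_{p,q}} \lesssim \biggl(\int_1^2 \bigl\|\bigl(2^{k\beta}\Phi^\lambda_k f\bigr)_k\bigr\|_{\L^p(\ell^q)}^q \,\frac{\d\lambda}{\lambda}\biggr)^{1/q}.
\end{align*}
But the quantity appearing after the substitutions — the target $Y$-norm — has the $\lambda$-integral \emph{inside} the $\L^p$-norm, namely $\|(\int_1^2\sum_k 2^{k\beta q}|\Phi^\lambda_k f(\cdot)|^q\,\d\lambda/\lambda)^{1/q}\|_{\L^p}$. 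To conclude, you would need the mixed-norm inequality $\L^q_\lambda(\L^p_x(\ell^q_k))\lesssim\L^p_x(\L^q_{\lambda,k})$, which by Minkowski holds only for $p\le q$; when $p>q$ the target is the \emph{smaller} of the two and the chain does not close. The same issue arises in Step~2 with the roles of $p$ and $q$ reversed. Taken together, your ordering only proves the statement when $p=q$, in which case $\dot\F^\beta_{p,p}=\dot\B^\beta_{p,p}$ and Theorem~\ref{thm: char of lifted Besov data} already applies.

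The fix — and what the paper actually does — is to carry out the $\lambda$-integration at the \emph{pointwise} level in $x$ (and $l$), before any $\L^p$-norm is taken, precisely because in the Triebel--Lizorkin target the $\L^p$-norm is outermost. Concretely, after obtaining~\eqref{eq: B1}, the paper integrates both sides by $(\int_1^2|\cdot|^{q/\gamma}\,\d\lambda/\lambda)^{\gamma/q}$ and, via Minkowski's integral inequality (valid since $\gamma<q$), pulls the $\lambda$-integral through the $y$-integral inside the convolution. This replaces the $\lambda$-dependent quantity $\Phi^\lambda_{k+m}f$ by a $\lambda$-averaged quantity $\widetilde{\Phi}_{k+m}f$ that no longer depends on $\lambda$; only then does one take $\L^{p/\gamma}(\ell^{q/\gamma})$-norms and invoke the Hardy--Littlewood majorant bound $g_k\ast h\lesssim\cM h$ together with the Fefferman--Stein inequality and Ullrich's convolution lemma. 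The same principle governs Step~2. Your choice $\gamma<\min\{1,p,q,r\}$ (so that $p/\gamma,q/\gamma>1$) and your replacement of the majorant argument by a vector-valued convolution bound for the $\L^1$-dilated kernels $g_k$ are both fine; the only genuine gap is the placement of the $\lambda$-integration. (A minor citation slip: the Triebel--Lizorkin Peetre characterization you need from Ullrich is Theorem~2.8, not 2.9, which is the Besov case.)
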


\begin{proof}
Let $a>\frac{d}{ \min\{1,r,q,p\}}$ and $0<\gamma< \min\{1,r,q,p\}$ such that $a\gamma >d$. By a similar procedure as in the proof of Theorem~\ref{thm: char of lifted Besov data}, but with an extra integration by
\begin{align*}
    \bigg(\int\limits_{1}^2  |\cdot |^\frac{q}{\gamma}\,\frac{\d \lambda }{\lambda}\bigg)^\frac{\gamma}{q},
\end{align*}
 on both sides of \eqref{eq: B1}, followed by Minkowski's integral inequality ($q<\gamma$), we get for all $x\in \IR^d$, $t\in [1/2,2]$ and $l\in \IZ$,
\begin{align}
\label{eq: help align :)}
    2^{l\beta\gamma}|(\Phi_{2^{-l}}^*f)_a(x)|^\gamma &\lesssim \sum\limits_{k\in\IZ} 2^{-|k-l|\delta\gamma} 2^{k\beta\gamma} \sum\limits_{m=0}^\infty 2^{-mN\gamma} 2^{(k+m)d}\int\limits_{\IR^d} \frac{|(\widetilde{\Phi}_{k+m}f)(y)|^\gamma}{(1+2^k|x-y|)^{a\gamma}}\,\d y \nonumber\\
    &=\sum\limits_{k\in\IZ} 2^{-|k-l|\delta\gamma} 2^{k\beta\gamma} \sum\limits_{m=0}^\infty 2^{-mN\gamma} 2^{md} [g_k\ast |\widetilde{\Phi}_{k+m}f|^\gamma](x),
\end{align}
where $\delta>0$ is the same as before, but we define
\begin{align*}
    (\widetilde{\Phi}_{k+m}f)(y) \coloneqq  \bigg(\int\limits_{1}^2\bigg(\fint\limits_{\frac{\lambda}{2}}^\lambda \fint\limits_{B(y,2^{-(k+m)}t)} |(\Phi_{2^{-(k+m)}t}\ast f)(z) |^r\,\d z \d t\bigg)^\frac{q}{r}\,\frac{\d \lambda }{\lambda}\bigg)^\frac{1}{q}
\end{align*}
and
\begin{align*}
    g_k(x) \coloneqq \frac{2^{kd}}{(1+2^k|x|)^{a\gamma}}.
\end{align*}
Again, the functions $g_k$ have a uniform $\L^1$-bound with respect to $k\in\IZ$, and we note that they are radial. Using the majorant property of the Hardy--Littlewood maximal function, see \cite[Thm.\@ 2.1.10]{Grafakos}, we have
\begin{align*}
    2^{l\beta\gamma}|(\Phi_{2^{-l}}^*f)_a(x)|^\gamma &\lesssim\sum\limits_{k\in\IZ} 2^{-|k-l|\delta\gamma} 2^{k\beta\gamma} \sum\limits_{m=0}^\infty 2^{-mN\gamma} 2^{md} \cM( |\widetilde{\Phi}_{k+m}f|^\gamma)(x)\\
    &=\sum\limits_{k\in\IZ} 2^{-|k-l|\delta\gamma} h_k(x),
\end{align*}
where we define
\begin{align*}
    h_k(x) \coloneqq 2^{k\beta\gamma} \sum\limits_{m=0}^\infty 2^{-mN\gamma} 2^{md} \cM( |\widetilde{\Phi}_{k+m}f|^\gamma)(x) \quad (x\in \IR^d).
\end{align*}
Next, take the $\L^{\frac{p}{\gamma}}(\IR^d;\ell^{\frac{q}{\gamma}}(\IZ))$-norm on both sides and use the convolution estimate \cite[Lemma 2.13]{Ullrich1}, to get
\begin{align*}
   \Big\| \| 2^{l\beta}|(\Phi_{2^{-l}}^*f)_a(\cdot)| \|_{\ell^{q}(\IZ)} \Big\|_{\L^{p}}^\gamma &=\Big\| \| 2^{l\beta\gamma}|(\Phi_{2^{-l}}^*f)_a(\cdot)|^\gamma \|_{\ell^{\frac{q}{\gamma}}(\IZ)} \Big\|_{\L^{\frac{p}{\gamma}}}\\
   &\lesssim \Big\| \| \sum\limits_{k\in\IZ} 2^{-|k-l|\delta\gamma} h_k(\cdot) \|_{\ell^{\frac{q}{\gamma}}(\IZ)} \Big\|_{\L^{\frac{p}{\gamma}}}\\
   &\lesssim \Big\| \| h_k(\cdot) \|_{\ell^{\frac{q}{\gamma}}(\IZ)} \Big\|_{\L^{\frac{p}{\gamma}}}.
\end{align*}
By an index shift, we find
\begin{align*}
    h_k(x) %
    &=  \sum\limits_{l=k}^\infty 2^{-(l-k)(N\gamma-d+\beta\gamma)}  2^{l\beta\gamma}\cM( |\widetilde{\Phi}_{l}f|^\gamma)(x)\\
    &\leq  \sum\limits_{l\in\IZ} 2^{-|l-k|\delta'}  2^{l\beta\gamma}\cM( |\widetilde{\Phi}_{l}f|^\gamma)(x),
\end{align*}
where $\delta' \coloneqq (N\gamma-d+\beta\gamma)>0$ for $N\in\IN$ large enough. Using again \cite[Lemma 2.13]{Ullrich1}, and the Fefferman--Stein inequality, see for example \cite[Thm.\@ 2.1]{Ullrich1}, deduce
\begin{align*}
    \Big\| \| 2^{l\beta}(\Phi_{2^{-l}}^*f)_a(\cdot) \|_{\ell^{q}(\IZ)} \Big\|_{\L^{p}}^\gamma &\lesssim \Big\| \| \sum\limits_{l\in\IZ} 2^{-|l-k|\delta'}  2^{l\beta\gamma}\cM( |\widetilde{\Phi}_{l}f|^\gamma)(\cdot) \|_{\ell^{\frac{q}{\gamma}}(\IZ)} \Big\|_{\L^{\frac{p}{\gamma}}}\\
    &\lesssim \Big\| \| 2^{l\beta\gamma}\cM( |\widetilde{\Phi}_{l}f|^\gamma)(\cdot) \|_{\ell^{\frac{q}{\gamma}}(\IZ)} \Big\|_{\L^{\frac{p}{\gamma}}}\\
    &\lesssim \Big\| \| 2^{l\beta\gamma} |(\widetilde{\Phi}_{l}f)(\cdot)|^\gamma \|_{\ell^{\frac{q}{\gamma}}(\IZ)} \Big\|_{\L^{\frac{p}{\gamma}}}\\
    &= \Big\| \| 2^{l\beta} (\widetilde{\Phi}_{l}f)(\cdot) \|_{\ell^{q}(\IZ)} \Big\|_{\L^{p}}^\gamma.
\end{align*}
The norm on the left-hand side is equivalent to $\|f\|^\gamma_{\dot \F^\beta_{p,q}}$; see \cite[Thm.\@ 2.8]{Ullrich1}. For the norm on the right-hand side, we calculate
\begin{align*}
    \Big\| \| 2^{l\beta} (\widetilde{\Phi}_{l}f)(x) \|_{\ell^{q}(\IZ)} \Big\|_{\L^{p}}&= \bigg(\int\limits_{\IR^d} \bigg(\sum\limits_{l\in\IZ} 2^{l\beta q} \int\limits_{1}^2\bigg(\fint\limits_{\frac{\lambda}{2}}^\lambda \fint\limits_{B(x,2^{-l}t)} |(\Phi_{2^{-l}t}\ast f)(z) |^r\,\d z \d t\bigg)^\frac{q}{r}\,\frac{\d \lambda }{\lambda} \bigg)^\frac{p}{q}\,\d x\bigg)^\frac{1}{p}\\
    &= \bigg(\int\limits_{\IR^d} \bigg(\sum\limits_{l\in\IZ} 2^{l\beta q} \int\limits_{1}^2\bigg(\fint\limits_{\frac{2^{-l}\lambda}{2}}^{2^{-l}\lambda} \fint\limits_{B(x,s)} |(\Phi_s\ast f)(z) |^r\,\d z \d s\bigg)^\frac{q}{r}\,\frac{\d \lambda }{\lambda} \bigg)^\frac{p}{q}\,\d x\bigg)^\frac{1}{p},
\intertext{where we substituted $2^{-l}t = s$. Finally, substitute $2^{-l}\lambda = t$ and use $B(x,s)\subset B(x,t)$ to get}
    &= \bigg(\int\limits_{\IR^d} \bigg(\sum\limits_{l\in\IZ} 2^{l\beta q} \int\limits_{2^{-l}}^{2^{-l+1}}\bigg(\fint\limits_{\frac{t}{2}}^{t} \fint\limits_{B(x,s)} |(\Phi_s\ast f)(z) |^r\,\d z \d s\bigg)^\frac{q}{r}\,\frac{\d t }{t} \bigg)^\frac{p}{q}\,\d x\bigg)^\frac{1}{p}\\
    &\lesssim \bigg(\int\limits_{\IR^d} \bigg(\sum\limits_{l\in\IZ}  \int\limits_{2^{-l}}^{2^{-l+1}}\bigg(\fint\limits_{\frac{t}{2}}^{t} \fint\limits_{B(x,t)} |s^{-\beta}(\Phi_s\ast f)(z) |^r\,\d z \d s\bigg)^\frac{q}{r}\,\frac{\d t }{t} \bigg)^\frac{p}{q}\,\d x\bigg)^\frac{1}{p}\\
    &=\bigg(\int\limits_{\IR^d} \bigg( \int\limits_{0}^\infty\bigg(\fint\limits_{\frac{t}{2}}^{t} \fint\limits_{B(x,t)} |s^{-\beta}(\Phi_s\ast f)(z) |^r\,\d z \d s\bigg)^\frac{q}{r}\,\frac{\d t }{t} \bigg)^\frac{p}{q}\,\d x\bigg)^\frac{1}{p},
\end{align*}
using also $2^{-l}\simeq t$ and $s \simeq t$ in the penultimate step.
Hence,
\begin{align*}
    \|f\|_{\dot\F^\beta_{p,q}}\lesssim\bigg(\int\limits_{\IR^d} \bigg( \int\limits_{0}^\infty\bigg(\fint\limits_{\frac{t}{2}}^{t} \fint\limits_{B(x,t)} |s^{-\beta}(\Phi_s\ast f)(z) |^r\,\d z \d s\bigg)^\frac{q}{r}\,\frac{\d t }{t} \bigg)^\frac{p}{q}\,\d x\bigg)^\frac{1}{p}.
\end{align*}
For the reverse inequality, as in the proof of Theorem~\ref{thm: char of lifted Besov data}, it suffices to show
\begin{align}
\label{reverse estimate}
     \bigg(\int\limits_{\IR^d} \bigg( \int\limits_{0}^\infty t^{-q\beta}\Big(\sup\limits_{\frac{t}{2}<s \leq t} (\Phi_s^*f)_a(x)\Big)^q\,\frac{\d t }{t} \bigg)^\frac{p}{q}\,\d x\bigg)^\frac{1}{p}\lesssim \|f\|_{\dot \F^\beta_{p,q}}.
\end{align}
We start with the inequality \eqref{eq: identity continous less diskrete}, which holds for any $f\in \cS'_R(\IR^d)$, namely
\begin{align*}
    2^{l\beta\gamma}\Big(\sup\limits_{\frac{\lambda}{2}<t\leq \lambda}|(\Phi^*_{2^{-l}t}f)_a(x) |\Big)^\gamma\lesssim \sum\limits_{k\in\IZ} 2^{-|k-l|\delta\gamma} 2^{k\beta\gamma}(\Phi^*_{2^{-k}}f)_a(x)^\gamma
\end{align*}
for $\lambda\in [1,2]$, $x\in\IR^d$ and $l\in\IZ$. Integrate both sides by
\begin{align*}
    \bigg(\int\limits_1^2 |\cdot|^\frac{q}{\gamma}\,\frac{\d \lambda}{\lambda}\bigg)^\frac{\gamma}{q},
\end{align*}
to get
\begin{align*}
    2^{l\beta \gamma}\bigg(\int\limits_1^2\Big(\sup\limits_{\frac{\lambda}{2}<t\leq \lambda}|(\Phi^*_{2^{-l}t}f)_a(x) |\Big)^q\,\frac{\d \lambda}{\lambda}\bigg)^\frac{\gamma}{q}\lesssim \sum\limits_{k\in\IZ} 2^{-|k-l|\delta\gamma} 2^{k\beta\gamma}(\Phi^*_{2^{-k}}f)_a(x)^\gamma.
\end{align*}
Take $\L^{\frac{p}{\gamma}}(\IR^d;\ell^\frac{q}{\gamma}(\IZ))$-norms on both sides and use the convolution estimate \cite[Lemma 2.13]{Ullrich1} as well as \cite[Thm.\@ 2.8]{Ullrich1} to get
\begin{align*}
    \Bigg\| \bigg\| 2^{l\beta }\bigg(\int\limits_1^2\Big(\sup\limits_{\frac{\lambda}{2}<t\leq \lambda}|(\Phi^*_{2^{-l}t}f)_a(\cdot) |\Big)^q\,\frac{\d \lambda}{\lambda}\bigg)^\frac{1}{q} \bigg\|_{\ell^q}\Bigg\|_{\L^p}^\gamma &\lesssim \Bigg\| \bigg\| \sum\limits_{k\in\IZ} 2^{-|k-l|\delta\gamma} 2^{k\beta\gamma}(\Phi^*_{2^{-k}}f)_a(\cdot)^\gamma\bigg\|_{\ell^\frac{q}{\gamma}}\Bigg\|_{\L^\frac{p}{\gamma}} \\
    &\lesssim \Big\| \big\|  2^{l\beta\gamma}(\Phi^*_{2^{-l}}f)_a(\cdot)^\gamma\big\|_{\ell^\frac{q}{\gamma}}\Big\|_{\L^\frac{p}{\gamma}} \\
    &= \Big\| \big\|  2^{l\beta}(\Phi^*_{2^{-l}}f)_a(\cdot)\big\|_{\ell^q}\Big\|_{\L^p}^\gamma \\
    &\simeq \|f\|_{\dot \F^\beta_{p,q}}^\gamma.
\end{align*}
Finally, for the $q$-th power of the inner $\ell^q$-(quasi-)norm on the left-hand side, we have
\begin{align*}
    \sum\limits_{l\in\IZ}2^{l\beta q}\int\limits_1^2\Big(\sup\limits_{\frac{\lambda}{2}<t\leq \lambda}|(\Phi^*_{2^{-l}t}f)_a(x) |\Big)^q\,\frac{\d \lambda}{\lambda}
    &=\sum\limits_{l\in\IZ}2^{l\beta q}\int\limits_1^2\Big(\sup\limits_{\frac{2^{-l}\lambda}{2}<s\leq 2^{-l}\lambda}|(\Phi^*_{s}f)_a(x) |\Big)^q\,\frac{\d \lambda}{\lambda} \\
    &=\sum\limits_{l\in\IZ}2^{l\beta q}\int\limits_{2^{-l}}^{2^{-l+1}}\Big(\sup\limits_{\frac{t}{2}<s\leq t}|(\Phi^*_{s}f)_a(x) |\Big)^q\,\frac{\d t}{t}\\
    &\simeq \sum\limits_{l\in\IZ}\int\limits_{2^{-l}}^{2^{-l+1}}t^{-\beta q}\Big(\sup\limits_{\frac{t}{2}<s\leq t}|(\Phi^*_{s}f)_a(x) |\Big)^q\,\frac{\d t}{t} \\
    &= \int\limits_0^\infty t^{-\beta q}\Big(\sup\limits_{\frac{t}{2}<s\leq t}|(\Phi^*_{s}f)_a(x) |\Big)^q\,\frac{\d t}{t},
\end{align*}
where we used the substitutions $2^{-l}t=s$, followed by $2^{-l}\lambda = t$. This proves \eqref{reverse estimate} and therefore the theorem.
\end{proof}

\begin{remark}
    Theorem~\ref{thm: char of lifted Triebel data} is generalized to the endpoint $p=\infty$ by the third-named author in \cite{Haardt}.
\end{remark}

As an illustration of Theorem~\ref{thm: char of lifted Besov data} and \ref{thm: char of lifted Triebel data}, we present the following Gauss--Weierstrass semigroup characterization of $\dot \B^\beta_{p,q}(\IR^d)$ and $\dot \F^\beta_{p,q}(\IR^d)$ (if $p<\infty$) in the case $\beta <0$. In this case, we can choose $R=-1$ such that the heat kernel $\Phi(x) = (4\pi)^{-\frac{d}{2}} e^{-\frac{|x|^2}{4}}$ satisfies \eqref{property 1} and \eqref{property 2} and the spaces $\dot \B^\beta_{p,q}(\IR^d)$ and $\dot \F^\beta_{p,q}(\IR^d)$ are realized within $\cS'(\IR^d)$; see Remark~\ref{rem:characterization}. Furthermore, we refer to Definition~\ref{def: function spaces} below for the function spaces $\T^{p,q,r}_\beta$ and $\Z^{p,q,r}_\beta$ that appear in the following statement.

\begin{proposition}[Gauss--Weierstrass semigroup characterization of $\dot \B^\beta_{p,q}$ and $\dot \F^\beta_{p,q}$]
\label{prop: Gauss--Weierstrass char}
Let $0<p,q\leq \infty$ and $\beta<0$. A tempered distribution $f\in\cS'(\IR^d)$ belongs to $\dot \B^\beta_{p,q}(\IR^d)$ if and only if $e^{t^2\Delta}f \in \Z^{p,q,r}_\beta$ for any $0<r\leq \infty$ with equivalence of norms as
\begin{align*}
    \|f\|_{\dot \B^\beta_{p,q}} \simeq \|e^{t^2\Delta}f \|_{\Z^{p,q,r}_\beta}.
\end{align*}
If in addition $0<p<\infty $, then $f\in\cS'(\IR^d)$ belongs to $\dot \F^\beta_{p,q}(\IR^d)$ if and only if $e^{t^2\Delta}f \in \T^{p,q,r}_\beta$ for any $0<r\leq \infty$ with equivalence of norms as
\begin{align*}
    \|f\|_{\dot \F^\beta_{p,q}} \simeq \|e^{t^2\Delta}f \|_{\T^{p,q,r}_\beta},
\end{align*}
where $\Delta$ denotes the Laplace operator.
\end{proposition}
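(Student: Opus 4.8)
The plan is to observe that the Gauss--Weierstrass semigroup is exactly a convolution of the type appearing in Theorems~\ref{thm: char of lifted Besov data} and~\ref{thm: char of lifted Triebel data}, and then to read off the two equivalences directly. First I would record the elementary identity
\[
    e^{t^2\Delta}f = \Phi_t \ast f, \qquad \Phi(x) = (4\pi)^{-\frac{d}{2}}e^{-\frac{|x|^2}{4}},\quad \Phi_t(\cdot)=t^{-d}\Phi(\cdot/t),
\]
which holds for $f \in \cS'(\IR^d)$: on the Fourier side it amounts to $\cF(\Phi_t\ast f)(\xi) = \cF\Phi(t\xi)\,\cF f(\xi) = e^{-t^2|\xi|^2}\cF f(\xi) = \cF(e^{t^2\Delta}f)(\xi)$, and $\Phi_t\ast f$ is just the usual action of the heat semigroup on tempered distributions. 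Consequently the extension $\cE_\Delta(f)(t,x) = (e^{t^2\Delta}f)(x)$ coincides with $(\Phi_t\ast f)(x)$, so $\|\cE_\Delta(f)\|_{\Z^{p,q,r}_\beta}$ and $\|\cE_\Delta(f)\|_{\T^{p,q,r}_\beta}$ are precisely the quantities appearing on the right-hand sides of the two characterization theorems for this choice of $\Phi$.

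Next I would check that $\Phi$ satisfies the hypotheses of those theorems with $R = -1$. Since $\beta < 0$, the condition $R+1 > \beta$ reads $0 > \beta$ and holds; property~\eqref{property 2} is vacuous for $R=-1$; and property~\eqref{property 1} holds because $\cF\Phi(\xi) = e^{-|\xi|^2} > 0$ for every $\xi \in \IR^d$, in particular on any annulus $\{\varepsilon/2<|\xi|<\varepsilon\}$. Thus Theorem~\ref{thm: char of lifted Besov data} applies and gives $\dot\B^\beta_{p,q}(\IR^d) = \{f\in\cS'_{-1}(\IR^d) : \|\cE_\Delta(f)\|_{\Z^{p,q,r}_\beta}<\infty\}$ with equivalent quasinorms, for every $0<r\le\infty$; and for $0<p<\infty$, Theorem~\ref{thm: char of lifted Triebel data} gives the same statement with $\T^{p,q,r}_\beta$ replacing $\Z^{p,q,r}_\beta$. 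Since $\cS_{-1}(\IR^d) = \cS(\IR^d)$, hence $\cS'_{-1}(\IR^d) = \cS'(\IR^d)$, this is exactly the assertion of the proposition.

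The one point requiring a word is the compatibility of the underlying realizations: the characterization theorems interpret the homogeneous space inside $\cS'_R(\IR^d)$, and we want this to match the realization inside $\cS'(\IR^d)$ used in the statement. By Remark~\ref{rem:characterization} the canonical realization lies in $\cS'_M(\IR^d)$ with $M = \max\{\lfloor\beta - \frac{d}{p}\rfloor,-1\}$, and since $\beta < 0 \le \frac{d}{p}$ forces $\lfloor\beta-\frac{d}{p}\rfloor \le -1$, we get $M = -1 = R$, so there is nothing to reconcile. I do not anticipate any genuine obstacle beyond these bookkeeping points: once the semigroup is rewritten as $\Phi_t\ast f$ and properties~\eqref{property 1} and~\eqref{property 2} of the Gaussian are recorded, the proposition is an immediate specialization of Theorems~\ref{thm: char of lifted Besov data} and~\ref{thm: char of lifted Triebel data}.
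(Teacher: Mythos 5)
Your proposal is correct and matches the paper's reasoning: the paper gives exactly this argument (in the short paragraph preceding the proposition), namely that for $\beta<0$ one may take $R=-1$, so property~\eqref{property 2} is vacuous, property~\eqref{property 1} holds since $\cF\Phi = e^{-|\xi|^2}>0$, the realization is within $\cS'(\IR^d)$, and Theorems~\ref{thm: char of lifted Besov data} and~\ref{thm: char of lifted Triebel data} then specialize immediately. Your extra check that $M=-1$ in Remark~\ref{rem:characterization} is a nice explicit verification of the compatibility of realizations that the paper invokes more briefly.
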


\section{Functional analytic properties of $\Z$-spaces}
\label{sec:Z_spaces}

\noindent We saw in the previous section that homogeneous Besov--Triebel--Lizorkin spaces can be characterized via extensions to the upper half-space $\IR^{d+1}_+$ that belong to a certain extension space. In this section, we want to study these extension spaces from a function space theoretical point of view. First, we provide concrete definitions of these spaces and compare them to known spaces from the literature. Then, we study properties of these spaces, such as completeness and density (Section~\ref{subsec: basic prop}), duality (Section~\ref{subsec: duality >1} and \ref{subsec: duality p<1}), as well as dyadic descriptions of these spaces in conjunction with Hardy--Littlewood--Sobolev-type embeddings (Section~\ref{subsec: dyad and emb}).

\subsection{Definitions and basic properties}
\label{subsec: basic prop}

Here, we define the function spaces that appear in the characterizations of Section~\ref{sec: characterizations}, and collect basic properties such as completeness and density. Furthermore, we provide a link to existing function spaces from the literature to contextualize our new function spaces.

\begin{definition}[$\Z$-spaces and tent spaces]
\label{def: function spaces}
    For $0<p,q,r\leq \infty$ and $\beta\in\IR$ define the space $\Z^{p,q,r}_\beta$ as the set of all measurable functions $f$ on $\IR^{d+1}_+$ such that
    \begin{align*}
        \|f\|_{\Z^{p,q,r}_\beta} \coloneqq \bigg(\int\limits_{0}^\infty \bigg\|\bigg(\fint\limits_{\frac{t}{2}}^t \fint\limits_{B(\cdot,t)} |s^{-\beta}f(s,y)|^r \, \d y \d s\bigg)^\frac{1}{r} \bigg\|_{\L^p}^q\,\frac{\d t}{t}\bigg)^\frac{1}{q}<\infty,
    \end{align*}
    with the usual replacement of the integral by an $\esssup$ if the corresponding parameters $p,q,r$ are infinite.
    If in addition $0<p< \infty$, then define the space $\T^{p,q,r}_\beta$ as the set of all measurable functions $f$ on $\IR^{d+1}_+$ such that
    \begin{align*}
        \|f\|_{\T^{p,q,r}_\beta} \coloneqq \bigg\| \bigg( \int\limits_0^\infty \bigg(\fint\limits_{\frac{t}{2}}^t \fint\limits_{B(\cdot,t)} |s^{-\beta}f(s,y)|^r \,\d y \d s\bigg)^\frac{q}{r}\,\frac{\d t}{t}\bigg)^\frac{1}{q} \bigg\|_{\L^p} < \infty,
    \end{align*}
    with the same replacements in the case of infinite parameters as before. In the case $\beta = 0$ we usually drop the dependence from the notation.
\end{definition}

Let $0<a<b<\infty$ and $c>0$. Then, we define the Whitney box $W_{a,b,c}(t,x)$ associated to a point $(t,x)\in\IR^{d+1}_+$ as
\begin{align*}
    W_{a,b,c}(t,x)\coloneqq (at,bt)\times B(x,ct).
\end{align*}
If $a=\frac{1}{2}$, $b=c=1$ then we get $W_{a,b,c}(t,x) = W(t,x)$.
Even though our function spaces $\Z^{p,q,r}_\beta$ and $\T^{p,q,r}_\beta$ are defined by averages with respect to the specific Whitney box $W(t,x)$, it turns out that any other choice $(a,b,c)$ as above yields an equivalent norm. We will provide a proof for this fact in Proposition~\ref{prop: independent Whitney cube} and use it freely afterwards.

\begin{remark}
\label{rem: comparison of known spaces}
Let us compare with the definitions in \cites{Coifman_Meyer_Stein,Barton_Mayboroda}, see also \cite{Amenta2}, where authors introduced new scales of function spaces, so-called tent and Z-spaces. For $0 < p,q,r \leq \infty$ and $\beta \in \IR$, the tent space $\T^{p,q}_\beta$ (if in addition $p < \infty$) and the $\Z$-space $\Z^{p,r}_\beta$ defined in these references consist of all measurable functions $f$ on $\IR^{d+1}_+$ such that the corresponding (quasi-)norms
    \begin{align*}
        &\|f\|_{\Z^{p,r}_\beta} \coloneqq \bigg(\int\limits_{0}^\infty \int\limits_{\IR^d}\bigg(\fint\limits_{\frac{t}{2}}^t \fint\limits_{B(x,t)} |s^{-\beta}f(s,y)|^r \, \d y \d s\bigg)^\frac{p}{r}\,\frac{\d x\d t}{t}\bigg)^\frac{1}{p},\\
        &\|f\|_{\T^{p,q}_\beta} \coloneqq \bigg( \int\limits_{\IR^d}\bigg(\int\limits_{0}^\infty \fint\limits_{B(x,t)} |t^{-\beta}f(t,y)|^q \, \frac{\d y \d t}{t}\bigg)^\frac{p}{q}\,\d x\bigg)^\frac{1}{p}
    \end{align*}
    are finite, with the usual modifications in the case of infinite parameters. Our spaces are generalizations of these spaces with coincidence in the cases $\T^{p,q,q}_\beta = \T^{p,q}_\beta$ and $\Z^{p,p,r}_\beta =\Z^{p,r}_\beta$.
\end{remark}

Moreover, Huang has introduced in \cite{Huang} the Whitney-averaged tent space $\T^{p,r}_{q,\beta}$ as the set of all measurable functions $f$ on $\IR^{d+1}_+$ such that
\begin{align*}
    \|f\|_{\T^{p,r}_{q,\beta}} \coloneqq \bigg(\int\limits_{\IR^d} \bigg( \int\limits_0^\infty\fint\limits_{B(x,t)} \bigg(\fint\limits_{\frac{t}{2}}^{2t} \fint\limits_{B(y,t)} |s^{-\beta}f(s,z)|^r \,\d z \d s\bigg)^\frac{q}{r}\,\frac{\d y\d t}{t}\bigg)^\frac{p}{q}\,\d x \bigg)^\frac{1}{p} < \infty.
\end{align*}
Observe that they have an additional average compared to the norm of $\T^{p,q,r}_\beta$. Nevertheless, both spaces coincide with equivalence of norms due to the following lemma.

\begin{lemma}
    Let $0<p<\infty$, $0<q,r\leq \infty$ and $\beta\in\IR$. Then, $\T^{p,r}_{q,\beta}=\T^{p,q,r}_\beta$ with equivalent norms.
\end{lemma}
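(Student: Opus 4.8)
The plan is to compare the two quasi-norms pointwise-in-$x$ after unwinding the nested averages, reducing everything to a Fubini-type interchange and a covering argument adapted to Whitney boxes. The only structural difference between $\|f\|_{\T^{p,r}_{q,\beta}}$ and $\|f\|_{\T^{p,q,r}_\beta}$ is the extra spatial average $\fint_{B(x,t)}(\cdots)\,\d y$ inside the $t$-integral; so it suffices to show, for each fixed $x\in\IR^d$, that
\[
    \int_0^\infty \fint_{B(x,t)}\bigg(\fint_{\frac{t}{2}}^{2t}\fint_{B(y,t)}|s^{-\beta}f(s,z)|^r\,\d z\,\d s\bigg)^{\frac{q}{r}}\frac{\d y\,\d t}{t}
    \simeq \int_0^\infty\bigg(\fint_{\frac{t}{2}}^{t}\fint_{B(x,t)}|s^{-\beta}f(s,z)|^r\,\d z\,\d s\bigg)^{\frac{q}{r}}\frac{\d t}{t},
\]
with implicit constants depending only on $d,q,r$, and then take $\L^p$-norms in $x$ of the $q$-th roots. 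This last step is immediate once the pointwise equivalence is established, since $\|\cdot\|_{\L^p}$ is monotone.

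\textbf{For the direction} $\T^{p,q,r}_\beta\hookrightarrow\T^{p,r}_{q,\beta}$, I would first note that for $y\in B(x,t)$ one has $B(y,t)\subset B(x,2t)$, so the innermost double average over $W_{1/2,2,1}(t,y)$ is controlled by a constant times the average over the larger Whitney-type region $(\tfrac{t}{2},2t)\times B(x,2t)$, which by the angle-change principle (Proposition~\ref{prop: independent Whitney cube}) is equivalent to the standard Whitney average defining $\T^{p,q,r}_\beta$ but with $t$ possibly doubled. Since the resulting quantity no longer depends on $y$, the outer $\fint_{B(x,t)}\d y$ is harmless, and after a change of variable $t\mapsto 2t$ (or $t\mapsto t/2$) in the $t$-integral we recover $\|f\|_{\T^{p,q,r}_\beta}$ up to a constant. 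Here one uses that $q/r$ may be less than one, but since we only enlarge the integration region this direction is a clean monotonicity estimate followed by the already-proven angle change.

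\textbf{For the reverse direction} $\T^{p,r}_{q,\beta}\hookrightarrow\T^{p,q,r}_\beta$, the point is to produce a \emph{lower} bound for the spatial average, which is where the actual work lies. Fix $t$ and $x$; I would cover $B(x,t)$ — or more precisely the relevant portion of the half-space strip $(\tfrac{t}{2},t)\times B(x,t)$ — by a bounded number (depending only on $d$) of balls $B(y_j,t/2)$ with centers $y_j\in B(x,t)$, so that for each such $j$ the region $(\tfrac{t}{4},t)\times B(y_j,t/2)\subset (\tfrac{t}{4},t)\times B(x,2t)$ is comparable to a Whitney box $W(\cdot,\cdot)$ up to the angle-change constant. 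Summing the inner averages over the $y_j$ and using that $\fint_{B(x,t)}(\cdots)^{q/r}\d y$ is bounded below by an average of the values at (neighborhoods of) the $y_j$ — again invoking Proposition~\ref{prop: independent Whitney cube} to replace the centered average by a fixed Whitney average, and the elementary fact that for finitely many nonnegative terms $\sum a_j^{q/r}$ and $(\sum a_j)^{q/r}$ are comparable — one recovers the single Whitney average defining $\T^{p,q,r}_\beta$ at scale $t$. After integrating $\tfrac{\d t}{t}$ and taking $\L^p$ in $x$, the claim follows.

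\textbf{The main obstacle} is bookkeeping the interplay between the three scales $\tfrac{t}{2}$, $t$, $2t$ and the ball radii when passing between the two definitions: one must check that every intermediate region encountered is a genuine Whitney box $W_{a,b,c}$ with $0<a<b<\infty$, $c>0$ so that Proposition~\ref{prop: independent Whitney cube} applies, and keep track that the exponent $q/r$ (possibly $<1$) only ever appears in steps where we enlarge or cover regions by finitely many pieces, never in a way requiring the triangle inequality. None of this is deep, but it is the kind of argument where an off-by-a-factor-of-two in a radius invalidates a containment, so the careful choice of the covering radius ($t/2$ above) and of the doubling direction in the $t$-variable is the crux.
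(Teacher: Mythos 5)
Your Step~1 is the same argument as the paper's. Your Step~2 takes a genuinely different route. The paper introduces the spatial average trivially (averaging a quantity that does not depend on $y$), then enlarges the \emph{inner} ball via $B(x,t)\subset B(y,2t)$ for $y\in B(x,t)$ (a pointwise inclusion), and finally invokes Huang's Whitney-parameter invariance of the full $\T^{p,r}_{q,\beta}$ norm \cite[Obs.\@ 2.4]{Huang} to pass from $(\tfrac{t}{2},t)\times B(y,2t)$ to $(\tfrac{t}{2},2t)\times B(y,t)$. You instead aim at a pointwise-in-$x$ equivalence of the two $t$-integrals via a covering of $B(x,t)$ by small balls, which avoids Huang's result as a black box and is therefore somewhat more self-contained. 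This pointwise equivalence is indeed true, but two details in your Step~2 need tightening. First, with centers $y_j\in B(x,t)$ the covering balls $B(y_j,t/2)$ can stick out of $B(x,t)$; when you then lower-bound $\fint_{B(x,t)}(\cdots)^{q/r}\,\d y$ by averages over the $B(y_j,t/2)$, you must either restrict each $B(y_j,t/2)$ to $B(x,t)$ and check that the intersection retains a dimension-dependent fraction of $|B(y_j,t/2)|$, or use a smaller covering radius (e.g.\ $t/10$) with centers in $B(x,9t/10)$ so that all covering balls stay inside $B(x,t)$. Second, invoking Proposition~\ref{prop: independent Whitney cube} \emph{inside} a pointwise-in-$(t,x)$ estimate is misplaced — that proposition is a statement about the full $\Z^{p,q,r}_\beta$ quasi-norm (and indeed rests on Lemma~\ref{lem: change of angle}, i.e.\ the change-of-angle estimate in $\L^p_x$). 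The fact you actually need at the pointwise level — that the Whitney average over $(\tfrac{t}{2},t)\times B(y_j,t/10)$ is controlled by the Huang average over $(\tfrac{t}{2},2t)\times B(y,t)$ whenever $y\in B(y_j,t/10)$ — is elementary: it follows from the set inclusion and a dimensional bound on the ratio of the two measures. The same remark applies to your Step~1, where you also cite Proposition~\ref{prop: independent Whitney cube} for a pointwise comparison; there, as in the paper, the proposition should be invoked only at the end on the full norm. With these repairs your approach works, though the paper's route is shorter precisely because it delegates the constant-chasing to Huang's global result.
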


\begin{proof}
    To keep the notation simple, we suppose $q,r$ finite. The other cases are similar.

    \textbf{Step 1: $\T^{p,q,r}_\beta \subset \T^{p,r}_{q,\beta}$}.
    Using $B(y,t) \subset B(x, 2t)$, we calculate
    \begin{align*}
        \|f\|_{\T^{p,r}_{q,\beta}}^p &=\int\limits_{\IR^d}\bigg(\int\limits_0^\infty\fint\limits_{B(x,t)} \bigg(\fint\limits_{\frac{t}{2}}^{2t} \fint\limits_{B(y,t)} |s^{-\beta}f(s,z)|^r \,\d z \d s\bigg)^\frac{q}{r}\,\frac{\d y\d t}{t}\bigg)^\frac{p}{q}\,\d x\\
        &\lesssim \int\limits_{\IR^d}\bigg(\int\limits_0^\infty\fint\limits_{B(x,t)} \bigg(\fint\limits_{\frac{t}{2}}^{2t} \fint\limits_{B(x,2t)} |s^{-\beta}f(s,z)|^r \,\d z \d s\bigg)^\frac{q}{r}\,\frac{\d y\d t}{t}\bigg)^\frac{p}{q}\,\d x\\
        &=\int\limits_{\IR^d}\bigg(\int\limits_0^\infty \bigg(\fint\limits_{\frac{t}{2}}^{2t} \fint\limits_{B(x,2t)} |s^{-\beta}f(s,z)|^r \,\d z \d s\bigg)^\frac{q}{r}\,\frac{\d t}{t}\bigg)^\frac{p}{q}\,\d x.
    \end{align*}
    A straight forward change of Whitney parameters in the norm on the right-hand side implies
    \begin{align*}
        \|f\|_{\T^{p,r}_{q,\beta}}^p &\lesssim\int\limits_{\IR^d}\bigg(\int\limits_0^\infty \bigg(\fint\limits_{\frac{t}{2}}^{2t} \fint\limits_{B(x,2t)} |s^{-\beta}f(s,z)|^r \,\d z \d s\bigg)^\frac{q}{r}\,\frac{\d t}{t}\bigg)^\frac{p}{q}\,\d x\\
        &\lesssim \int\limits_{\IR^d}\bigg(\int\limits_0^\infty \bigg(\fint\limits_{\frac{t}{2}}^{t} \fint\limits_{B(x,t)} |s^{-\beta}f(s,z)|^r \,\d z \d s\bigg)^\frac{q}{r}\,\frac{\d t}{t}\bigg)^\frac{p}{q}\,\d x\\
        &= \|f\|_{\T^{p,q,r}_\beta}^p.
    \end{align*}
     \textbf{Step 2: $ \T^{p,r}_{q,\beta}\subset \T^{p,q,r}_\beta $.}
    Introducing an extra average, we get the identity
    \begin{align*}
        \|f\|_{\T^{p,q,r}_\beta}^p &=\int\limits_{\IR^d}\bigg(\int\limits_0^\infty \bigg(\fint\limits_{\frac{t}{2}}^{t} \fint\limits_{B(x,t)} |s^{-\beta}f(s,z)|^r \,\d z \d s\bigg)^\frac{q}{r}\,\frac{\d t}{t}\bigg)^\frac{p}{q}\,\d x\\
        &=\int\limits_{\IR^d}\bigg(\int\limits_0^\infty\fint\limits_{B(x,t)} \bigg(\fint\limits_{\frac{t}{2}}^{t} \fint\limits_{B(x,t)} |s^{-\beta}f(s,z)|^r \,\d z \d s\bigg)^\frac{q}{r}\,\frac{\d y\d t}{t}\bigg)^\frac{p}{q}\,\d x.
    \end{align*}
    For fixed $x\in\IR^d$ we use the covering $B(x,t)\subset B(y,2t)$ for $y\in B(x,t)$ to get
    \begin{align*}
        \|f\|_{\T^{p,q,r}_\beta}^p \lesssim \int\limits_{\IR^d}\bigg(\int\limits_0^\infty\fint\limits_{B(x,t)} \bigg(\fint\limits_{\frac{t}{2}}^{t} \fint\limits_{B(y,2t)} |s^{-\beta}f(s,z)|^r \,\d z \d s\bigg)^\frac{q}{r}\,\frac{\d y\d t}{t}\bigg)^\frac{p}{q}\,\d x.
    \end{align*}
    Since the norm
    of $\T^{p,r}_{q,\beta}$ is invariant under a change of Whitney parameters, see \cite[Obs.\@ 2.4]{Huang}, we get
    \begin{align*}
         \|f\|_{\T^{p,q,r}_\beta}^p \lesssim\int\limits_{\IR^d}\bigg(\int\limits_0^\infty\fint\limits_{B(x,t)} \bigg(\fint\limits_{\frac{t}{2}}^{2t} \fint\limits_{B(y,t)} |s^{-\beta}f(s,z)|^r \,\d z \d s\bigg)^\frac{q}{r}\,\frac{\d y\d t}{t}\bigg)^\frac{p}{q}\,\d x =\|f\|_{\T^{p,r}_{q,\beta}}^p,
    \end{align*}
    which proves the claim.
\end{proof}

\begin{remark}
    We point out that Huang also defined and studied such tent spaces in the case $p = \infty$.
\end{remark}

For Huang's tent spaces $\T^{p,r}_{q,\beta}$, many function space properties are already established. By virtue of the preceding lemma, many of them transfer to our tent spaces $\T^{p,q,r}_\beta$ as well. This is why we will mainly focus on the study of the $\Z^{p,q,r}_\beta$ spaces in this article. The $\T^{p,q,r}_\beta$ spaces are further investigated by the third-named author in the follow-up article~\cite{Haardt}.
A notable exception is Proposition~\ref{prop: real int tent space}, in which we link the real interpolation theories of tent and $\Z$-spaces.

\begin{lemma}
    For all $0<r\leq \infty$ and $f\in \L^0(\IR^{d+1}_+)$, the function
    \begin{align*}
        F(t,x) \coloneqq  \bigg(\fint\limits_{\frac{t}{2}}^{t} \fint\limits_{B(x,t)}|f(s,y)|^r\,\d y \d s\bigg)^\frac{1}{r}
    \end{align*}
    is lower semicontinuous. Consequently, if $p=\infty$ and/or $q=\infty$, one can replace the respective $\esssup$ by $\sup$ in the definition of $\Z^{p,q,r}_\beta$ for all $0<p,q,r\leq \infty$ and $\beta\in\IR$.
\end{lemma}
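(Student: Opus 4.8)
The plan is to prove that $F$ is lower semicontinuous on $\IR^{d+1}_+$ and then to read off the statement about $\esssup$ versus $\sup$ from it. When $r<\infty$ one has $F=(F^r)^{1/r}$, and since $u\mapsto u^{1/r}$ is continuous and nondecreasing on $[0,\infty]$, it suffices to prove lower semicontinuity of $(t,x)\mapsto F(t,x)^r=|W(t,x)|^{-1}\int_{W(t,x)}|f(s,y)|^r\,\d y\,\d s$; when $r=\infty$ one works with $(t,x)\mapsto F(t,x)=\esssup_{W(t,x)}|f|$ directly. The three elementary inputs I would use are: $W(t,x)=(t/2,t)\times B(x,t)$ is \emph{open}; its measure $|W(t,x)|=c_d\,t^{d+1}$ (with $c_d>0$ a dimensional constant) depends continuously on $(t,x)$; and for every compact $K\subset W(t,x)$ and every sequence $(t_n,x_n)\to(t,x)$ one has $K\subset W(t_n,x_n)$ for all large $n$ --- equivalently $\liminf_n\ind_{W(t_n,x_n)}(z)\ge\ind_{W(t,x)}(z)$ for every $z\in\IR^{d+1}_+$, which holds because $(t_n/2,t_n)$ eventually contains any compact subinterval of $(t/2,t)$ and $B(x_n,t_n)$ eventually contains any compact subset of $B(x,t)$.

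Granting these, I would fix $(t_n,x_n)\to(t,x)$. For $r<\infty$, Fatou's lemma applied to the nonnegative functions $\ind_{W(t_n,x_n)}|f|^r$ gives $\liminf_n\int_{W(t_n,x_n)}|f|^r\ge\int\liminf_n\ind_{W(t_n,x_n)}|f|^r\ge\int_{W(t,x)}|f|^r$, and dividing by $|W(t_n,x_n)|\to|W(t,x)|\in(0,\infty)$ yields $\liminf_n F(t_n,x_n)^r\ge F(t,x)^r$. For $r=\infty$, inner regularity of Lebesgue measure gives $\esssup_{W(t,x)}|f|=\sup\{\esssup_K|f|:K\subset W(t,x)\ \text{compact}\}$ (any positive-measure subset of $W(t,x)$ contains a compact subset of positive measure); since each such $K$ lies in $W(t_n,x_n)$ for large $n$, we get $\liminf_n F(t_n,x_n)\ge\esssup_K|f|$, and taking the supremum over $K$ concludes. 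In both cases $F$ is lower semicontinuous, so the superlevel sets $\{F>\lambda\}$ are open.

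For the consequence I would apply the above to $\tilde f(s,y):=s^{-\beta}f(s,y)$, which again lies in $\L^0(\IR^{d+1}_+)$; hence $\widetilde F(t,x):=\big(\fint_{t/2}^t\fint_{B(x,t)}|\tilde f|^r\,\d y\,\d s\big)^{1/r}$ is lower semicontinuous on $\IR^{d+1}_+$ for every $r\le\infty$. Fixing $t$, the slice $x\mapsto\widetilde F(t,x)$ is lower semicontinuous on $\IR^d$, and a nonnegative lower semicontinuous function on $\IR^d$ satisfies $\esssup=\sup$ (its superlevel sets are open, and nonempty open sets have positive measure), which handles $p=\infty$. For $q=\infty$ I would then check that $t\mapsto\|\widetilde F(t,\cdot)\|_{\L^p}$ is lower semicontinuous on $(0,\infty)$: for $p<\infty$ this follows from Fatou's lemma in the $x$-variable applied to $\widetilde F(t_n,\cdot)^p$ together with the joint lower semicontinuity, and for $p=\infty$ from the fact that $t\mapsto\sup_x\widetilde F(t,x)$ is a supremum of the lower semicontinuous functions $t\mapsto\widetilde F(t,x)$. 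Since a nonnegative lower semicontinuous function on the open half-line $(0,\infty)$ also satisfies $\esssup=\sup$, the outer $\esssup$ may be replaced by $\sup$ as well.

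The whole argument is bookkeeping with Fatou's lemma and the scaling $|W(t,x)|=c_d\,t^{d+1}$; the only points that need a moment of care are the pointwise bound $\liminf_n\ind_{W(t_n,x_n)}\ge\ind_{W(t,x)}$, which relies on the openness of the Whitney box, and the inner-regularity description of the essential supremum used in the borderline case $r=\infty$. I do not expect a genuine obstacle.
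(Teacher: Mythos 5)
Your proof is correct but takes a genuinely different route from the paper's. The paper proves lower semicontinuity by directly showing that superlevel sets $\{F>M\}$ are open: given a point $(t,x)$ with $F(t,x)>M$, it uses continuity of Lebesgue measure to find an explicit $\varepsilon>0$ such that integrating over the shrunken box $(\tfrac{t}{2}+\tfrac{\varepsilon}{2},t-\varepsilon)\times B(x,t-\varepsilon)$, normalized by the enlarged measure, still exceeds $M^r$; it then observes that this shrunken box sits inside $W(\tilde t,\tilde x)$ for all $(\tilde t,\tilde x)$ in an explicit $\varepsilon$-ball. You instead use the sequential characterization of lower semicontinuity and Fatou's lemma, driven by the pointwise bound $\liminf_n\ind_{W(t_n,x_n)}\ge\ind_{W(t,x)}$ coming from openness of the Whitney box. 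Both rest on the same geometric input (shrunken boxes persist under small perturbations of $(t,x)$), but yours trades the explicit $\varepsilon$-bookkeeping for standard measure-theoretic machinery, which makes the $r<\infty$ case somewhat slicker; the paper's construction is more hands-on and immediately produces the open neighborhood. For the $r=\infty$ case the paper proceeds by contradiction via continuity of measure, while you invoke inner regularity directly; again both work. You also spell out the derivation of the $\esssup=\sup$ consequence (splitting into $p=\infty$ and $q=\infty$, using Fatou in $x$ for finite $p$ and the stability of lower semicontinuity under suprema for $p=\infty$), which the paper leaves to the reader; these details are correct and a welcome addition.
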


\begin{proof}
    Fix $M>0$ and assume that $F(t,x)>M$ for some $(t,x)\in\IR^{d+1}_+$. \\

    \noindent\textbf{Case 1: $0<r<\infty$}. By continuity of the Lebesgue measure, there exists $\varepsilon>0$ such that
    \begin{align*}
        \bigg(\frac{2}{(t+\varepsilon)|B(0,t+\varepsilon)|}\int\limits_{\frac{t}{2}+\frac{\varepsilon}{2}}^{t-\varepsilon} \int\limits_{B(x,t-\varepsilon)}|f(s,y)|^r\,\d y \d s\bigg)^\frac{1}{r} >M.
    \end{align*}
    Take $(\tilde t, \tilde x)\in\IR^{d+1}_+$ with $|x-\tilde x|<\varepsilon$ and $t-\varepsilon<\tilde t <t +\varepsilon$. Then, one has
    \begin{align*}
        \Big(\frac{t}{2}+\frac{\varepsilon}{2} , t-\varepsilon \Big)\times B(x, t-\varepsilon) \subset \Big(\frac{\tilde t}{2}, \tilde t \Big)\times B(x, \tilde t),
    \end{align*}
    which results in
    \begin{align*}
        \bigg(\frac{2}{(t+\varepsilon)|B(0,t+\varepsilon)|}\int\limits_{\frac{\tilde t}{2}}^{\tilde t} \int\limits_{B(\tilde x,\tilde t)}|f(s,y)|^r\,\d y \d s\bigg)^\frac{1}{r} >M.
    \end{align*}
    Finally, since $\tilde t <t + \varepsilon$, we get
    \begin{align*}
        F(\tilde t, \tilde x) = \bigg(\fint\limits_{\frac{\tilde t}{2}}^{\tilde t} \fint\limits_{B(\tilde x,\tilde t)}|f(s,y)|^r\,\d y \d s\bigg)^\frac{1}{r}> \bigg(\frac{2}{(t+\varepsilon)|B(0,t+\varepsilon)|}\int\limits_{\frac{\tilde t}{2}}^{\tilde t} \int\limits_{B(\tilde x,\tilde t)}|f(s,y)|^r\,\d y \d s\bigg)^\frac{1}{r} >M.
    \end{align*}
    This implies that $\{(t,x)\in\IR^{d+1}_+: F(t,x)>M\}$ is open. \\

    \noindent\textbf{Case 2: $r= \infty$}. We claim again that there exists $\varepsilon>0$ such that
    \begin{align*}
        \esssup_{\frac{t}{2}+\frac{\varepsilon}{2} <s<t - \varepsilon} \esssup_{y\in B(x,t-\varepsilon)} |f(s,y)| >M.
    \end{align*}
    Assume the contrary. Then, by continuity of the Lebesgue measure, it follows that
    \begin{align*}
        &|\{(s,y)\in W(t,x) : |f(s,y)|>M \}|\\
        &\quad= \lim\limits_{\varepsilon\to 0} \Big|\{(s,y)\in \Big(\frac{t}{2}+\frac{\varepsilon}{2}, t - \varepsilon\Big)\times B(x,t-\varepsilon) : |f(s,y)|>M \}\Big| =0.
    \end{align*}
    But this would imply $F(t,x) \leq M$,
    which is a contradiction. Now, the same procedure as in Case~1 implies the claim also for the case $r=\infty$.
\end{proof}

Another important property is the following quantitative \enquote{change of angle} formula. The name is motivated by the corresponding change of angle formula for tent spaces; see \cite{Auscher}.

\begin{lemma}
\label{lem: change of angle}
    For $\lambda >1$ we have
    \begin{align*}
         \bigg(\int\limits_{0}^\infty \bigg\|\bigg(\fint\limits_{\frac{t}{2}}^t t^{-d}\int\limits_{B(\cdot,\lambda t)} |s^{-\beta}f(s,y)|^r \, \d y \d s\bigg)^\frac{1}{r} \bigg\|_{\L^p}^q\,\frac{\d t}{t}\bigg)^\frac{1}{q} \lesssim \lambda^\frac{d}{\min\{p,r\}} \|f\|_{\Z^{p,q,r}_\beta},
    \end{align*}
    where the implicit constants depend only on $p,r$ and $d$.
\end{lemma}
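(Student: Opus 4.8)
The goal is to bound the left-hand side, in which the inner ball is enlarged from radius $t$ to radius $\lambda t$, by $\lambda^{d/\min\{p,r\}}$ times the $\Z^{p,q,r}_\beta$-norm. The natural strategy is to cover the enlarged ball $B(x,\lambda t)$ by a bounded-overlap family of balls of radius $t$, reduce the enlarged average to a sum (or supremum, depending on the size of $r$ and $p$) over this family, and then exploit translation invariance of the Lebesgue measure in the spatial variable together with the fact that the outer $t$-integral is unaffected by a spatial shift. Since $B(x,\lambda t)$ can be covered by $N \simeq \lambda^d$ balls $B(x_j, t)$ with $x_j \in B(x,\lambda t)$ and uniformly bounded overlap, we write
\begin{align*}
    \fint_{B(x,\lambda t)} |s^{-\beta} f(s,y)|^r \, \d y \lesssim \lambda^{-d} \sum_{j=1}^{N} \int_{B(x_j,t)} |s^{-\beta} f(s,y)|^r \, \d y \simeq \sum_{j=1}^{N} \fint_{B(x_j,t)} |s^{-\beta} f(s,y)|^r \, \d y,
\end{align*}
keeping in mind that $t^{-d} \int_{B(\cdot,\lambda t)}$ already carries the correct normalization so that the factor $\lambda^{-d}$ is what turns $\int$ into $\fint$ over a ball of radius $t$.

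**Reduction and case split.** After this covering step, the quantity to estimate is
\begin{align*}
    \Bigg( \int_0^\infty \Bigg\| \Bigg( \fint_{t/2}^{t} \sum_{j=1}^N \fint_{B(\cdot + c_j t, t)} |s^{-\beta} f(s,y)|^r \, \d y \, \d s \Bigg)^{1/r} \Bigg\|_{\L^p}^q \, \frac{\d t}{t} \Bigg)^{1/q},
\end{align*}
where $c_j t$ are the (scale-dependent) centers of the covering balls with $|c_j| \lesssim \lambda$. Now I split according to whether $r \le p$ or $r > p$, and similarly exploit the position of $\min\{1,q/r\}$ etc.\ when pushing sums through $(\,\cdot\,)^{1/r}$ and $\|\cdot\|_{\L^p}$. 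If $r \le p$: pull the sum over $j$ out of $(\,\cdot\,)^{1/r}$ using subadditivity of $t \mapsto t^{1/r}$ on the sum of $N$ terms (costing at most $N^{1/r}$... but more carefully, $(\sum a_j)^{1/r} \le \sum a_j^{1/r}$ when $1/r \ge 1$, i.e.\ $r \le 1$, so one must instead use $(\sum a_j)^{1/r} \le N^{1/r - 1}\sum a_j^{1/r}$ or, better, combine with the $\L^p$-step) and then use the quasi-triangle inequality in $\L^p$ together with translation invariance. The cleanest route: observe that for each fixed $j$, a change of variables $x \mapsto x - c_j t$ in the spatial integral defining $\|\cdot\|_{\L^p}$ is a measure-preserving translation, hence
\begin{align*}
    \Bigg\| \Bigg( \fint_{t/2}^t \fint_{B(\cdot + c_j t, t)} |s^{-\beta}f(s,y)|^r \, \d y \, \d s \Bigg)^{1/r} \Bigg\|_{\L^p} = \Bigg\| \Bigg( \fint_{t/2}^t \fint_{B(\cdot, t)} |s^{-\beta}f(s,y)|^r \, \d y \, \d s \Bigg)^{1/r} \Bigg\|_{\L^p},
\end{align*}
which, after integrating in $t$, is exactly $\|f\|_{\Z^{p,q,r}_\beta}$ for every $j$. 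Summing over the $N \simeq \lambda^d$ values of $j$ and carefully tracking the exponents — using $\ell^1 \hookrightarrow \ell^{\min\{1,r,p,q\}}$-type embeddings and the fact that raising a sum of $N$ equal terms to a power $\theta \in (0,1]$ costs $N^{\theta}$ while the number of terms itself costs $N$ — yields a total factor that works out to $N^{1/\min\{p,r\}} \simeq \lambda^{d/\min\{p,r\}}$.

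**Main obstacle.** The genuinely delicate point is bookkeeping the exponents so that the final power of $N$ is $1/\min\{p,r\}$ and not something larger like $1/r + 1/p$. This requires interchanging the sum over $j$ with the $\L^p$-quasinorm and with the $r$-th root at the \emph{right} moment: one should push the sum outward past whichever of the two operations ($(\cdot)^{1/r}$ or $\|\cdot\|_{\L^p}$) has the \emph{smaller} associated exponent, and absorb it there, rather than paying for it twice. Concretely, if $r \le p$ one keeps the sum inside $\L^p$ as long as possible and uses Minkowski in $\L^{p/r}$ (valid since $p/r \ge 1$) after raising to the power $r$; if $p < r$ one instead pulls the sum out immediately using the embedding $\ell^1 \hookrightarrow \ell^{r/\cdot}$ at the level of the $y$-integral. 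A second, minor subtlety is that the covering balls' centers depend on the scale $t$, but since the change of variables is performed at each fixed $t$ before integrating $\d t / t$, this causes no difficulty. The $\beta$-weight $s^{-\beta}$ and the inner $s$-average play no role beyond being carried along passively, as they only involve the variable $s \in (t/2, t)$ which is untouched by the spatial covering. Finally one notes that the argument is uniform in $\lambda$ and only uses elementary covering geometry plus the quasi-norm properties of $\L^p$ and $\ell^q$, so the implicit constants depend only on $p, r$ and $d$ as claimed.
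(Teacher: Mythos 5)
Your proposal is correct and takes essentially the same approach as the paper's proof: cover $B(x,\lambda t)$ by $\simeq\lambda^d$ balls of radius $t$, split into the cases $p<r$ (using $\bigl(\sum_j a_j\bigr)^{p/r}\leq\sum_j a_j^{p/r}$ since $p/r<1$) and $p\geq r$ (using Minkowski's inequality in $\L^{p/r}$), and in each case exploit translation invariance of the $\L^p$-norm at each fixed scale $t$ to recover $\|f\|_{\Z^{p,q,r}_\beta}$ with the factor $\lambda^{d/\min\{p,r\}}$. Your intermediate discussion wanders, but the ``Main obstacle'' paragraph correctly pins down the case split and the exact mechanism producing the exponent $1/\min\{p,r\}$, which is what the paper does.
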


\begin{proof}
    It is enough to look at $\beta = 0$. For notational convenience, we just consider $p,q,r$ finite. 
    By translation and dilation, there exist $J\subset \IN$ with $|J| \leq C\lambda^d$ for some constant $C>0$ depending only on $d$, and $(\xi_j)_{j\in J}\subset \IR^d$ such that for all $(t,x)\in\IR^{d+1}_+$, one has the covering by the balls
    \begin{align*}
        B(x,\lambda t) \subset \bigcup_{j\in J} B(x-t\xi_j, t).
    \end{align*}
    Then, we get
    \begin{align*}
        \fint\limits_{\frac{t}{2}}^t t^{-d}\int\limits_{B(x,\lambda t)} |f(s,y)|^r \, \d y \d s\lesssim \sum\limits_{j\in J} \fint\limits_{\frac{t}{2}}^t \fint\limits_{B(x-t\xi_j, t)} |f(s,y)|^r \, \d y \d s.
    \end{align*}
    If $p<r$ we have
    \begin{align*}
        &\bigg(\int\limits_{\IR^d}\bigg( \sum\limits_{j\in J} \fint\limits_{\frac{t}{2}}^t \fint\limits_{B(x-t\xi_j, t)} |f(s,y)|^r \, \d y \d s\bigg)^\frac{p}{r}\,\d x \bigg)^\frac{1}{p}\\
        &\leq \bigg(\sum\limits_{j\in J} \int\limits_{\IR^d}\bigg(  \fint\limits_{\frac{t}{2}}^t \fint\limits_{B(x-t\xi_j, t)} |f(s,y)|^r \, \d y \d s\bigg)^\frac{p}{r}\,\d x \bigg)^\frac{1}{p}\\
        &\leq C^{\frac{1}{p}}\lambda^\frac{d}{p}\bigg( \int\limits_{\IR^d}\bigg(  \fint\limits_{\frac{t}{2}}^t \fint\limits_{B(x, t)} |f(s,y)|^r \, \d y \d s\bigg)^\frac{p}{r}\,\d x \bigg)^\frac{1}{p}.
    \end{align*}
    On the contrary, if $p\geq r$ we use Minkowski's inequality to give
    \begin{align*}
         &\bigg(\int\limits_{\IR^d}\bigg( \sum\limits_{j\in J} \fint\limits_{\frac{t}{2}}^t \fint\limits_{B(x-t\xi_j, t)} |f(s,y)|^r \, \d y \d s\bigg)^\frac{p}{r}\,\d x \bigg)^\frac{1}{p}\\
         &= \Big\|  \sum\limits_{j\in J} \fint\limits_{\frac{t}{2}}^t \fint\limits_{B(\cdot-t\xi_j, t)} |f(s,y)|^r \, \d y \d s \Big\|_{\L^{\frac{p}{r}}}^\frac{1}{r}\\
         &\leq  \bigg(\sum\limits_{j\in J}   \Big\|\fint\limits_{\frac{t}{2}}^t \fint\limits_{B(\cdot-t\xi_j, t)} |f(s,y)|^r \, \d y \d s\Big\|_{\L^{\frac{p}{r}}} \bigg)^\frac{1}{r}\\
         &=  \bigg(\sum\limits_{j\in J}   \Big\|\fint\limits_{\frac{t}{2}}^t \fint\limits_{B(\cdot, t)} |f(s,y)|^r \, \d y \d s\Big\|_{\L^{\frac{p}{r}}} \bigg)^\frac{1}{r}\\
         &\leq C^{\frac{1}{r}}\lambda^\frac{d}{r}\bigg( \int\limits_{\IR^d}\bigg(  \fint\limits_{\frac{t}{2}}^t \fint\limits_{B(x, t)} |f(s,y)|^r \, \d y \d s\bigg)^\frac{p}{r}\,\d x \bigg)^\frac{1}{p}..
    \end{align*}
    Thus, both cases together yield the inequality of the claim upon applying the $\L^q$-norm in $t$.
\end{proof}

As announced earlier, we can now use Lemma~\ref{lem: change of angle} to show that the definition of the $\Z^{p,q,r}_\beta$-spaces is independent of the underlying Whitney box.

\begin{proposition}
\label{prop: independent Whitney cube}
    Let $0<p,q,r\leq \infty$ and $\beta\in\IR$. For $f\in \Z^{p,q,r}_\beta$, $0<a<b<\infty$ and $c>0$, we have
    \begin{align*}
        \bigg(\int\limits_0^\infty\bigg( \int\limits_{\IR^d}\bigg(  \fint\limits_{at}^{bt} \fint\limits_{B(x, ct)} |f(s,y)|^r \, \d y \d s\bigg)^\frac{p}{r}\,\d x \bigg)^\frac{q}{p}\,\frac{\d t}{t}\bigg)^\frac{1}{q}\simeq \|f\|_{\Z^{p,q,r}_\beta},
    \end{align*}
    where the implicit constants depend on $a,b,c$, $p$, $q$, $r$, and the dimension $d$.
\end{proposition}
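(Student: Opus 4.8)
The plan is to establish the equivalence by a covering argument: one first adjusts the time window $(at,bt)\leftrightarrow(\tfrac t2,t)$ by a finite covering, and then corrects the resulting change of spatial aperture by means of Lemma~\ref{lem: change of angle}. Throughout, write $N_{a,b,c}(f)$ for the quantity obtained by replacing the Whitney box $W(t,x)$ by $W_{a,b,c}(t,x)$ in the definition of $\|f\|_{\Z^{p,q,r}_\beta}$, so that $N_{1/2,1,1}(f)=\|f\|_{\Z^{p,q,r}_\beta}$ and the claim reads $N_{a,b,c}(f)\simeq N_{1/2,1,1}(f)$. First I would reduce to $\beta=0$: since $N_{a,b,c}(f)$ is precisely the $W_{a,b,c}$-version of $\|\cdot\|_{\Z^{p,q,r}_0}$ evaluated at the function $(s,y)\mapsto s^{-\beta}f(s,y)$, it is enough to prove box-independence for $\beta=0$. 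It then suffices to prove the one-sided bound $N_{a,b,c}(g)\lesssim N_{a',b',c'}(g)$ for arbitrary admissible triples $(a,b,c)$ and $(a',b',c')$, because applying this with $(a',b',c')=(\tfrac12,1,1)$ and then with $(a,b,c)=(\tfrac12,1,1)$ yields both inequalities.

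For the covering step, by scaling it is enough to cover the compact interval $[a,b]$ by dilates of $(a',b')$: pick finitely many $\kappa_1,\dots,\kappa_N\in[a/b',\,b/a']$ with $(a,b)\subset\bigcup_{j=1}^N(a'\kappa_j,b'\kappa_j)$, and set $\Lambda\coloneqq\max\{1,\,cb'/(c'a)\}$, so that $c\le\Lambda c'\kappa_j$ for every $j$. Then $W_{a,b,c}(t,x)\subset\bigcup_{j=1}^N W_{a',b',\Lambda c'}(\kappa_j t,x)$ for all $(t,x)\in\IR^{d+1}_+$, and since each of these $N$ boxes has measure comparable to $|W_{a,b,c}(t,x)|$ with constants depending only on the parameters and $d$, positivity of $|g|^r$ gives
\begin{align*}
    \fint\limits_{at}^{bt}\fint\limits_{B(x,ct)}|g(s,y)|^r\,\d y\,\d s\ \lesssim\ \sum_{j=1}^N\ \fint\limits_{a'\kappa_j t}^{b'\kappa_j t}\fint\limits_{B(x,\Lambda c'\kappa_j t)}|g(s,y)|^r\,\d y\,\d s.
\end{align*}
I would then apply $(\,\cdot\,)^{1/r}$, then $\|\cdot\|_{\L^p}$ in $x$, then $\|\cdot\|_{\L^q(\d t/t)}$, using at each step the (quasi-)triangle inequality for the $N$-term sum (which costs only a constant depending on $N$ and the exponents), and finally substitute $\tau=\kappa_j t$ in the $j$-th summand. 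Since $\d t/t=\d\tau/\tau$, each summand becomes exactly $N_{a',b',\Lambda c'}(g)$, and hence $N_{a,b,c}(g)\lesssim N\cdot N_{a',b',\Lambda c'}(g)$.

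It remains to bring the enlarged aperture $\Lambda c'$ back down to $c'$, which is precisely a change of angle. The proof of Lemma~\ref{lem: change of angle} — covering the dilated ball by $\lesssim\Lambda^d$ balls of the base radius centred at shifts and then passing the resulting finite sum through the $\L^p$-norm, splitting into the cases $p<r$ and $p\ge r$ — does not interact with the time average, so it applies verbatim with the base radius $t$ replaced by $c't$ and the window $(\tfrac t2,t)$ replaced by $(a't,b't)$; it yields $N_{a',b',\Lambda c'}(g)\lesssim N_{a',b',c'}(g)$ with a constant depending on $\Lambda$, $d$, $p$ and $r$. Combining the last two estimates gives $N_{a,b,c}(g)\lesssim N_{a',b',c'}(g)$, which proves the proposition. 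The cases in which one or more of $p,q,r$ equal $\infty$ are handled identically, with the $\esssup$ in place of the corresponding integral, since the finite union above and the dilation $\tau=\kappa_j t$ interact well with suprema. I expect the only point requiring a word of justification to be that the change-of-angle estimate survives the replacement of the time window; everything else is a matter of tracking the finitely many comparability constants.
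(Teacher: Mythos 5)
Your proof is correct and follows essentially the paper's own approach: cover the time interval $(at,bt)$ by finitely many dilates of the reference interval, pull the resulting finite sum through the $(\cdot)^{1/r}$, $\L^p$ and $\L^q(\d t/t)$ norms, substitute in each summand, and then apply the change-of-angle estimate of Lemma~\ref{lem: change of angle} to bring the enlarged spatial aperture back down (the paper uses the dyadic family $(2^{k-1}t,2^kt)$ where you use dilates $(a'\kappa_jt,b'\kappa_jt)$, but this is cosmetic). Your write-up is in fact slightly more complete: by proving $N_{a,b,c}\lesssim N_{a',b',c'}$ for arbitrary admissible triples and observing that the change-of-angle argument is insensitive to the choice of time window $(a't,b't)$, you also supply the reverse inequality, which the paper dispatches with the remark that it ``follows analogously.''
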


\begin{proof}
    For simplicity we only consider $p,q,r$ finite. Moreover, we only provide the direction \enquote{$\lesssim$}. The reverse direction follows analogously. Let $n,m\in\IZ$ be such that $2^m<a<b<2^n$. Fix $(t,x)\in\IR^{d+1}_+$. Then, estimate for the average integral on the left-hand side
    \begin{align*}
         \fint\limits_{at}^{bt} \fint\limits_{B(x, ct)} |f(s,y)|^r \, \d y \d s\lesssim  \sum\limits_{k=m+1}^{n} \, \fint\limits_{2^{k-1}t}^{2^{k}t} \fint\limits_{B(x, ct)} |f(s,y)|^r \, \d y \d s.
    \end{align*}
    Using finiteness of the sum, we can estimate
    \begin{align*}
        &\bigg(\int\limits_0^\infty\bigg( \int\limits_{\IR^d}\bigg(  \fint\limits_{at}^{bt} \fint\limits_{B(x, ct)} |f(s,y)|^r \, \d y \d s\bigg)^\frac{p}{r}\,\d x \bigg)^\frac{q}{p}\,\frac{\d t}{t}\bigg)^\frac{1}{q}\\
        &\lesssim  \bigg(\int\limits_0^\infty\bigg( \int\limits_{\IR^d}\bigg(\sum\limits_{k=m+1}^{n} \, \fint\limits_{2^{k-1}t}^{2^{k}t} \fint\limits_{B(x, ct)} |f(s,y)|^r \, \d y \d s\bigg)^\frac{p}{r}\,\d x \bigg)^\frac{q}{p}\,\frac{\d t}{t}\bigg)^\frac{1}{q}\\
        &\lesssim  \sum\limits_{k=m+1}^{n} \bigg(\int\limits_0^\infty\bigg( \int\limits_{\IR^d}\bigg( \fint\limits_{2^{k-1}t}^{2^{k}t} \fint\limits_{B(x, ct)} |f(s,y)|^r \, \d y \d s\bigg)^\frac{p}{r}\,\d x \bigg)^\frac{q}{p}\,\frac{\d t}{t}\bigg)^\frac{1}{q}.
    \intertext{By the substitution $2^kt = \tau$, we obtain}
        &=  \sum\limits_{k=m+1}^{n} \bigg(\int\limits_0^\infty\bigg( \int\limits_{\IR^d}\bigg( \fint\limits_{\frac{\tau}{2}}^{\tau} \fint\limits_{B(x, \frac{c}{2^k}\tau)} |f(s,y)|^r \, \d y \d s\bigg)^\frac{p}{r}\,\d x \bigg)^\frac{q}{p}\,\frac{\d \tau}{\tau}\bigg)^\frac{1}{q}\\
        &\simeq  \sum\limits_{k=m+1}^{n}  \Big(\frac{c}{2^k}\Big)^{-\frac{d}{r}}\bigg(\int\limits_0^\infty\bigg( \int\limits_{\IR^d}\bigg( \fint\limits_{\frac{\tau}{2}}^{\tau} \tau^{-d}\int\limits_{B(x, \frac{c}{2^k}\tau)} |f(s,y)|^r \, \d y \d s\bigg)^\frac{p}{r}\,\d x \bigg)^\frac{q}{p}\,\frac{\d \tau}{\tau}\bigg)^\frac{1}{q}
    \intertext{Finally, using Lemma~\ref{lem: change of angle} in the case $\frac{c}{2^k}> 1$ or $B(x, \frac{c}{2^k}\tau)\subset B(x, \tau)$ when $\frac{c}{2^k}\leq 1$, we end up with}
        &\lesssim  \sum\limits_{k=m+1}^{n} \Big(\frac{c}{2^k}\Big)^{-\frac{d}{r}} \min\Big\{1 , \Big(\frac{c}{2^k}\Big)^{\frac{d}{\min\{p,r\}}}\Big\}\|f\|_{\Z^{p,q,r}_\beta}\\
        &\lesssim \|f\|_{\Z^{p,q,r}_\beta},
    \end{align*}
    where the implicit constant depends only on $p,q,r,d$ as well as $a,b,c$.
\end{proof}

\begin{lemma}
\label{lem: compare Z with Lr}
    Let $0<p,q,r\leq \infty$, $\beta\in \IR$ and $K\subset \IR^{d+1}_+$ be compact. Then, for a measurable function $f$ on $\IR^{d+1}_+$ we have
    \begin{align*}
        \|\mathbf{1}_K f \|_{\Z^{p,q,r}_\beta} \simeq \|f\|_{\L^{r}\big(K,\frac{\d y \d s}{s}\big)},
    \end{align*}
    where the implicit constants depend on $p,q,r,\beta,d$ and $K$.
\end{lemma}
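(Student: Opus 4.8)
The plan is to reduce the statement, up to multiplicative constants depending only on $K$ (and on $p,q,r,\beta,d$), to comparing $\iint_K|f|^r\,\d y\,\d s$ with a truncated version of the $\Z$-quasinorm. Since $K$ is compact, one may fix $0<t_-\le t_+<\infty$ and $R>0$ with $K\subseteq[t_-,t_+]\times\overline{B(0,R)}$. On the part of $\IR^{d+1}_+$ that is actually relevant, every weight and normalising factor occurring in the definition of $\Z^{p,q,r}_\beta$ — namely $s^{-\beta}$, $t^{-\beta}$, $t^{-d}$, $|B(\cdot,t)|^{-1}$ and $1/s$ — is comparable to a positive constant depending only on $K$. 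In particular $\|f\|_{\L^r(K,\frac{\d y\,\d s}{s})}\simeq\big(\iint_K|f|^r\,\d y\,\d s\big)^{1/r}=:\|f\|_{\L^r(K)}$, and I would freely discard all such factors throughout.

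For the inequality $\lesssim$, observe that the Whitney average $\fint_{t/2}^t\fint_{B(x,t)}|s^{-\beta}\mathbf{1}_Kf|^r$ vanishes unless $W(t,x)$ meets $K$, which forces $t\in(t_-,2t_+)$ and $x\in B(0,2t_++R)$. On this range the average is $\lesssim_K\iint_K|f|^r$, so the inner function of $x$ is pointwise dominated by $\mathbf{1}_{B(0,2t_++R)}\,\|f\|_{\L^r(K)}$; taking the $\L^p$-norm in $x$ over a set of finite measure and then the $\L^q(\tfrac{\d t}{t})$-norm over the bounded interval $(t_-,2t_+)$ yields $\|\mathbf{1}_Kf\|_{\Z^{p,q,r}_\beta}\lesssim_K\|f\|_{\L^r(K)}$, with the usual modifications when $p$ or $q$ is infinite.

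For the reverse inequality I would first decompose $K$. Slicing $[t_-,t_+]$ into finitely many intervals of the form $[\tfrac{2}{3}\tau_i,\tau_i]$ and covering $\overline{B(0,R)}$ by finitely many sufficiently small balls, then intersecting, produces a finite cover $K=\bigcup_{i=1}^{N}K_i$ by measurable sets, with $N$ depending only on $K$, such that each $K_i$ is contained in a set $[\tfrac{2}{3}\tau_i,\tau_i]\times B(x_i,\tfrac{\tau_i}{2})$. Because the $\Z$-quasinorm is monotone under $|f_1|\le|f_2|$ and $|\mathbf{1}_{K_i}f|\le|\mathbf{1}_Kf|$, it is enough to prove $\|\mathbf{1}_{K_i}f\|_{\Z^{p,q,r}_\beta}\gtrsim_K\|f\|_{\L^r(K_i)}$ for each $i$: the claim then follows from $\max_i a_i\ge N^{-1/r}\big(\sum_i a_i^r\big)^{1/r}$ (an equality, with $\max$, when $r=\infty$) together with $\sum_i\|f\|_{\L^r(K_i)}^r\ge\|f\|_{\L^r(K)}^r$.

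It then remains to treat a single piece $K'\subseteq[\tfrac{2}{3}\tau,\tau]\times B(x_0,\tfrac{\tau}{2})$. A direct check shows that for every $t\in[\tau,\tfrac{4}{3}\tau)$ and every $x\in B(x_0,\tfrac{\tau}{2})$ one has $W(t,x)\supseteq(\tfrac{2}{3}\tau,\tau)\times B(x_0,\tfrac{\tau}{2})\supseteq K'$ up to a null set, whence $\fint_{t/2}^t\fint_{B(x,t)}|s^{-\beta}\mathbf{1}_{K'}f|^r\gtrsim_K\iint_{K'}|f|^r$. Taking the $\L^p$-norm in $x$ over the fixed-measure ball $B(x_0,\tfrac{\tau}{2})$ and then the $\L^q(\tfrac{\d t}{t})$-norm over the fixed-measure interval $[\tau,\tfrac{4}{3}\tau)$ gives $\|\mathbf{1}_{K'}f\|_{\Z^{p,q,r}_\beta}\gtrsim_K\|f\|_{\L^r(K')}$, as needed. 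I expect the only genuinely delicate point to be the bookkeeping in this decomposition: the slicing in the time variable must use a ratio strictly below $2$ so that each slice fits inside a single Whitney box $W(t,x)$ for a whole interval of values of $t$ rather than for one value only, and the pieces must then be reassembled through monotonicity of the quasinorm. Everything else is a matter of comparing constants.
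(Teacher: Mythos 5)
Your proof is correct, and the overall strategy — compare to the $\L^r$-norm on both sides via a finite covering argument adapted to the Whitney geometry, using dependence of the constants on $K$ — matches the paper's. The upper bound is treated essentially identically: one observes the Whitney average of $\mathbf{1}_K f$ vanishes outside a fixed bounded region of $(t,x)$, and then crude estimates finish. For the lower bound, however, your organization is genuinely different and slightly cleaner. The paper covers the spatial projection of $K$ by balls $B(x_j,a/2)$, introduces an $\L^p$-average, then slices time dyadically, introduces an $\L^q$-average, lands on the Whitney box $(t/4,t)\times B(x,t)$, and finally invokes the change-of-Whitney-parameters result (Proposition~\ref{prop: independent Whitney cube}) to pass to $(t/2,t)\times B(x,t)$. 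You instead decompose $K$ upfront into finitely many pieces $K_i\subset[\tfrac{2}{3}\tau_i,\tau_i]\times B(x_i,\tfrac{\tau_i}{2})$ that are Whitney-adapted by construction, verify directly that $W(t,x)\supset K_i$ for $t\in[\tau_i,\tfrac{4}{3}\tau_i)$ and $x\in B(x_i,\tfrac{\tau_i}{2})$, and reassemble by monotonicity of the quasinorm together with $\max_i a_i \ge N^{-1/r}(\sum_i a_i^r)^{1/r}$. This avoids Proposition~\ref{prop: independent Whitney cube} altogether; the paper prefers to reuse that proposition since it is needed elsewhere. Your identification of the genuine delicacy — that the time-slice ratio must be strictly less than $2$ so that each slice sits inside $W(t,x)$ for a whole interval of $t$ rather than a single value — is exactly right, and the details (including the measure-zero issue at the slice endpoints and the case $r=\infty$) are handled correctly.
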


\begin{proof}
    Using dependence of the implicit constants on $\beta$ and $K$, we can without loss of generality reduce to the case $\beta = 0$. Moreover, to simplify the notation, we only consider $p,q,r$ finite.
    Since $K$ is compact, there exist $0<a<b<\infty$, $R>0$ and $x_0\in \IR^{d}$ such that $K\subset (a,b)\times B(x_0,R)$.\\

    \noindent\textbf{Step 1: the direction \enquote{$\lesssim$}}. One has
    \begin{align*}
        &\bigg(\int\limits_{0}^\infty \bigg(\int\limits_{\IR^d}\bigg(\fint\limits_{\frac{t}{2}}^t \fint\limits_{B(x,t)} |\mathbf{1}_K(s,y)f(s,y)|^r \, \d y \d s\bigg)^\frac{p}{r}\,\d x \bigg)^\frac{q}{p}\,\frac{\d t}{t}\bigg)^\frac{1}{q}\\
        &\lesssim_{a,b,r} \bigg(\int\limits_{0}^\infty \bigg(\int\limits_{\IR^d}\bigg(\int\limits_{\frac{t}{2}}^t \int\limits_{B(x,t)} |\mathbf{1}_K(s,y)f(s,y)|^r \,\frac{\d y \d s}{s}\bigg)^\frac{p}{r}\,\d x \bigg)^\frac{q}{p}\,\frac{\d t}{t}\bigg)^\frac{1}{q}.
    \end{align*}
    Due to the observation
    \begin{align*}
        \mathbf{1}_{(t/2,t)\times B(x,t)}(s,y) \mathbf{1}_{(a,b)\times B(x_0,R)}(s,y)\leq \mathbf{1}_{(a,2b)}(t) \mathbf{1}_{B(x_0,2b+R)}(x),
    \end{align*}
    we end up with
    \begin{align*}
        &\bigg(\int\limits_{0}^\infty \bigg(\int\limits_{\IR^d}\bigg(\int\limits_{\frac{t}{2}}^t \int\limits_{B(x,t)} |\mathbf{1}_K(s,y)f(s,y)|^r \, \frac{\d y \d s}{s}\bigg)^\frac{p}{r}\,\d x \bigg)^\frac{q}{p}\,\frac{\d t}{t}\bigg)^\frac{1}{q} \\
        &\leq \bigg(\int\limits_{a}^{2b}\bigg(\int\limits_{B(x_0,2b+R)} \| f\|_{\L^r\big(K,\frac{\d y \d s}{s}\big)}^p\,\d x \bigg)^\frac{q}{p}\,\frac{\d t}{t}\bigg)^\frac{1}{q}\\
        &\lesssim_{a,b,R,p,q}  \|f\|_{\L^r\big(K,\frac{\d y \d s}{s}\big)}.
    \end{align*}

    \noindent \textbf{Step 2: the reverse inequality \enquote{$\gtrsim$}}. We estimate
    \begin{align*}
        \|f\|_{\L^r\big(K,\frac{\d y \d s}{s}\big)} &\lesssim_{a,b,r,d} \bigg(\int\limits_{a}^{b} \int\limits_{B(x_0,R)} |(\mathbf{1}_Kf)(s,y)|^r \,\frac{\d y \d s}{s^{d+1}}\bigg)^\frac{1}{r}\\
        &\leq \sum\limits_{j\in J}\bigg(\int\limits_{a}^{b} \int\limits_{B(x_j,\frac{a}{2})} |(\mathbf{1}_Kf)(s,y)|^r \,\frac{\d y \d s}{s^{d+1}}\bigg)^\frac{1}{r},
    \end{align*}
    where $(x_j)_{j\in J}\subset \IR^d$ for some finite set $J\subset \IN$ and $|J|$ depends only on $d,a,R$. An averaging argument yields
    \begin{align*}
        \sum\limits_{j\in J}\bigg(\int\limits_{a}^{b} \int\limits_{B(x_j,\frac{a}{2})} |(\mathbf{1}_Kf)(s,y)|^r \,\frac{\d y \d s}{s^{d+1}}\bigg)^\frac{1}{r}
        &= \sum\limits_{j\in J}\bigg(\fint\limits_{B(x_j,\frac{a}{2})}\bigg(\int\limits_{a}^{b} \int\limits_{B(x_j,\frac{a}{2})} |(\mathbf{1}_Kf)(s,y)|^r \,\frac{\d y \d s}{s^{d+1}}\bigg)^\frac{p}{r}\,\d x\bigg)^\frac{1}{p}.
    \end{align*}
    Now, for $x,y\in B(x_j,\frac{a}{2})$, one has $|x-y|<a$. Therefore,
    \begin{align*}
        &\sum\limits_{j\in J}\bigg(\fint\limits_{B(x_j,\frac{a}{2})}\bigg(\int\limits_{a}^{b} \int\limits_{B(x_j,\frac{a}{2})} |(\mathbf{1}_Kf)(s,y)|^r \,\frac{\d y \d s}{s^{d+1}}\bigg)^\frac{p}{r}\,\d x\bigg)^\frac{1}{p}\\
        &\leq \sum\limits_{j\in J}\bigg(\fint\limits_{B(x_j,\frac{a}{2})}\bigg(\int\limits_{a}^{b} \int\limits_{B(x,a)} |(\mathbf{1}_Kf)(s,y)|^r \,\frac{\d y \d s}{s^{d+1}}\bigg)^\frac{p}{r}\,\d x\bigg)^\frac{1}{p}\\
        &\lesssim_{a,R,d,p} \bigg(\int\limits_{\IR^d}\bigg(\int\limits_{a}^{b} \int\limits_{B(x,a)} |(\mathbf{1}_Kf)(s,y)|^r \,\frac{\d y \d s}{s^{d+1}}\bigg)^\frac{p}{r}\,\d x\bigg)^\frac{1}{p}.
    \end{align*}
    To introduce in addition a time integral, we do a similar trick. Let $N\in\IN$ be such that $b<2^Na$. Then,
    \begin{align*}
        &\bigg(\int\limits_{\IR^d}\bigg(\int\limits_{a}^{b} \int\limits_{B(x,a)} |(\mathbf{1}_Kf)(s,y)|^r \,\frac{\d y \d s}{s^{d+1}}\bigg)^\frac{p}{r}\,\d x\bigg)^\frac{1}{p}\\
        &\lesssim\sum\limits_{l=0}^{N-1} \bigg(\int\limits_{\IR^d}\bigg(\int\limits_{2^la}^{2^{l+1}a} \int\limits_{B(x,a)} |(\mathbf{1}_Kf)(s,y)|^r \,\frac{\d y \d s}{s^{d+1}}\bigg)^\frac{p}{r}\,\d x\bigg)^\frac{1}{p}\\
        &\lesssim \sum\limits_{l=0}^{N-1} \bigg(\fint\limits_{2^{l+1}a}^{2^{l+2}a}\bigg(\int\limits_{\IR^d}\bigg(\int\limits_{2^la}^{2^{l+1}a} \int\limits_{B(x,a)} |(\mathbf{1}_Kf)(s,y)|^r \,\frac{\d y \d s}{s^{d+1}}\bigg)^\frac{p}{r}\,\d x\bigg)^\frac{q}{p}\,\d t\bigg)^\frac{1}{q}\\
        &\lesssim\sum\limits_{l=0}^{N-1} \bigg(\int\limits_{2^{l+1}a}^{2^{l+2}a}\bigg(\int\limits_{\IR^d}\bigg(\int\limits_{\frac{t}{4}}^{t} \int\limits_{B(x,t)} |(\mathbf{1}_Kf)(s,y)|^r \,\frac{\d y \d s}{s^{d+1}}\bigg)^\frac{p}{r}\,\d x\bigg)^\frac{q}{p}\,\frac{\d t}{t}\bigg)^\frac{1}{q}\\
        &\lesssim  \bigg(\int\limits_0^\infty\bigg(\int\limits_{\IR^d}\bigg(\fint\limits_{\frac{t}{4}}^{t} \fint\limits_{B(x,t)} |(\mathbf{1}_Kf)(s,y)|^r \,\d y \d s\bigg)^\frac{p}{r}\,\d x\bigg)^\frac{q}{p}\,\frac{\d t}{t}\bigg)^\frac{1}{q}.
    \end{align*}
    Since the $\Z$-space (quasi-)norms  with respect to different Whitney boxes are equivalent, see Proposition~\ref{prop: independent Whitney cube}, the claim follows.
\end{proof}

\begin{proposition}
\label{prop: complete and dense subspace}
    The space $\Z^{p,q,r}_\beta$ is a (quasi-)Banach space. Moreover, the space of compactly supported functions in $\L^r(\IR^{d+1}_+)$ is a dense subspace of $\Z^{p,q,r}_\beta$ if $\max\{p,q,r\}<\infty$.
\end{proposition}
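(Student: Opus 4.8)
The plan is to first record that $\|\cdot\|_{\Z^{p,q,r}_\beta}$ is a genuine quasi-norm and then to reduce both assertions to Lemma~\ref{lem: compare Z with Lr}, which compares $\Z$-spaces with Lebesgue spaces on compact subsets of $\IR^{d+1}_+$. Homogeneity of $\|\cdot\|_{\Z^{p,q,r}_\beta}$ is clear, and $\|f\|_{\Z^{p,q,r}_\beta}=0$ forces $f=0$ a.e., because $\IR^{d+1}_+$ is covered by countably many Whitney boxes on each of which $f$ must vanish. The quasi-triangle inequality is obtained by composing the quasi-triangle inequalities of $\L^r(W(t,x))$, $\L^p(\IR^d)$ and $\L^q((0,\infty),\tfrac{\d t}{t})$, with a constant depending only on $p,q,r$ (and equal to $1$ when $p,q,r\geq1$). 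I will also use the elementary facts that $|f|\leq|g|$ a.e.\ implies $\|f\|_{\Z^{p,q,r}_\beta}\leq\|g\|_{\Z^{p,q,r}_\beta}$, so that $\ind_E$ is a contraction on $\Z^{p,q,r}_\beta$ for every measurable $E\subset\IR^{d+1}_+$, and the Fatou-type lower semicontinuity $\|f\|_{\Z^{p,q,r}_\beta}\leq\liminf_n\|f_n\|_{\Z^{p,q,r}_\beta}$ whenever $f_n\to f$ a.e.\ on $\IR^{d+1}_+$; the latter is Fatou's lemma applied successively in the three integrations defining the (quasi-)norm.

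For completeness, start from a Cauchy sequence $(f_n)_n$ in $\Z^{p,q,r}_\beta$. For any compact $K\subset\IR^{d+1}_+$ the sequence $(\ind_K f_n)_n$ is Cauchy in $\Z^{p,q,r}_\beta$ (since $\ind_K$ is a contraction), hence Cauchy in $\L^r(K,\tfrac{\d y\d s}{s})$ by Lemma~\ref{lem: compare Z with Lr}; on a compact subset of $\IR^{d+1}_+$ the weight $s^{-1}$ is bounded above and below, so this amounts to being Cauchy in $\L^r(K)$. Exhausting $\IR^{d+1}_+$ by compact sets $K_1\subset K_2\subset\cdots$ and gluing the corresponding $\L^r$-limits, I obtain $f\in\L^r_{\mathrm{loc}}(\IR^{d+1}_+)$ with $\ind_{K_j}f_n\to\ind_{K_j}f$ in $\L^r(K_j)$ for each $j$; passing to a subsequence, $f_{n_k}\to f$ a.e.\ on $\IR^{d+1}_+$. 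Applying the lower semicontinuity above to $f_{n_k}-f_n\to f-f_n$ gives, for each fixed $n$, $\|f-f_n\|_{\Z^{p,q,r}_\beta}\leq\liminf_k\|f_{n_k}-f_n\|_{\Z^{p,q,r}_\beta}$; combining this with the Cauchy property yields $f_n\to f$ in $\Z^{p,q,r}_\beta$ and $f\in\Z^{p,q,r}_\beta$. Hence $\Z^{p,q,r}_\beta$ is a (quasi-)Banach space for all $0<p,q,r\leq\infty$.

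For density, assume $\max\{p,q,r\}<\infty$. A compactly supported $g\in\L^r(\IR^{d+1}_+)$ belongs to $\Z^{p,q,r}_\beta$: with $K=\supp g$ one has $g=\ind_K g$ and Lemma~\ref{lem: compare Z with Lr} bounds $\|g\|_{\Z^{p,q,r}_\beta}$ by $\|g\|_{\L^r(K,\tfrac{\d y\d s}{s})}\simeq\|g\|_{\L^r(K)}<\infty$. Conversely, given $f\in\Z^{p,q,r}_\beta$, set $F(t,x):=\big(\fint_{t/2}^{t}\fint_{B(x,t)}|s^{-\beta}f(s,y)|^r\,\d y\d s\big)^{1/r}$, so that $F(t,x)<\infty$ for a.e.\ $(t,x)$, $\|F(t,\cdot)\|_{\L^p}<\infty$ for a.e.\ $t$, and $\int_0^\infty\|F(t,\cdot)\|_{\L^p}^q\tfrac{\d t}{t}=\|f\|_{\Z^{p,q,r}_\beta}^q<\infty$. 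I will approximate $f$ by the truncations $f_N:=\ind_{K_N}f$ with $K_N:=[\tfrac1N,N]\times\overline{B(0,N)}$: these are compactly supported and lie in $\L^r(\IR^{d+1}_+)$, since $\int_{K_N}|f|^r\,\d y\d s\leq N\int_{K_N}|f|^r\,\tfrac{\d y\d s}{s}\lesssim N\,\|\ind_{K_N}f\|_{\Z^{p,q,r}_\beta}^r<\infty$ by Lemma~\ref{lem: compare Z with Lr}. To prove $\|f-f_N\|_{\Z^{p,q,r}_\beta}\to0$, let $F_N$ be the analogue of $F$ with $f$ replaced by $f-f_N=\ind_{\IR^{d+1}_+\setminus K_N}f$. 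Since $K_N\uparrow\IR^{d+1}_+$, dominated convergence on the Whitney box $W(t,x)$ — using $r<\infty$ and integrability of the dominating function $\ind_{W(t,x)}(s,y)\,|s^{-\beta}f(s,y)|^r$ for a.e.\ $(t,x)$ — gives $F_N(t,x)\to0$ for a.e.\ $(t,x)$, with $F_N\leq F$ throughout; dominated convergence in $x$ ($p<\infty$, dominant $F(t,\cdot)^p$) gives $\|F_N(t,\cdot)\|_{\L^p}\to0$ for a.e.\ $t$, with $\|F_N(t,\cdot)\|_{\L^p}^q\leq\|F(t,\cdot)\|_{\L^p}^q$; and dominated convergence in $t$ ($q<\infty$) gives $\|f-f_N\|_{\Z^{p,q,r}_\beta}^q=\int_0^\infty\|F_N(t,\cdot)\|_{\L^p}^q\tfrac{\d t}{t}\to0$.

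The argument contains no genuinely hard step: the substance is already in Lemma~\ref{lem: compare Z with Lr}, which converts local questions about $\Z$-spaces into ordinary $\L^r$-questions. The only point demanding care is the bookkeeping in the nested Fatou and dominated-convergence applications, namely verifying at each of the three layers that the relevant dominating function is integrable — which is in every instance exactly the hypothesis $f\in\Z^{p,q,r}_\beta$ (respectively, the a.e.\ finiteness of its Whitney average).
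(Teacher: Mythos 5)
Your proof is correct and follows precisely the route the paper indicates: the paper's own proof merely defers to Amenta's Proposition~3.5 together with Lemma~\ref{lem: compare Z with Lr}, and your argument is that argument spelled out in detail — local comparison with $\L^r$ via the lemma, a Fatou-type lower semicontinuity of the quasi-norm under a.e.\ convergence for completeness, and truncation by $K_N=[\tfrac1N,N]\times\overline{B(0,N)}$ with nested dominated convergence for density. The bookkeeping you flag (the diagonal subsequence across the compact exhaustion, the integrable dominating function at each of the three layers, and the modification of the Fatou step when an exponent is $\infty$) is all handled correctly.
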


\begin{proof}
    Using Lemma~\ref{lem: compare Z with Lr}, the proof is the same as for~\cite[Proposition 3.5]{Amenta} and will be omitted.
\end{proof}

\subsection{Duality: Banach range}
\label{subsec: duality >1}

A very useful tool to analyze our spaces is an embedding into vector-valued Lebesgue spaces; see \cites{Amenta, HTV} for the same approach for tent spaces. More precisely, denote by $\L^q_{t;\beta}\L^p_{\vphantom{t,\beta}x}\L^r_{\vphantom{t,\beta}s,y}$ the vector-valued Lebesgue space
\begin{align*}
    \L^q\bigg((0,\infty), \frac{\d t}{t^{1+\beta q}} ; \L^p\Big(\IR^d; \L^r\Big(\IR^{d+1}_+; \frac{\d y \d s}{s^{d+1}}\Big) \Big) \bigg).
\end{align*}
Then, we have the following lemma.

\begin{lemma}
\label{lem: vv-embedding}
    The linear mapping
    \begin{align*}
        i : {\Z^{p,q,r}_\beta } \to \L^q_{t;\beta}\L^p_{\vphantom{t,\beta}x}\L^r_{\vphantom{t,\beta}s,y},\quad f(s,y)\mapsto \mathbf{1}_{W(t,x)}(s,y)f(s,y)
    \end{align*}
    satisfies $\|i(f)\|_{\L^q_{t;\beta}\L^p_{\vphantom{t,\beta}x}\L^r_{\vphantom{t,\beta}s,y}}\simeq \|f\|_{{\Z^{p,q,r}_\beta }}$. In particular, it is bounded, injective and has closed range.
\end{lemma}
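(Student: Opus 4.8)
The plan is to compute $\|i(f)\|_{\L^q_{t;\beta}\L^p_{x}\L^r_{s,y}}$ directly from the definition of the iterated norm and recognize it, up to harmless constants coming from the explicit Whitney box $W(t,x) = (t/2,t)\times B(x,t)$, as $\|f\|_{\Z^{p,q,r}_\beta}$. Unwinding the outermost two norms, we must show
\begin{align*}
    \|i(f)\|_{\L^q_{t;\beta}\L^p_{x}\L^r_{s,y}}
    = \bigg(\int\limits_0^\infty \frac{1}{t^{\beta q}}\bigg\| \bigg(\int\limits_{W(t,\cdot)} |f(s,y)|^r\,\frac{\d y\, \d s}{s^{d+1}}\bigg)^{\!1/r}\bigg\|_{\L^p}^q\,\frac{\d t}{t}\bigg)^{\!1/q},
\end{align*}
so the whole content is to compare the inner integral $\int_{W(t,x)} |f(s,y)|^r\,\frac{\d y\, \d s}{s^{d+1}}$ against the Whitney average $\fint_{t/2}^t \fint_{B(x,t)} |s^{-\beta}f(s,y)|^r\,\d y\,\d s$ appearing in the definition of $\|f\|_{\Z^{p,q,r}_\beta}$. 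On $W(t,x)$ one has $s\simeq t$ and $|B(x,t)| \simeq t^d$, hence $\frac{1}{s^{d+1}} \simeq \frac{1}{t^{d+1}} \simeq \frac{1}{t}\cdot\frac{1}{|B(0,t)|}$, with comparability constants depending only on $d$; moreover $s^{-\beta r}$ can be absorbed into the factor $t^{-\beta q}$ out front after taking roots and $\L^p$, $\L^q$ norms, again at the cost of a constant depending on $\beta$ and $d$ only. Combining these two comparisons shows the displayed quantity is $\simeq \|f\|_{\Z^{p,q,r}_\beta}$, with the usual modifications if any of $p,q,r$ is infinite (in which case the relevant integral is replaced by an essential supremum and the same pointwise comparisons apply). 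This is the norm equivalence $\|i(f)\|_{\L^q_{t;\beta}\L^p_{x}\L^r_{s,y}}\simeq \|f\|_{\Z^{p,q,r}_\beta}$.

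From this equivalence everything else is formal. Linearity of $i$ is clear since $f\mapsto \mathbf{1}_{W(t,x)}f$ is linear. Boundedness is the estimate $\|i(f)\|\lesssim \|f\|_{\Z^{p,q,r}_\beta}$, which is one half of the equivalence. Injectivity: if $i(f)=0$ then $\mathbf{1}_{W(t,x)}(s,y)f(s,y)=0$ for a.e.\ $(t,x)$ and a.e.\ $(s,y)$; since every point of $\IR^{d+1}_+$ lies in $W(t,x)$ for a set of $(t,x)$ of positive measure, Fubini forces $f=0$ a.e. (Alternatively, injectivity is immediate from $\|f\|_{\Z^{p,q,r}_\beta}\lesssim \|i(f)\|$, which shows $i(f)=0 \Rightarrow \|f\|_{\Z^{p,q,r}_\beta}=0 \Rightarrow f=0$ in $\L^0(\IR^{d+1}_+)$.) Closed range: the two-sided estimate means $i$ is a topological embedding onto its image, and since $\Z^{p,q,r}_\beta$ is complete by Proposition~\ref{prop: complete and dense subspace}, the image $i(\Z^{p,q,r}_\beta)$ is a complete, hence closed, subspace of the (quasi-)Banach space $\L^q_{t;\beta}\L^p_{x}\L^r_{s,y}$.

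There is no real obstacle here; the only point requiring minor care is bookkeeping the comparability constants (from $s\simeq t$, $|B(x,t)|\simeq t^d$, and the $s^{-\beta}$ versus $t^{-\beta}$ swap) uniformly in $(t,x)$ so that they survive the successive $\L^r$, $\L^p$, $\L^q$ integrations, and handling the cases where one or more of $p,q,r$ equals $\infty$ by the stated $\esssup$ modifications. One should also note in passing that $i(f)$ is a well-defined element of $\L^q_{t;\beta}\L^p_{x}\L^r_{s,y}$, i.e.\ the map $(t,x)\mapsto \mathbf{1}_{W(t,x)}f$ is strongly measurable with values in $\L^r(\IR^{d+1}_+;\frac{\d y\,\d s}{s^{d+1}})$ — this follows from joint measurability of $(t,x,s,y)\mapsto \mathbf{1}_{W(t,x)}(s,y)f(s,y)$ and Fubini.
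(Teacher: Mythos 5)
Your proposal is correct and follows the same approach as the paper, which simply asserts (with the words \enquote{it is clear that}) that the iterated norm $\|i(f)\|_{\L^q_{t;\beta}\L^p_{x}\L^r_{s,y}}$ is comparable to $\|f\|_{\Z^{p,q,r}_\beta}$ because on the Whitney box $s\simeq t$ and $|B(x,t)|\simeq t^d$, and then invokes Proposition~\ref{prop: complete and dense subspace} for completeness to conclude closed range. You have filled in exactly this computation — identifying the measure $\frac{\d y\,\d s}{s^{d+1}}$ with the Whitney-average normalization $\frac{1}{t\,|B(x,t)|}$ and trading $s^{-\beta}$ for $t^{-\beta}$ — plus the routine measurability and injectivity remarks, so no further comment is needed.
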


\begin{proof}
    It is clear that
    \begin{align*}
        \|i(f)\|_{\L^q_{t;\beta}\L^p_{\vphantom{t,\beta}x}\L^r_{\vphantom{t,\beta}s,y}}
        \simeq \bigg(\int\limits_{0}^\infty \bigg(\int\limits_{\IR^d}\bigg(\fint\limits_{\frac{t}{2}}^t \fint\limits_{B(x,t)} |s^{-\beta}f(s,y)|^r \, \d y \d s\bigg)^\frac{p}{r}\,\d x \bigg)^\frac{q}{p}\,\frac{\d t}{t}\bigg)^\frac{1}{q}= \|f\|_{{\Z^{p,q,r}_\beta }}.
    \end{align*}
    Therefore, the map is bounded, injective and has closed range due to Proposition~\ref{prop: complete and dense subspace}.
\end{proof}

\begin{theorem}[Duality -- Banach range]
\label{thm: duality banach range}
    Let $1\leq p,q,r<\infty$ and $\beta\in \IR$. Then, we have
    \begin{align*}
        \int\limits_0^\infty \int\limits_{\IR^d} |f(s,y)| \cdot |g(s,y)| \,\frac{\d y \d s}{s} \lesssim  \|f\|_{\Z^{\vphantom{p',q',r'}p,q,r}_{\beta} } \cdot \|g\|_{\Z^{p',q',r'}_{-\beta} }
    \end{align*}
    for any measurable functions $f,g$. Moreover, we can identify $(\Z^{p,q,r}_\beta)' \simeq \Z^{p',q',r'}_{-\beta}$ with equivalent norms via the $\L^2$ duality pairing.
\end{theorem}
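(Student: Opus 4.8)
The plan is to prove the two statements separately. The pairing inequality comes first; it immediately gives the inclusion $\Z^{p',q',r'}_{-\beta}\hookrightarrow(\Z^{p,q,r}_\beta)'$ via the pairing $\langle f,g\rangle:=\int_0^\infty\!\int_{\IR^d} f(s,y)\,\overline{g(s,y)}\,\d y\,\d s/s$. The reverse inclusion $(\Z^{p,q,r}_\beta)'\hookrightarrow\Z^{p',q',r'}_{-\beta}$ is then obtained by transporting a functional through the vector-valued embedding $i$ of Lemma~\ref{lem: vv-embedding}, extending by Hahn--Banach, and averaging the resulting kernel back down to $\IR^{d+1}_+$.

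For the pairing inequality I would start from the elementary Fubini identity
\[
    \int_0^\infty\!\!\int_{\IR^d} h(s,y)\,\frac{\d y\,\d s}{s}=\int_0^\infty\!\!\int_{\IR^d}\fint_{\frac t2}^{t}\fint_{B(x,t)} h(s,y)\,\d y\,\d s\,\frac{\d x\,\d t}{t}\qquad(h\geq 0),
\]
which follows after interchanging the order of integration and using $\int_{s}^{2s}t^{-2}\,\d t=(2s)^{-1}$. Applying it with $h=|f|\,|g|$ (everything nonnegative, so this is valid for arbitrary measurable $f,g$, both sides possibly infinite), inserting $1=s^{-\beta}\cdot s^{\beta}$ inside the inner average, and invoking Hölder's inequality three times --- with exponents $(r,r')$ against the probability measure $\fint_{t/2}^{t}\fint_{B(x,t)}\d y\,\d s$ (which produces exactly the inner $\L^r$- and $\L^{r'}$-averages defining the two $\Z$-norms), then with $(p,p')$ in the $x$-variable, then with $(q,q')$ against $\d t/t$ --- bounds the right-hand side by $\|f\|_{\Z^{p,q,r}_\beta}\,\|g\|_{\Z^{p',q',r'}_{-\beta}}$, the $\esssup$-version of Hölder covering the cases where a conjugate exponent equals $\infty$. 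Hence $g\mapsto\langle\cdot,g\rangle$ maps $\Z^{p',q',r'}_{-\beta}$ boundedly into $(\Z^{p,q,r}_\beta)'$, and it is injective because a $g$ pairing to $0$ against every $\mathbf{1}_K f$ with $K$ a Whitney box --- these lie in $\Z^{p,q,r}_\beta$ with $\|\mathbf{1}_K f\|_{\Z^{p,q,r}_\beta}\simeq\|f\|_{\L^r(K,\d y\,\d s/s)}$ by Lemma~\ref{lem: compare Z with Lr} --- must vanish a.e.

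For the converse, fix $\ell\in(\Z^{p,q,r}_\beta)'$ and write $E:=\L^q_{t;\beta}\L^p_x\L^r_{s,y}$. By Lemma~\ref{lem: vv-embedding}, $i$ is an isomorphism of $\Z^{p,q,r}_\beta$ onto a closed subspace of $E$, so $\ell\circ i^{-1}$ is bounded on $i(\Z^{p,q,r}_\beta)$ and extends by Hahn--Banach to $L\in E'$ with $\|L\|_{E'}\lesssim\|\ell\|$. Since $1\le p,q,r<\infty$, the anti-dual of this weighted mixed-norm Lebesgue space is the conjugate space $E'=\L^{q'}((0,\infty),\tfrac{\d t}{t^{1+\beta q}};\L^{p'}(\IR^d;\L^{r'}(\IR^{d+1}_+,\tfrac{\d y\,\d s}{s^{d+1}})))$ under the natural integral pairing, a classical fact about mixed-norm Lebesgue spaces. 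Represent $L$ by a kernel $G\in E'$; since only the restriction of $G$ to $\{(t,x,s,y):(s,y)\in W(t,x)\}$ enters the pairing with the functions $i(f)$, we may assume $G$ is supported there, without enlarging $\|G\|_{E'}$. Then set
\[
    g(s,y):=s^{-d}\int_0^\infty\!\!\int_{\IR^d}\mathbf{1}_{W(t,x)}(s,y)\,G(t,x,s,y)\,\frac{\d x\,\d t}{t^{1+\beta q}};
\]
by Fubini this choice is made precisely so that $\ell(f)=L(i(f))=\langle f,g\rangle$ for every compactly supported $f\in\L^r(\IR^{d+1}_+)$, and hence --- by density (Proposition~\ref{prop: complete and dense subspace}) together with the pairing inequality just proved --- for all $f\in\Z^{p,q,r}_\beta$.

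What remains, and is the main obstacle, is the estimate $\|g\|_{\Z^{p',q',r'}_{-\beta}}\lesssim\|G\|_{E'}$. On the support of $G$ one has $t\simeq s$, so the defining formula gives the pointwise bound $|s^{\beta}g(s,y)|\lesssim s^{\beta(1-q)}\fint_{s}^{2s}\fint_{B(y,2s)}|G(t,x,s,y)|\,\d x\,\d t$; Jensen's inequality ($r'\geq1$) turns the inner average of $|G|$ into one of $|G|^{r'}$, and since $\tfrac1q+\tfrac1{q'}=1$ one has $q'(1-q)=-q$, so the induced weight satisfies $\tau^{\beta(1-q)q'}\,\d\tau/\tau=\d\tau/\tau^{1+\beta q}$, which is exactly the $t$-weight of $E'$. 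Feeding this into the definition of $\|g\|_{\Z^{p',q',r'}_{-\beta}}$ leaves a four-fold iterated average of $|G|^{r'}$ over a region in which $s$, $t$, $\tau$ and the two spatial radii are mutually comparable; interchanging these averages --- by Minkowski's inequality when $p'\geq r'$ and by Jensen's inequality when $p'<r'$ --- and absorbing the resulting bounded changes of aperture through Proposition~\ref{prop: independent Whitney cube} (equivalently Lemma~\ref{lem: change of angle}) bounds this quantity by $\|G\|_{E'}^{q'}$. Combined with $\|G\|_{E'}\lesssim\|\ell\|$ this yields $\|g\|_{\Z^{p',q',r'}_{-\beta}}\lesssim\|\ell\|$, and together with the first inclusion and the injectivity established above we conclude $(\Z^{p,q,r}_\beta)'\simeq\Z^{p',q',r'}_{-\beta}$ with equivalent norms. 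This last interchange-of-averages step, the single place where the geometry of Whitney boxes genuinely enters, is the $\Z$-space analogue of the corresponding computation for tent spaces in \cite{Amenta}, and is the step I expect to require the most care.
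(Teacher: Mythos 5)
Your overall architecture coincides with the paper's: the pairing inequality via the Fubini/Whitney identity plus three Hölder applications, then the converse via Hahn--Banach through the embedding $i$, a kernel representation in the mixed-norm space $E'$, and an averaging-down estimate. The pairing inequality, the injectivity remark, and the reduction to $\|g\|_{\Z^{p',q',r'}_{-\beta}}\lesssim\|G\|_{E'}$ are all sound, and your convention of using the weighted pairing for $E$ (so $E'$ carries the same weight) is a clean alternative to the paper's unweighted-pairing computation.

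However, the last step --- the one you flag as delicate --- contains a genuine gap. After the pointwise bound for $s^\beta g$ you propose to apply \emph{Jensen first}, turning $\bigl(\fint\!\fint|G|\bigr)^{r'}$ into $\fint\!\fint|G|^{r'}$, and then to interchange the resulting $(t,x)$-average with the $\L^{p'}_z$-norm, ``by Minkowski when $p'\geq r'$ and by Jensen when $p'<r'$.'' The first case is fine; the second is not. Once $|G|^{r'}$ sits inside, the quantity you must bound in $\L^{p'}_z$ is
\[
\Bigl(\fint_{\tau/2}^{2\tau}\fint_{B(z,c\tau)}H(t,x)^{r'}\,\d x\,\d t\Bigr)^{1/r'},
\qquad H(t,x):=\|G(t,x,\cdot,\cdot)\|_{\L^{r'}(\d y\,\d s/s^{d+1})},
\]
and you need $\bigl\|(\fint\!\fint H^{r'})^{1/r'}\bigr\|_{\L^{p'}_z}\lesssim\fint_{\tau/2}^{2\tau}\|H(t,\cdot)\|_{\L^{p'}}\,\d t$. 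When $p'<r'$ (i.e.\ $p>r$, which is permitted by the hypotheses), Jensen with the concave power $x\mapsto x^{p'/r'}$ produces a \emph{lower} bound $\int(\fint\!\fint H^{r'})^{p'/r'}\d z\geq\int\fint\!\fint H^{p'}\,\d x\,\d t\,\d z$, not an upper bound, and in fact the desired inequality is false: taking $H(t,x)\simeq\epsilon^{-d/p'}\mathbf{1}_{B(0,\epsilon)}(x)$ so that $\|H(t,\cdot)\|_{\L^{p'}}=1$ gives $\|(\fint\!\fint H^{r'})^{1/r'}\|_{\L^{p'}_z}^{p'}\simeq(\tau/\epsilon)^{d(1-p'/r')}\to\infty$ as $\epsilon\to 0$. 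The cure is not a change of aperture (Proposition~\ref{prop: independent Whitney cube} and Lemma~\ref{lem: change of angle} rescale constants, they cannot repair a convexity defect); it is to apply \emph{Minkowski's integral inequality at the $\L^{r'}$ level before} Jensen is ever invoked, i.e.\ pull the $(t,x)$-average out of the Whitney $\L^{r'}$-average,
\[
\Bigl(\fint_{W(\tau,z)}\Bigl|\fint\!\fint|G|\Bigr|^{r'}\Bigr)^{1/r'}
\lesssim
\fint_{\tau/2}^{2\tau}\fint_{B(z,c\tau)}\Bigl(\fint_{W(\tau,z)}|G(t,x,s,y)|^{r'}\,\d y\,\d s\Bigr)^{1/r'}\d x\,\d t,
\]
so that what feeds into the $\L^{p'}_z$- and $\L^{q'}_\tau$-norms is the \emph{first power} of the Whitney $\L^{r'}$-quantity. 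Then only Jensen with exponents $p'\geq 1$ and $q'\geq 1$ is needed (no $p'$ versus $r'$ case split), and the computation closes. This is exactly how the paper proceeds, and it is the only place your plan diverges from a correct argument.
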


\begin{proof}
    Let $f$ and $g$ be measurable functions. Then,
    \begin{align*}
        \int\limits_0^\infty \int\limits_{\IR^d} |f(s,y)| \cdot |g(s,y)| \,\frac{\d y \d s}{s} &= \int\limits_0^\infty \int\limits_{\IR^d} \fint\limits_{s}^{2s}\fint\limits_{B(y,s)} |f(s,y)| \cdot |g(s,y)| \, \d x \d t\,\frac{\d y \d s}{s}\\
        &\lesssim \int\limits_0^\infty \int\limits_{\IR^d} \fint\limits_{\frac{t}{2}}^{t}\fint\limits_{B(x,t)} |s^{-\beta}f(s,y)| \cdot |s^{\beta}g(s,y)| \, \d y \d s\,\frac{\d x \d t}{t}\\
        &\leq \|f\|_{\Z^{\vphantom{p',q',r'}p,q,r}_\beta } \cdot \|g\|_{\Z^{p',q',r'}_{-\beta} },
    \end{align*}
    where we used Hölder's inequality three times in the last step. In particular, $\Z^{p',q',r'}_{-\beta} \subset ({\Z^{p,q,r}_\beta })'$. For the reverse inclusion, pick $L \in ({\Z^{p,q,r}_\beta })'$. Recall the mapping $i$ from Lemma~\ref{lem: vv-embedding} and let $\Rg(i)$ denote its range. Then, we have
    \begin{align*}
        L\circ i^{-1} \in \cL (\Rg(i), \IC),
    \end{align*}
     which can be extended to some functional on $\L^q_{t;\beta}\L^p_{\vphantom{t,\beta}x}\L^r_{\vphantom{t,\beta}s,y}$. Let $\tilde L$ denote this extension and identify it with $g\in \L^{q'}_{t;\beta}\L^{p'}_{\vphantom{t,\beta}x}\L^{r'}_{\vphantom{t,\beta}s,y}$ by duality of weighted vector-valued Lebesgue spaces. Then, for all $f \in \Z^{p,q,r}_{\beta}$ one has
    \begin{align*}
        L(f) = L\circ i^{-1}(i(f)) &= \int\limits_0^\infty\int\limits_{\IR^d}\int\limits_0^\infty\int\limits_{\IR^d} g(\tau,z,s,y) \cdot \overline{i(f)(\tau,z,s,y)}\,\frac{\d y \d s}{s^{d+1}} \,\frac{\d z \d \tau}{\tau}\\
        &=\int\limits_0^\infty \int\limits_{\IR^d}\int\limits_0^\infty\int\limits_{\IR^d} g(\tau,z,s,y) \cdot \overline{\mathbf{1}_{W(\tau,z)}(s,y)f(s,y)}\,\frac{\d y \d s}{s^{d+1}} \,\frac{\d z \d \tau}{\tau}\\
        &=\int\limits_0^\infty \int\limits_{\IR^d}\bigg(\int\limits_s^{2s}\int\limits_{B(y,\tau)}  g(\tau,z,s,y) \,\frac{\d z \d \tau}{\tau s^{d}}\bigg)\cdot \overline{f(s,y)}\,\frac{\d y \d s}{s}.
    \end{align*}
    Observe that
    \begin{align*}
        \bigg|\int\limits_s^{2s}\int\limits_{B(y,\tau)} g(\tau,z,s,y) \,\frac{\d z \d \tau}{\tau s^{d}}\bigg|\lesssim \fint\limits_s^{2s}\fint\limits_{B(y,\tau)} |g(\tau,z,s,y)| \,\d z \d \tau\eqqcolon G(s,y).
    \end{align*}
    Hence, it suffices to show that $G \in \Z^{p',q',r'}_{-\beta}$. Indeed, if $G \in \Z^{p',q',r'}_{-\beta}$ then we can identify $L$ with a $\Z^{p',q',r'}_{-\beta}$-function and thus get the reverse inclusion $({\Z^{p,q,r}_\beta })'\subset \Z^{p',q',r'}_{-\beta}$. To prove $G \in \Z^{p',q',r'}_{-\beta}$, we fix $(t,x)\in \IR^{d+1}_+$. For $\frac{t}{2}< s <t$ and $s<\tau <2s$ we have $(s,2s) \times B(y,\tau) \subset (\frac{t}{2}, 2t) \times B(x,3t)$ for all $y\in B(x,t)$. This, together with Minkowski's integral inequality, yields
    \begin{align*}
        \bigg(\fint\limits_{\frac{t}{2}}^{t} \fint\limits_{B(x,t)} |s^{\beta}G(s,y)|^{r'} \,\d y\d s \bigg)^\frac{1}{{r'}} &\lesssim \bigg(\fint\limits_{\frac{t}{2}}^{t} \fint\limits_{B(x,t)} \bigg|s^{\beta}\fint\limits_{\frac{t}{2}}^{2t} \fint\limits_{B(x,3t)} |g(\tau, z, s, y)| \, \d z \d \tau\bigg|^{r'} \,\d y\d s \bigg)^\frac{1}{r'}\\
        &\lesssim  \fint\limits_{\frac{t}{2}}^{2t} \fint\limits_{B(x,3t)}\bigg(\fint\limits_{\frac{t}{2}}^{t} \fint\limits_{B(x,t)} |\tau^\beta g(\tau, z, s, y)|^{r'} \,\d y\d s  \,\bigg)^\frac{1}{r'} \d z \d \tau\\
        &\lesssim \fint\limits_{\frac{t}{2}}^{2t} \fint\limits_{B(x,3t)}\bigg(\fint\limits_{\frac{\tau}{4}}^{2\tau} \fint\limits_{B(z,8\tau)} |\tau^\beta g(\tau, z, s, y)|^{r'} \,\d y\d s  \,\bigg)^\frac{1}{r'} \d z \d \tau\\
        &= \fint\limits_{\frac{t}{2}}^{2t} \fint\limits_{B(x,3t)} h(\tau,z)\, \d z \d \tau,
    \end{align*}
    where
    \begin{align}
        \label{eq:duality_def_h}
        h(\tau,z) \coloneqq \bigg(\fint\limits_{\frac{\tau}{4}}^{2\tau} \fint\limits_{B(z,8\tau)} |\tau^\beta g(\tau, z, s, y)|^{r'} \,\d y\d s  \,\bigg)^\frac{1}{r'}.
    \end{align}
    Taking the $p'$-th power and integrating both sides in $x$ in the last estimate yields
    \begin{align*}
        \int\limits_{\IR^d}\bigg(\fint\limits_{\frac{t}{2}}^{t} \fint\limits_{B(x,t)} |s^\beta G(s,y)|^{r'} \,\d y\d s \bigg)^\frac{p'}{r'}\,\d x &\lesssim \int\limits_{\IR^d} \bigg|\fint\limits_{\frac{t}{2}}^{2t} \fint\limits_{B(x,3t)} h(\tau,z) \,\d z \d \tau\bigg|^{p'}\, \d x.
    \end{align*}
    Using Fubini's theorem, Jensen's inequality and an averaging trick gives us
    \begin{align*}
        \int\limits_{\IR^d}\bigg(\fint\limits_{\frac{t}{2}}^{t} \fint\limits_{B(x,t)} |s^\beta G(s,y)|^{r'} \,\d y\d s \bigg)^\frac{p'}{r'}\,\d x \lesssim \int\limits_{\IR^d}  \fint\limits_{B(x,3t)} \bigg|\fint\limits_{\frac{t}{2}}^{2t}h(\tau,z)\, \d \tau\bigg|^{p'} \d z  \d x
        &=\int\limits_{\IR^d} \bigg|\fint\limits_{\frac{t}{2}}^{2t}h(\tau,z)\, \d \tau\bigg|^{p'} \d z .
    \end{align*}
    Taking the $p'$-th root and taking $\L^{q'}$-norms in $t$ on both sides yields
    \begin{align*}
        \|G\|_{\Z^{p',q',r'}_{-\beta}} &\lesssim \bigg(\int\limits_0^\infty \bigg\| \fint\limits_{\frac{t}{2}}^{2t}h(\tau,\cdot)\, \d \tau \bigg\|_{\L^{p'}}^{q'}\,\frac{\d t}{t}\bigg)^\frac{1}{q'}.
    \end{align*}
    Again, Minkowski's integral inequality, Jensen's inequality and an averaging trick gives us
    \begin{align*}
        \|G\|_{\Z^{p',q',r'}_{-\beta}}
        \lesssim \bigg(\int\limits_0^\infty \bigg( \fint\limits_{\frac{t}{2}}^{2t} \|h(\tau,\cdot)\|_{\L^{p'}} \d \tau \bigg)^{q'}\,\frac{\d t}{t}\bigg)^\frac{1}{q'}
        &\leq \bigg(\int\limits_0^\infty \fint\limits_{\frac{t}{2}}^{2t} \|h(\tau,\cdot)\|^{q'}_{\L^{p'}} \, \d \tau \,\frac{\d t}{t}\bigg)^\frac{1}{q'}\\
        &\simeq  \bigg(\int\limits_0^\infty  \|h(\tau,\cdot)\|^{q'}_{\L^{p'}}  \,\frac{\d \tau}{\tau}\bigg)^\frac{1}{q'}.
    \end{align*}
    Finally, by the definition of $h$ in~\eqref{eq:duality_def_h} we get
    \begin{align*}
        \|G\|_{\Z^{p',q',r'}_{-\beta}}
        &\lesssim \bigg(\int\limits_0^\infty  \bigg\|\bigg(\fint\limits_{\frac{\tau}{4}}^{2\tau} \fint\limits_{B(z,8\tau)} |\tau^\beta g(\tau, z, s, y)|^{r'} \,\d y\d s  \,\bigg)^\frac{1}{r'} \bigg\|^{q'}_{\L^{p'}}  \,\frac{\d \tau}{\tau}\bigg)^\frac{1}{q'}\\
        &\lesssim \bigg(\int\limits_0^\infty  \bigg\|\bigg(\int\limits_{0}^\infty \int\limits_{\IR^d} |\tau^\beta g(\tau, z, s, y)|^{r'} \,\frac{\d y\d s}{s^{d+1}}  \,\bigg)^\frac{1}{r'} \bigg\|^{q'}_{\L^{p'}}  \,\frac{\d \tau}{\tau}\bigg)^\frac{1}{q'},
    \end{align*}
    which is bounded by the definition of $g$.
\end{proof}

\subsection{Dyadic description and embeddings}
\label{subsec: dyad and emb}
In this section, we give a dyadic characterization of our $\Z^{p,q,r}_\beta$-spaces. This can be seen as an extension of the dyadic characterization for the $\Z^{p,r}_\beta$-spaces of Amenta; see \cites{Amenta2, AA} and Remark~\ref{rem: comparison of known spaces}.

For $k\in \IZ$ we define the set of all dyadic half-open cubes of generation $k$ as
\begin{align*}
    \square_k \coloneqq \{ 2^kx + [0,2^k)^d : x\in \IZ^d\},
\end{align*}
and denote by $\square = \cup_{k\in\IZ} \square_k$ the family of all dyadic cubes. If $Q\in\square_k$ we let $\ell(Q) = 2^k$. Moreover, we denote by
\begin{align*}
    \bar Q \coloneqq \Big[\frac{\ell(Q)}{2} , \ell(Q) \Big) \times Q
\end{align*}
the associated Whitney box of a cube $Q$. Finally, set
\begin{align*}
    G \coloneqq \{ \bar Q : Q\in \square \},
\end{align*}
which is the collection of all Whitney boxes associated to $\square$. It forms a partition of $\IR^{d+1}_+$. Lastly, we define the set
\begin{align*}
    G(\bar Q) \coloneqq \{ \bar R \in G : \bar R \cap W(t,x) \neq \emptyset \ \text{for some} \ (t,x)\in \bar Q \},
\end{align*}
which contains only finitely many elements due to \cite[Lemma 2.19]{AA}. More precisely,the number of elements, denoted by $|G(\bar Q)|$, is bounded by a constant depending only on the dimension $d$.

\begin{proposition}
\label{prop: dyadic char}
    Let $0<p,q,r\leq \infty$ and $\beta\in \IR$. Then, for $f \in \Z^{p,q,r}_\beta$ there holds
    \begin{align*}
        \|f\|_{\Z^{p,q,r}_\beta} \simeq \Big( \sum\limits_{k\in\IZ} 2^{-k\beta q} \Big(\sum\limits_{Q\in \square_k} \ell(Q)^d \|f\|_{\L^r(\bar Q , \frac{\d y \d s}{s^{d+1}})}^p \Big)^\frac{q}{p} \Big)^\frac{1}{q}
    \end{align*}
    with implicit constants depending only on $d,p,q,r,\beta$.
\end{proposition}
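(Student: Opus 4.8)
The plan is to compare the continuous $\Z^{p,q,r}_\beta$-norm with its dyadic analogue by matching each dyadic Whitney box $\bar Q$ with $Q \in \square_k$ to the "continuous" scale $t \simeq \ell(Q) = 2^k$, and then exploiting the fact that averages over $W(t,x)$ and averages over the $\bar R \in G(\bar Q)$ are comparable up to finitely many boxes. The key observation is that for $(t,x) \in \bar Q$ one has $t \simeq \ell(Q)$, so $s^{-\beta} \simeq \ell(Q)^{-\beta} \simeq 2^{-k\beta}$ on the relevant ranges; hence it suffices (after pulling out the weight) to treat the case $\beta = 0$ and compare
\begin{align*}
    \Big(\int_0^\infty \Big\| \Big(\fint_{t/2}^t \fint_{B(\cdot,t)} |f|^r\Big)^{1/r}\Big\|_{\L^p}^q \, \frac{\d t}{t}\Big)^{1/q}
    \quad\text{with}\quad
    \Big(\sum_{k\in\IZ} \Big(\sum_{Q\in\square_k} \ell(Q)^d \|f\|_{\L^r(\bar Q, \d y\,\d s/s^{d+1})}^p\Big)^{q/p}\Big)^{1/q}.
\end{align*}

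First I would prove "$\gtrsim$", i.e.\ that the dyadic quantity dominates the continuous norm. Fix $t > 0$ and let $k \in \IZ$ be such that $2^k \le t < 2^{k+1}$; then for any $x \in \IR^d$ the Whitney box $W(t,x)$ is contained in a bounded union of the boxes $\bar R$ with $R \in \square_{k'}$, $k' \in \{k-c, \dots, k+c\}$, that meet it, and there are only $O(1)$ such $R$ by the finiteness of $|G(\bar Q)|$ (\cite[Lemma 2.19]{AA}). Thus $\fint_{t/2}^t \fint_{B(x,t)} |f|^r \lesssim \ell(Q)^{-d} \sum_{\bar R} \|f\|^r_{\L^r(\bar R, \d y\,\d s/s^{d+1})}$, where $Q\in\square_k$ contains $x$ (here I use $s \simeq t \simeq 2^k$ on the Whitney box to replace $\d y\,\d s$ by $\d y\,\d s/s^{d+1}$ up to constants). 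Taking $\L^p$-norms in $x$ — splitting the $x$-integral over the cubes $Q\in\square_k$ and using the $O(1)$-bounded overlap of the dilated boxes — gives, for $t \in [2^k, 2^{k+1})$,
\begin{align*}
    \Big\|\Big(\fint_{t/2}^t\fint_{B(\cdot,t)}|f|^r\Big)^{1/r}\Big\|_{\L^p}^p \lesssim \sum_{Q\in\square_k} \ell(Q)^d \sum_{\bar R \in G(\bar Q)} \|f\|_{\L^r(\bar R,\d y\,\d s/s^{d+1})}^p,
\end{align*}
with the $p/r$ versus $1$ comparison handled by quasi-subadditivity of $x\mapsto x^{p/r}$ when $p<r$ and by Minkowski when $p\ge r$, exactly as in Lemma~\ref{lem: change of angle}. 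Integrating $\d t/t$ over $[2^k,2^{k+1})$ and summing the $\ell^{q/p}$-quasi-norms over $k$ — absorbing the finite reindexing $\bar R \leftrightarrow \bar Q$ through the bounded overlap — yields the continuous norm $\lesssim$ the dyadic one (after reinstating the weight $2^{-k\beta}\simeq t^{-\beta}$).

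For the reverse "$\lesssim$" I would run essentially the same geometric comparison in the other direction: for $Q \in \square_k$ and any $(t,x) \in \bar Q$, the box $\bar Q$ itself is contained in a fixed dilate $W_{a,b,c}(t,x)$ of the Whitney box at $(t,x)$ (with $a,b,c$ absolute), so $\ell(Q)^{-d}\|f\|^r_{\L^r(\bar Q,\d y\,\d s/s^{d+1})} \lesssim \fint_{at}^{bt}\fint_{B(x,ct)}|f|^r$ for every $(t,x)\in\bar Q$; averaging this over $(t,x) \in \bar Q$ and taking $\L^p(Q)$ then $\ell^p$ over $Q\in\square_k$ bounds each dyadic layer by the continuous integrand at the matching scales, and integrating $\d t/t$ over a fixed dilate of $[2^k,2^{k+1})$ costs only a constant. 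Summing in $k$ with the $\ell^q$-quasi-norm and invoking Proposition~\ref{prop: independent Whitney cube} to revert from the enlarged Whitney parameters $(a,b,c)$ to the standard box $W(t,x)$ completes the estimate. The main obstacle is purely bookkeeping: keeping the quasi-norm constants under control through the repeated use of quasi-subadditivity (when $\min\{p,q,r\}<1$) and ensuring the finitely-many-box overlaps are handled uniformly in $k$; none of the steps is deep, but the indices and the switching between $\d y\,\d s$ and $\d y\,\d s/s^{d+1}$ (legitimate since $s\simeq 2^k$ on each layer) must be tracked carefully, and one must remember to treat the endpoint cases $p=\infty$, $q=\infty$ and/or $r=\infty$ by the usual $\esssup$ replacements — made harmless here by the lower semicontinuity lemma and the fact that $G(\bar Q)$ is genuinely finite.
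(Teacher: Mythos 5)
Your proposal is correct and takes essentially the same route as the paper's proof: match scales $t\simeq 2^k$, cover $W(t,x)$ by the finitely many boxes of $G(\bar Q)$ in one direction and enclose $\bar Q$ in a fixed dilate $W_{a,b,c}(t,x)$ in the other, exploit $s\simeq 2^k$ to switch between $\d y\,\d s$ and $\d y\,\d s/s^{d+1}$, pull out the finite sum over $\bar R$ by (quasi-)subadditivity, and invoke Proposition~\ref{prop: independent Whitney cube} to revert from the enlarged Whitney parameters. The only cosmetic difference is your appeal to Lemma~\ref{lem: change of angle} for the $p/r$ versus $1$ comparison, which the paper handles directly by noting that $|G(\bar Q)|$ is bounded by a dimensional constant.
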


\begin{proof}
    The proof follows the ideas of \cite[Prop.\@ 2.20]{AA}. To simplify notation, we concentrate on the case $p,q,r < \infty$.

    \noindent\textbf{Step 1: We show \enquote{$\lesssim$}.}
    We decompose the $\L^q$ and $\L^p$ integrals dyadically to get
    \begin{align*}
        \|f\|_{\Z^{p,q,r}_\beta}^q &\simeq \int\limits_0^\infty \bigg(\int\limits_{\IR^d} \bigg( \int\limits_{\frac{t}{2}}^t \int\limits_{B(x,t)} |s^{-\beta} f(s,y) |^r \, \frac{\d y \d s}{s^{d+1}} \bigg)^\frac{p}{r} \, \d x \bigg)^\frac{q}{p} \frac{\d t}{t}\\
        &= \sum\limits_{k\in \IZ} \, \int\limits_{2^{k-1}}^{2^{k}} \bigg( \sum\limits_{Q\in \square_k} \int\limits_{Q} \bigg( \int\limits_{\frac{t}{2}}^t \int\limits_{B(x,t)} |s^{-\beta} f(s,y) |^r \, \frac{\d y \d s}{s^{d+1}} \bigg)^\frac{p}{r} \, \d x \bigg)^\frac{q}{p} \frac{\d t}{t}.
    \intertext{Observe that for $Q\in \square_k$, $t\in[2^{k-1},2^k)$ and $x\in Q$ we have $(t,x)\in \bar Q$. Therefore, according to the definition of $G(\bar Q)$, we can cover $W(t,x)$ by all Whitney boxes of $G(\bar Q)$, to give}
        &\lesssim  \sum\limits_{k\in \IZ} 2^{-k\beta q}\int\limits_{2^{k-1}}^{2^{k}} \bigg( \sum\limits_{Q\in \square_k} \int\limits_{Q} \bigg(\sum\limits_{\bar R\in G(\bar Q)} \iint\limits_{\bar R} | f(s,y) |^r \, \frac{\d y \d s}{s^{d+1}} \bigg)^\frac{p}{r} \, \d x \bigg)^\frac{q}{p} \frac{\d t}{t}\\
        &\simeq \sum\limits_{k\in \IZ} 2^{-k\beta q}\bigg( \sum\limits_{Q\in \square_k} \ell(Q)^d \bigg(\sum\limits_{\bar R\in G(\bar Q)} \iint\limits_{\bar R} | f(s,y) |^r \, \frac{\d y \d s}{s^{d+1}} \bigg)^\frac{p}{r}   \bigg)^\frac{q}{p}.
    \intertext{Since $|G(\bar Q)|$ is bounded by a dimensional constant, we can pull out the sum to get}
        &\lesssim \sum\limits_{k\in \IZ} 2^{-k\beta q}\bigg( \sum\limits_{Q\in \square_k}  \sum\limits_{\bar R\in G(\bar Q)} \ell(Q)^d \|f\|_{\L^r(\bar R , \frac{\d y \d s}{s^{d+1}})}^p \bigg)^\frac{q}{p}.
    \intertext{Finally, for $\bar R \in G(\bar Q)$ it holds $\ell(R) \simeq \ell(Q)$. Indeed, for $Q\in \square_k$ only dyadic cubes $R\in \square_{k-1}\cup \square_k$ can satisfy $\bar R \cap W(t,x) \neq \emptyset$ for some $(t,x)\in \bar Q$. Therefore,}
    &\simeq \sum\limits_{k\in \IZ} 2^{-k\beta q}\bigg( \sum\limits_{Q\in \square_k}  \sum\limits_{\bar R\in G(\bar Q)} \ell(R)^d \|f\|_{\L^r(\bar R , \frac{\d y \d s}{s^{d+1}})}^p \bigg)^\frac{q}{p}.
    \intertext{Moreover, for a fixed $R \in \square_{k-1}\cup \square_k $ the number of cubes $Q\in \square_k$ with $\bar R \in G(\bar Q)$ is bounded by a dimensional constant. Thus,}
        &\lesssim \sum\limits_{k\in \IZ} 2^{-k\beta q}\bigg( \sum\limits_{R\in \square_{k-1}\cup \square_k}  \ell(R)^d \|f\|_{\L^r(\bar R , \frac{\d y \d s}{s^{d+1}})}^p \bigg)^\frac{q}{p}\\
        &\lesssim \sum\limits_{k\in \IZ} 2^{-k\beta q}\bigg( \sum\limits_{R\in \square_k}  \ell(R)^d \|f\|_{\L^r(\bar R , \frac{\d y \d s}{s^{d+1}})}^p \bigg)^\frac{q}{p},
    \end{align*}
    which proves one direction of our claim.\\

    \noindent\textbf{Step 2: We show \enquote{$\gtrsim$}.} Observe that, if $t \in (2^{k-1}, 2^k)$, $R \in \Box_k$ and $x\in R$, then $R \subset B(x,2\sqrt{d}t)$, and hence $\bar{R} \subset [\tfrac{t}{2},2t) \times B(x,ct)$, where $c \coloneqq 2\sqrt{d}$.
    Now, estimate
    \begin{align*}
        &\sum\limits_{k\in \IZ}2^{-k\beta q}\bigg( \sum\limits_{R\in \square_k}  \ell(R)^d \|f\|_{\L^r(\bar R , \frac{\d y \d s}{s^{d+1}})}^p \bigg)^\frac{q}{p}\\
        &\simeq \sum\limits_{k\in \IZ} \, \int\limits_{2^{k-1}}^{2^k} 2^{-k\beta q}\bigg( \sum\limits_{R\in \square_k}  \int\limits_R \|f\|_{\L^r(\bar R , \frac{\d y \d s}{s^{d+1}})}^p \,\d x \bigg)^\frac{q}{p}\,\frac{\d t}{t}\\
        &\leq \sum\limits_{k\in \IZ} \, \int\limits_{2^{k-1}}^{2^k} 2^{-k\beta q}\bigg( \sum\limits_{R\in \square_k} \int\limits_R \bigg( \int\limits_{\frac{t}{2}}^{2t}\int\limits_{B(x,ct)} |f(s,y)|^r \, \frac{\d y \d s}{s^{d+1}} \bigg)^\frac{p}{r} \,\d x \bigg)^\frac{q}{p}\,\frac{\d t}{t}\\
        &\lesssim \int\limits_{0}^\infty \bigg( \int\limits_{\IR^d}\bigg( \int\limits_{\frac{t}{2}}^{2t}\int\limits_{B(x,ct)} |s^{-\beta}f(s,y)|^r \, \frac{\d y \d s}{s^{d+1}} \bigg)^\frac{p}{r} \,\d x \bigg)^\frac{q}{p}\,\frac{\d t}{t}.
    \end{align*}
    Eventually, by a change of Whitney parameters (Proposition~\ref{prop: independent Whitney cube}) we can
    conclude the reverse inequality.
\end{proof}

\begin{remark}
\label{rem: trivial dyadic decomp}
    Let $0<p,q,r< \infty$ and $\beta\in\IR$. Given a function $f\in \Z^{p,q,r}_\beta$, Proposition~\ref{prop: dyadic char} shows that the decomposition
    \begin{align*}
        f = \sum\limits_{k\in\IZ} \sum\limits_{Q\in\square_k} \mathbf{1}_{\bar Q}f
    \end{align*}
    converges in $\Z^{p,q,r}_\beta$.
\end{remark}

Using the dyadic characterization, we derive the following embeddings, which can be seen as Hardy--Littlewood--Sobolev-type embeddings, in analogy to the corresponding embeddings for homogeneous Besov spaces; see \cite[Theorem 2.7.1]{Triebel1}.

\begin{theorem}[Weighted $\Z$-space embeddings]
\label{thm: weigthed embedding}
    Let $0<p_0\leq p_1\leq \infty$, $0<q_0\leq q_1\leq \infty$,  $0<r_1\leq r_0\leq \infty$ and $\beta_0,\beta_1\in \IR$ such that $\beta_0-\beta_1 = d(\frac{1}{p_0}-\frac{1}{p_1})$. Then, we have
    \begin{align*}
        \Z^{p_0,q_0,r_0}_{\beta_0} \hookrightarrow \Z^{p_1,q_1,r_1}_{\beta_1}.
    \end{align*}
\end{theorem}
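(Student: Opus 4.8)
The plan is to reduce the embedding, via the dyadic characterization of Proposition~\ref{prop: dyadic char}, to an elementary comparison of two weighted vector-valued sequence quasi-norms. Writing $a_Q^{(i)} \coloneqq \|f\|_{\L^{r_i}(\bar Q, \frac{\d y\,\d s}{s^{d+1}})}$ for a dyadic cube $Q$, Proposition~\ref{prop: dyadic char} gives, for any measurable $f$ on $\IR^{d+1}_+$ and $i = 0,1$,
\[
\|f\|_{\Z^{p_i,q_i,r_i}_{\beta_i}} \simeq \Big( \sum_{k\in\IZ} 2^{-k\beta_i q_i} \Big(\sum_{Q\in \square_k} \ell(Q)^d \big(a_Q^{(i)}\big)^{p_i} \Big)^{q_i/p_i} \Big)^{1/q_i},
\]
with sums replaced by suprema for infinite exponents, all quantities being read in $[0,\infty]$. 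So it suffices to bound the right-hand side for $i=1$ by the one for $i=0$; this is local on each Whitney box $\bar Q$ and on each dyadic generation $\square_k$, and the scaling hypothesis $\beta_0-\beta_1 = d(\tfrac{1}{p_0}-\tfrac{1}{p_1})$ enters exactly once.

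First I would absorb the parameter $r$. A direct computation gives $\int_{\bar Q}\frac{\d y\,\d s}{s^{d+1}} = \frac{2^d-1}{d}$ for every dyadic cube $Q$ — a purely dimensional constant. Since $r_1 \le r_0$, Jensen's inequality on this (renormalized) probability space gives $a_Q^{(1)} \lesssim_d a_Q^{(0)}$ uniformly in $Q$. Raising to the power $p_1$, summing against the positive weights $\ell(Q)^d$ over $\square_k$, taking $p_1$-th roots, and then taking $\ell^{q_1}$ in $k$, I may therefore replace $r_1$ by $r_0$ on the left-hand side at the cost of a dimensional constant; from now on I abbreviate $a_Q \coloneqq a_Q^{(0)}$.

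Next I would handle $p$ one generation at a time, then $q$. For fixed $k\in\IZ$ every $Q\in\square_k$ has $\ell(Q) = 2^k$, so
\[
2^{-k\beta_1}\Big(\sum_{Q\in\square_k}\ell(Q)^d a_Q^{p_1}\Big)^{1/p_1} = 2^{k(d/p_1-\beta_1)}\,\big\|(a_Q)_{Q\in\square_k}\big\|_{\ell^{p_1}},
\]
and likewise with $(p_0,\beta_0)$. The hypothesis $\beta_0-\beta_1 = d(\tfrac{1}{p_0}-\tfrac{1}{p_1})$ is precisely the equality $d/p_1-\beta_1 = d/p_0-\beta_0$, so the two scalar prefactors coincide; combined with the contractive embedding $\ell^{p_0}\hookrightarrow\ell^{p_1}$ (valid since $p_0\le p_1$) this yields, for every $k$,
\[
2^{-k\beta_1}\Big(\sum_{Q\in\square_k}\ell(Q)^d a_Q^{p_1}\Big)^{1/p_1} \le 2^{-k\beta_0}\Big(\sum_{Q\in\square_k}\ell(Q)^d a_Q^{p_0}\Big)^{1/p_0} \eqqcolon b_k.
\]
Since $q_0\le q_1$, the contractive embedding $\ell^{q_0}\hookrightarrow\ell^{q_1}$ gives $\|(b_k)_k\|_{\ell^{q_1}} \le \|(b_k)_k\|_{\ell^{q_0}}$, and chaining the three steps with the two instances of Proposition~\ref{prop: dyadic char} produces $\|f\|_{\Z^{p_1,q_1,r_1}_{\beta_1}} \lesssim \|f\|_{\Z^{p_0,q_0,r_0}_{\beta_0}}$; in particular finiteness of the right-hand side forces $f\in\Z^{p_1,q_1,r_1}_{\beta_1}$, which is the asserted continuous embedding.

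I do not expect a genuine obstacle here: once Proposition~\ref{prop: dyadic char} is in hand the argument is bookkeeping. The only two points deserving care are the computation that $\bar Q$ carries $\frac{\d y\,\d s}{s^{d+1}}$-mass comparable to $1$ (uniformly in $Q$), which is what makes the $r$-reduction uniform, and the observation that the scaling hypothesis forces the generation-dependent factors $2^{k(d/p_i-\beta_i)}$ to match \emph{exactly} and not merely up to a constant — a mismatch there would be fatal since $k$ ranges over all of $\IZ$. The cases in which one of $p_i,q_i,r_i$ is infinite are treated verbatim, replacing the relevant sums by essential suprema.
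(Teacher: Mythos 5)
Your proof is correct and follows essentially the same route as the paper's: reduce to the dyadic description of Proposition~\ref{prop: dyadic char}, use $r_1\le r_0$ together with the uniform $\frac{\d y\,\d s}{s^{d+1}}$-mass of Whitney boxes to pass to $r_0$, exploit $\ell(Q)=2^k$ and the scaling relation $\beta_0-\beta_1=d(\tfrac{1}{p_0}-\tfrac{1}{p_1})$ to match the generation-dependent prefactors, and then apply the contractive embeddings $\ell^{p_0}\hookrightarrow\ell^{p_1}$ and $\ell^{q_0}\hookrightarrow\ell^{q_1}$. The only cosmetic difference is the order in which the three parameters are handled (you do $r$ first, the paper does it last) and your factoring out the weight $\ell(Q)^d$ explicitly rather than absorbing it into the nested-$\ell^p$ inequality as the paper does.
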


\begin{proof}
    We use the dyadic characterization from Proposition~\ref{prop: dyadic char}. Since $p_0\leq p_1$, we get
    \begin{align*}
        \|f\|_{ \Z^{p_1,q_1,r_1}_{\beta_1} } &\simeq \Big(\sum\limits_{k\in\IZ} 2^{-k\beta_1 q_1}\Big( \sum\limits_{Q\in \square_k} \ell(Q)^d \|f\|_{\L^{r_1}(\bar Q, \frac{\d y \d s}{s^{d+1}})}^{p_1}  \Big)^\frac{q_1}{p_1} \Big)^\frac{1}{q_1}\\
        &\leq \Big(\sum\limits_{k\in\IZ} 2^{-k\beta_1 q_1}\Big( \sum\limits_{Q\in \square_k} \ell(Q)^{d\frac{p_0}{p_1}}\|f\|_{\L^{r_1}(\bar Q, \frac{\d y \d s}{s^{d+1}})}^{p_0}  \Big)^\frac{q_1}{p_0} \Big)^\frac{1}{q_1}\\
        &=\Big(\sum\limits_{k\in\IZ} 2^{-k(\beta_1 +d(\frac{1}{p_0}-\frac{1}{p_1}))q_1}\Big( \sum\limits_{Q\in \square_k} \ell(Q)^{d}\|f\|_{\L^{r_1}(\bar Q, \frac{\d y \d s}{s^{d+1}})}^{p_0}  \Big)^\frac{q_1}{p_0} \Big)^\frac{1}{q_1},
    \intertext{where we used $\ell(Q) = 2^k$ in the last step. Since $q_0\leq q_1$, $r_1\leq r_0$ and $\beta_0= \beta_1 +  d(\frac{1}{p_0}-\frac{1}{p_1})$, we get}
        &=\Big(\sum\limits_{k\in\IZ} 2^{-k\beta_0q_1}\Big( \sum\limits_{Q\in \square_k} \ell(Q)^{d}\|f\|_{\L^{r_1}(\bar Q, \frac{\d y \d s}{s^{d+1}})}^{p_0}  \Big)^\frac{q_1}{p_0} \Big)^\frac{1}{q_1}\\
        &\leq \Big(\sum\limits_{k\in\IZ} 2^{-k\beta_0q_0}\Big( \sum\limits_{Q\in \square_k} \ell(Q)^{d}\|f\|_{\L^{r_1}(\bar Q, \frac{\d y \d s}{s^{d+1}})}^{p_0}  \Big)^\frac{q_0}{p_0} \Big)^\frac{1}{q_0}\\
        &\lesssim \Big(\sum\limits_{k\in\IZ} 2^{-k\beta_0q_0}\Big( \sum\limits_{Q\in \square_k} \ell(Q)^{d}\|f\|_{\L^{r_0}(\bar Q, \frac{\d y \d s}{s^{d+1}})}^{p_0}  \Big)^\frac{q_0}{p_0} \Big)^\frac{1}{q_0}\\
        &\simeq \|f\|_{ \Z^{p_0,q_0,{r_0}}_{\beta_0}}.
    \end{align*}
    This proves the theorem.
\end{proof}

\begin{remark}
    We would like to point out that there are also mixed-type embeddings for tent and Z-spaces; see for example \cite[Thm.\@ 2.34]{AA}. Corresponding embeddings for $\T^{p,q,r}_\beta$ and $\Z^{p,q,r}_\beta$ are established in \cite[Thm.~3.23]{Haardt}.
\end{remark}

\subsection{Duality: full range}
\label{subsec: duality p<1}
We use the dyadic description to prove a duality theorem in the quasi-Banach range. Recall the convention $p'=\infty$ for $p\in(0,1]$.

\begin{theorem}[Duality -- quasi-Banach range]
\label{thm: duality full range}
    Let $1\leq r <\infty$, $0<p,q<\infty$ and $\beta\in \IR$. Furthermore, define $\beta'(p)\coloneqq -\beta+\max\{0,d(\frac{1}{p}-1)\}$.
    Then we have
    \begin{align*}
        \int\limits_0^\infty \int\limits_{\IR^d} |f(s,y)| \cdot |g(s,y)| \,\frac{\d y \d s}{s} \leq  \|f\|_{\Z^{p',q',r'}_{\beta'(p)} }\|g\|_{\Z^{\vphantom{p',q',r'}p,q,r}_{\beta} }
    \end{align*}
    for any measurable functions $f,g$. Moreover, we can identify $(\Z^{p,q,r}_\beta)' \simeq \Z^{p',q',r'}_{\beta'(p)}$ with equivalent norms via the $\L^2$ duality pairing.

\end{theorem}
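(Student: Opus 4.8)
The plan is to transfer the statement, via the dyadic description of Proposition~\ref{prop: dyadic char}, into a duality statement for weighted mixed-norm \emph{sequence} spaces over dyadic cubes; these stay usable in the quasi-Banach range $p,q<1$ (unlike, say, $\L^p(\IR^d)$), so this is the promised ``embedding into simpler spaces''. The exponent $\beta'(p)$ then comes out of the weight bookkeeping, and the correction $\max\{0,d(\tfrac1p-1)\}$ appears exactly because $\tfrac1p+\tfrac1{p'}=1$ fails for $p<1$. Throughout it is convenient to use the reference measure $\frac{\d y\,\d s}{s}$ on $\IR^{d+1}_+$, converting the $\frac{\d y\,\d s}{s^{d+1}}$ of Proposition~\ref{prop: dyadic char} by $\frac1s\simeq\frac{\ell(Q)^d}{s^{d+1}}$ on $\bar Q$; the resulting factors cancel because $r\ge1$ forces $\tfrac1r+\tfrac1{r'}=1$, which is where the hypothesis $r\ge1$ enters.

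\emph{Pairing inequality and the easy inclusion.} Since the Whitney boxes $\{\bar Q:Q\in\square\}$ partition $\IR^{d+1}_+$, one has $\int_0^\infty\int_{\IR^d}|fg|\,\frac{\d y\,\d s}{s}=\sum_{Q\in\square}\iint_{\bar Q}|fg|\,\frac{\d y\,\d s}{s}$. H\"older with exponents $r',r$ on each box, together with $s\simeq\ell(Q)$ there, bounds the $Q$-term by $\ell(Q)^d\,\|f\|_{\L^{r'}(\bar Q,\frac{\d y\,\d s}{s^{d+1}})}\,\|g\|_{\L^{r}(\bar Q,\frac{\d y\,\d s}{s^{d+1}})}$. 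Now sum over $Q\in\square_k$ (H\"older with $p',p$, with $\ell^{p'}$ read as $\ell^\infty$ when $p\le1$) and then over $k$ (likewise with $q',q$), inserting the weights from Proposition~\ref{prop: dyadic char}. The single volume factor $\ell(Q)^d=2^{kd}$ must be balanced against the weights $2^{kd/p}$ and $2^{kd/p'}$ carried by the $\Z$-norm of $g$ and the $\Z$-norm of $f$; this succeeds precisely when $\beta+\beta'(p)=d(\tfrac1p+\tfrac1{p'})-d=\max\{0,d(\tfrac1p-1)\}$, i.e.\ for $\beta'(p)$ as defined. This proves the displayed inequality, hence $\Z^{p',q',r'}_{\beta'(p)}\hookrightarrow(\Z^{p,q,r}_\beta)'$ through $(f,g)\mapsto\int_0^\infty\int_{\IR^d} f\bar g\,\frac{\d y\,\d s}{s}$.

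\emph{The reverse inclusion.} As the $\bar Q$ are pairwise disjoint and cover $\IR^{d+1}_+$, the map $f\mapsto(\mathbf 1_{\bar Q}f)_{Q\in\square}$, with inverse $(g_Q)_Q\mapsto\sum_Q g_Q$, is an isomorphism of $\Z^{p,q,r}_\beta$ onto the weighted mixed-norm sequence space $F$ of families $(g_Q)_Q$, $g_Q\in\L^r(\bar Q,\frac{\d y\,\d s}{s})$, with $\|(g_Q)_Q\|_F=\big(\sum_{k\in\IZ}2^{k(-\beta+d/p-d/r)q}\big(\sum_{Q\in\square_k}\|g_Q\|_{\L^r(\bar Q,\frac{\d y\,\d s}{s})}^p\big)^{q/p}\big)^{1/q}$; the norm equivalence is Proposition~\ref{prop: dyadic char} after the measure conversion (which absorbs $\ell(Q)^d$ into the outer weight). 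Hence $(\Z^{p,q,r}_\beta)'\cong F'$. Since $1\le r<\infty$, the inner space $\L^r(\bar Q,\frac{\d y\,\d s}{s})$ is Banach with dual $\L^{r'}(\bar Q,\frac{\d y\,\d s}{s})$, so $F'$ is computed layer by layer by the standard duality of (quasi-)Banach mixed-norm sequence spaces (with $\ell^{p'}$ read as $\ell^\infty$ when $p\le1$ and $\ell^{q'}$ as $\ell^\infty$ when $q\le1$, matching the convention $p'=q'=\infty$): it is the space of $(z_Q)_Q$ with $z_Q\in\L^{r'}(\bar Q,\frac{\d y\,\d s}{s})$, the reciprocal outer weight, and pairing $\sum_Q\iint_{\bar Q} g_Q\bar z_Q\,\frac{\d y\,\d s}{s}$. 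Running Proposition~\ref{prop: dyadic char} in the other direction, now with exponents $p',q',r'$, identifies $F'$ with $\Z^{p',q',r'}_\delta$, and the same weight arithmetic as in Step~1 --- one $\ell(Q)^d$ per use of the dyadic description, contributing $\ell(Q)^{d/p}$ and $\ell(Q)^{d/p'}$, plus $r$--$r'$ conversion factors that cancel --- gives $\delta=d(\tfrac1p+\tfrac1{p'})-d-\beta=\beta'(p)$. Chasing the pairing through $\Z^{p,q,r}_\beta\cong F$ and $F'\cong\Z^{p',q',r'}_{\beta'(p)}$ shows the duality is realized by $(f,g)\mapsto\int_0^\infty\int_{\IR^d} f\bar g\,\frac{\d y\,\d s}{s}$, which completes the identification. (When $p,q\ge1$ one may instead just invoke Theorem~\ref{thm: duality banach range}; the argument above is uniform in all cases.)

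The main difficulty is arithmetical rather than conceptual: keeping the two measures $\frac{\d y\,\d s}{s}$ and $\frac{\d y\,\d s}{s^{d+1}}$ straight through two uses of Proposition~\ref{prop: dyadic char} and the iterated duality, and verifying that the volume weights combine to yield precisely $\max\{0,d(\tfrac1p-1)\}$ in $\beta'(p)$ --- the key point being that for $p<1$ the product $\ell(Q)^{d/p}\ell(Q)^{d/p'}=\ell(Q)^{d/p}$ overshoots $\ell(Q)^d$ by $\ell(Q)^{d(1/p-1)}$, which is exactly the discrepancy that the shift $\beta\mapsto\beta'(p)$ absorbs. A minor further point, if one prefers a hands-on version of the reverse inclusion, is to check that for $L\in(\Z^{p,q,r}_\beta)'$ the functions $g_Q\in\L^{r'}(\bar Q)$ produced by the Riesz representation theorem on each box (with $Q$-uniform constants --- the single-box case of Proposition~\ref{prop: dyadic char}) patch to one representing function $g$; this uses that $f=\sum_Q\mathbf 1_{\bar Q}f$ converges in $\Z^{p,q,r}_\beta$ (Remark~\ref{rem: trivial dyadic decomp}) together with the continuity of $L$.
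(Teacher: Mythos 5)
Your proposal is correct and the overall strategy — route the problem through the dyadic characterization of Proposition~\ref{prop: dyadic char} and compute the dual of the resulting weighted mixed-norm sequence space — is the same as the paper's; the weight bookkeeping you describe (that $\ell(Q)^{d/p}\ell(Q)^{d/p'}$ overshoots $\ell(Q)^d$ by $\ell(Q)^{d(1/p-1)}$ when $p<1$, and that the $r$--$r'$ factors cancel because $\tfrac1r+\tfrac1{r'}=1$) is exactly what drives the paper's value of $\beta'(p)$. The difference is in how the sequence-space duality is established. You invoke, as a black box, the layer-by-layer duality $\bigl(\ell^q_w(\IZ;\ell^p(\square_k;Y))\bigr)'=\ell^{q'}_{-w}(\IZ;\ell^{p'}(\square_k;Y'))$ valid for all $0<p,q<\infty$ with $Y$ Banach and the convention $p'=q'=\infty$ for exponents $\le1$; this is true and gives a uniform argument. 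The paper instead proves exactly that fact by hand in each regime: in its Case 1 and in Case 2 with $q\le1$ it extends $\Phi$ along the dyadic decomposition to a functional on $\Z^{p,1,r}_\beta$ and reduces to Theorem~\ref{thm: duality banach range}, and in its hardest Case 2 with $1<q<\infty$ it constructs Hahn--Banach near-extremal test functions $\varphi_k$ on carefully chosen cubes $R_k$ and pairs against an arbitrary finite sequence $(a_k)$ to verify the $\ell^{q'}$ bound on $\bigl(2^{(\beta-d(1/p-1/r))k}\|f\|_{\L^{r'}(\bar R_k,\frac{\d y\,\d s}{s})}\bigr)_k$. That construction is precisely a self-contained proof of the outer $\ell^q$--$\ell^{q'}$ duality for quasi-Banach fibers, which is the only genuinely non-citable step in your ``standard duality'' claim. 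So your version buys brevity and uniformity across cases; the paper's buys self-containment, at the cost of a three-case split. To make your proposal fully airtight, one should supply the short justification of the outer-layer duality (test $L$ on single-slot sequences to get $(L_k)\subset X_k'$, then a near-extremal choice of unit vectors plus H\"older shows $\bigl(2^{-kw}\|L_k\|\bigr)_k\in\ell^{q'}$, with the converse inclusion by H\"older) rather than leaving it as an unattributed fact.
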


\begin{proof}
    The case $1\leq p<\infty$ and $1\leq q<\infty$ is the Banach range case and was done in Theorem~\ref{thm: duality banach range}. Hence, we focus on the other cases by dividing the proof into the following cases:\\

    \noindent \textbf{Case 1: $1\leq p<\infty$ and $0< q<1$}. Notice that $\beta'(p)=-\beta$ in this case. For $f$ and $g$ measurable,
    we get with a similar argumentation as in Theorem~\ref{thm: duality banach range} and Theorem~\ref{thm: weigthed embedding} that
    \begin{align*}
        \int\limits_0^\infty \int\limits_{\IR^d} |f(s,y)|\cdot |g(s,y)| \ \frac{\d y \d s }{s} &\lesssim  \|f\|_{\Z^{p', \infty,r'}_{-\beta}}\cdot\|g\|_{\Z^{\vphantom{p',q',r'}p,1,r}_\beta} \leq \|f\|_{\Z^{p', \infty,r'}_{-\beta}}\cdot \|g\|_{\Z^{\vphantom{p',q',r'}p,q,r}_\beta} .
    \end{align*}
    This shows the inclusion $\Z^{p', \infty,r'}_{-\beta}\subset (\Z^{p,q,r}_\beta)'$. To prove the reverse inclusion, we take $\Phi \in (\Z^{p,q,r}_\beta)'$. For $g\in \Z^{p,1,r}_\beta$ we define
    \begin{align*}
        \tilde \Phi (g) \coloneqq \sum\limits_{m\in\IZ} \Phi\Big(\sum\limits_{R\in\square_m} \mathbf{1}_{\bar R}g \Big).
    \end{align*}
    Notice that this is well defined and bounded because we have, using the dyadic characterization and support properties of the cubes,
    \begin{align*}
        \Big\|\sum\limits_{R\in\square_m} \mathbf{1}_{\bar R}g  \Big\|_{\Z^{p,q,r}_\beta} &\simeq \Big( \sum\limits_{k\in\IZ}2^{-k\beta q} \Big(\sum\limits_{Q\in\square_k} \ell(Q)^d \Big\|\sum\limits_{R\in\square_m} \mathbf{1}_{\bar R}g\Big\|_{\L^r\big(\bar Q, \frac{\d y \d s}{s^{d+1}}\big)}^p\Big)^\frac{q}{p}\Big)^\frac{1}{q}\\
        &\leq\Big( \sum\limits_{k\in\IZ}2^{-k\beta q} \Big(\sum\limits_{Q\in\square_k} \ell(Q)^d \Big(\sum\limits_{R\in\square_m} \|\mathbf{1}_{\bar R}g\|_{\L^r\big(\bar Q, \frac{\d y \d s}{s^{d+1}}\big)}\Big)^p\Big)^\frac{q}{p}\Big)^\frac{1}{q}\\
        &=\Big(2^{-m\beta q} \Big(\sum\limits_{Q\in\square_m} \ell(Q)^d \|g\|_{\L^r\big(\bar Q, \frac{\d y \d s}{s^{d+1}}\big)}^p\Big)^\frac{q}{p}\Big)^\frac{1}{q}\\
        &=2^{-m\beta } \Big(\sum\limits_{Q\in\square_m} \ell(Q)^d \|g\|_{\L^r\big(\bar Q, \frac{\d y \d s}{s^{d+1}}\big)}^p\Big)^\frac{1}{p}\\
        &\leq \|g\|_{\Z^{p,1,r}_\beta}
    \end{align*}
    on the one hand, and on the other hand, re-using the previous calculation,
    \begin{align*}
        |\tilde \Phi (g)|&\leq  \sum\limits_{m\in\IZ} \Big|\Phi\Big(\sum\limits_{R\in\square_m} \mathbf{1}_{\bar R}g \Big)\Big|\\
        &\leq \|\Phi\| \times \sum\limits_{m\in\IZ}
        \Big\|\sum\limits_{R\in\square_m} \mathbf{1}_{\bar R}g  \Big\|_{\Z^{p,q,r}_\beta}\\
        &\leq  \|\Phi\| \times \sum\limits_{m\in\IZ} 2^{-m\beta } \Big(\sum\limits_{Q\in\square_m} \ell(Q)^d \|g\|_{\L^r\big(\bar Q, \frac{\d y \d s}{s^{d+1}}\big)}^p\Big)^\frac{1}{p}\\
        &= \|\Phi\| \times\|g\|_{\Z^{p,1,r}_\beta}.
    \end{align*}
    Thus, we conclude that $\tilde \Phi \in (\Z^{p,1,r}_\beta)'$ and that $\tilde \Phi = \Phi$ on $\Z^{p,q,r}_\beta$ due to Remark~\ref{rem: trivial dyadic decomp}. By Theorem~\ref{thm: duality banach range}, there exists $f \in \Z^{p',\infty,r'}_{-\beta}$ such that
    \begin{align*}
        \int\limits_{0}^\infty\int\limits_{\IR^d} f(s,y) \cdot \overline{g(s,y)} \,\frac{\d y \d s}{s} = \tilde \Phi(g) = \Phi(g),\quad \text{for all $g \in \Z^{p,q,r}_\beta$.}
    \end{align*}
    Hence, we can identify $\Phi$ with $f$, which gives $(\Z^{p,q,r}_\beta)' \subset  \Z^{p',\infty,r'}_{-\beta}$.\\

    \noindent \textbf{Case 2: $0<p<1$ and $0<q<\infty$}.
    Notice that $\beta'(p)=-\beta +d(\frac{1}{p}-1)$ in this case. By Theorem~\ref{thm: weigthed embedding}, we have the continuous embedding
    \begin{align*}
         \Z^{p,q,r}_\beta \subset \Z^{1,q,r}_{\beta-d(\frac{1}{p}-1)}.
    \end{align*}
    Thus, for $f\in \Z^{\infty,q',r'}_{-\beta+d(\frac{1}{p}-1)}$ and $g\in \Z^{p,q,r}_\beta$, we have by Theorem~\ref{thm: duality banach range} (if $q\geq 1$) or the previous case (if $q<1$) that
    \begin{align*}
        \int\limits_{0}^\infty\int\limits_{\IR^d} |f(s,y)|\cdot |g(s,y)|\,\frac{\d y \d s}{s} \leq \|f\|_{ \Z^{\infty,q',r'}_{-\beta+d(\frac{1}{p}-1)}}  \|g\|_{ \Z^{\vphantom{p',q',r'}1,q,r}_{\beta-d(\frac{1}{p}-1)}}\lesssim \|f\|_{ \Z^{\infty,q',r'}_{-\beta+d(\frac{1}{p}-1)}}  \|g\|_{ \Z^{\vphantom{p',q',r'}p,q,r}_\beta }.
    \end{align*}
    Hence, $ \Z^{\infty,q',r'}_{-\beta+d(\frac{1}{p}-1)}\subset ( \Z^{p,q,r}_\beta )'$. For the converse inclusion, we distinguish cases. First, assume $q\leq 1$ and pick $\Phi \in ( \Z^{p,q,r}_\beta )'$. For $g\in  \Z^{1,q,r}_{\beta-d(\frac{1}{p}-1)}$, define
    \begin{align*}
        \tilde \Phi (g) \coloneqq \sum\limits_{m\in\IZ} \sum\limits_{R\in\square_m} \Phi(\mathbf{1}_{\bar R}g).
    \end{align*}
    Then, one has
    \begin{align*}
        |\tilde \Phi (g)| &\leq \sum\limits_{m\in\IZ} \sum\limits_{R\in\square_m} |\Phi(\mathbf{1}_{\bar R}g)|\\
        &\leq \|\Phi\|\sum\limits_{m\in\IZ} \sum\limits_{R\in\square_m} \|\mathbf{1}_{\bar R} g\|_{\Z^{p,q,r}_\beta}\\
        &\simeq  \|\Phi\|\sum\limits_{m\in\IZ} \sum\limits_{R\in\square_m} 2^{-m\beta}\ell(R)^\frac{d}{p}\| g\|_{\L^r\big(\bar R, \frac{\d y \d s}{s^{d+1}}\big)}\\
        &\simeq  \|\Phi\|\sum\limits_{m\in\IZ} \sum\limits_{R\in\square_m} 2^{-m(\beta-d(\frac{1}{p}-1))}\ell(R)^d\| g\|_{\L^r\big(\bar R, \frac{\d y \d s}{s^{d+1}}\big)}\\
        &\leq \|\Phi\|\cdot \|g\|_{\Z^{1,q,r}_{\beta-d(\frac{1}{p}-1)}},
    \end{align*}
    where we used $\ell^q \subset \ell^1$ in the last step, which uses the assumption $q \leq 1$ of the current case.
    Thus, $\tilde \Phi \in (\Z^{1,q,r}_{\beta-d(\frac{1}{p}-1)})' \simeq \Z^{\infty,\infty,r'}_{-\beta+d(\frac{1}{p}-1)}$ due to the first case. Since $\tilde \Phi = \Phi$ on $\Z^{p,q,r}_\beta$, this means that we can identify $\Phi$ with a $ \Z^{\infty,\infty,r'}_{-\beta+d(\frac{1}{p}-1)}$-function, which proves $(\Z^{p,q,r}_\beta)'\subset  \Z^{\infty,\infty,r'}_{-\beta+d(\frac{1}{p}-1)}$.\\

    \noindent Second, let $1< q <\infty$. We have $\Z^{p,1,r}_\beta\subset \Z^{p,q,r}_\beta$, therefore
    \begin{align*}
        (\Z^{p,q,r}_\beta)'\subset(\Z^{p,1,r}_\beta)' \simeq \Z^{\infty,\infty,r'}_{-\beta+d(\frac{1}{p}-1)}.
    \end{align*}
    For some $\Phi \in (\Z^{p,q,r}_\beta)'$, let $f\in \Z^{\infty,\infty,r'}_{-\beta+d(\frac{1}{p}-1)}$ be such that
    \begin{align}
    \label{eq: phi identity}
        \Phi(g) = \iint\limits_{\IR^{d+1}_+} f(s,y) \cdot \overline{g(s,y)} \,\frac{\d y \d s}{s},\quad \text{for all $g \in \Z^{p,1,r}_\beta$},
    \end{align}
    and
    \begin{align}
    \label{eq: r>1}
        \sup\limits_{k\in\IZ}2^{(\beta-d(\frac{1}{p}-\frac{1}{r}))k} \sup\limits_{R\in\square_k} \|f\|_{\L^{r'}\big(\bar R, \frac{\d y \d s}{s}\big)} &\simeq \sup\limits_{k\in\IZ}2^{(\beta-d(\frac{1}{p}-1))k} \sup\limits_{R\in\square_k} \|f\|_{\L^{r'}\big(\bar R, \frac{\d y \d s}{s^{d+1}}\big)} \nonumber\\
        &\simeq \|f\|_{\Z^{\infty,\infty,r'}_{-\beta+d(\frac{1}{p}-1)}}\\
        &= \|\Phi\|\nonumber
    \end{align}
    if $r>1$, or
    \begin{align}
    \label{eq: r=1}
        \sup\limits_{k\in\IZ}2^{(\beta-d(\frac{1}{p}-1))k} \sup\limits_{R\in\square_k} \|f\|_{\L^{\infty}(\bar R)} = \|f\|_{\Z^{\infty,\infty,r'}_{-\beta+d(\frac{1}{p}-1)}} = \|\Phi\|
    \end{align}
    in the case $r=1$. For simplicity, we continue the argument in the case $r>1$. The other case can be treated analogously using \eqref{eq: r=1} instead of \eqref{eq: r>1}.
    Using the estimate \eqref{eq: r>1}, one can find for each $k\in\IZ$ a cube $R_k\in \square_k$ such that
    \begin{align}
    \label{eq: compare sup with pointwise}
         2^{(\beta-d(\frac{1}{p}-\frac{1}{r}))k} \sup\limits_{R\in\square_k} \|f\|_{\L^{r'}\big(\bar R, \frac{\d y \d s}{s}\big)}
        &\simeq 2^{(\beta-d(\frac{1}{p}-\frac{1}{r}))k} \|f\|_{\L^{r'}\big(\bar R_k, \frac{\d y \d s}{s}\big)}.
    \end{align}
    As a consequence of the Hahn-Banach theorem and the duality of $\L^r$-spaces, it is possible to find for each $k\in\IZ$ a function $\varphi_k\in \L^{r}\big(\bar R_k, \frac{\d y \d s}{s}\big)$ with the properties that
    \begin{align*}
        \|\varphi_k\|_{\L^{r'}\big(\bar R_k, \frac{\d y \d s}{s}\big)} \leq 1 \quad \text{and} \quad  \|f\|_{\L^{r'}\big(\bar R_k, \frac{\d y \d s}{s}\big)} \leq 2 \langle f , \varphi_k\rangle_{\L^{r'},\L^{r}}.
    \end{align*}
    Note that we implicitly assume $\langle f , \varphi_k\rangle_{\L^{r'},\L^{r}}$ to be real.
    Since all $\bar R_k$ are disjoint, we can define a global function $\varphi\in \L^{r}_{\loc}\big(\IR^{d+1}_+, \frac{\d y \d s}{s}\big)$ with $\varphi = \varphi_k$ on $\bar R_k$ for each $k\in \IZ$, and therefore
    \begin{align*}
        \|\varphi\|_{\L^{r'}\big(\bar R_k, \frac{\d y \d s}{s}\big)} \leq 1 \quad \text{and} \quad  \|f\|_{\L^{r'}\big(\bar R_k, \frac{\d y \d s}{s}\big)} \leq 2 \langle f , \varphi \ind_{\bar R_k} \rangle_{\L^{r'},\L^{r}} \quad \text{for every $k\in\IZ$.}
    \end{align*}
    Let $(a_k)_{k\in\IZ}$ be an arbitrary real, finite sequence, say supported on $\{ 0, \dots, N \}$ for some $N\in\IN_0$. By the properties of $\varphi$, we have
    \begin{align}
    \label{eq: back to}
        \Big|\sum\limits_{|k|\leq N} a_k2^{(\beta-d(\frac{1}{p}-\frac{1}{r}))k} \|f\|_{\L^{r'}\big(\bar R_k, \frac{\d y \d s}{s}\big)}\big| &\lesssim  \sum\limits_{|k|\leq N} |a_k| 2^{(\beta-d(\frac{1}{p}-\frac{1}{r}))k} \langle f , \varphi \ind_{\bar R_k} \rangle_{\L^{r'},\L^r}\nonumber \\
        &=\langle f , \sum\limits_{|k|\leq N} |a_k| 2^{(\beta-d(\frac{1}{p}-\frac{1}{r}))k}\varphi \ind_{\bar R_k} \rangle_{\L^{r'},\L^r}
    \end{align}
    Because the test function is a compactly supported $\L^{r}$-function, it belongs to $\Z^{p,1,r}_\beta$ by Lemma~\ref{lem: compare Z with Lr}. Thus, \eqref{eq: phi identity} yields
    \begin{align*}
        \langle f , \sum\limits_{|k|\leq N} |a_k|2^{(\beta-d(\frac{1}{p}-\frac{1}{r}))k}\varphi \ind_{\bar R_k} \rangle_{\L^{r'},\L^r} &= \Phi\Big(\sum\limits_{|k|\leq N} |a_k|2^{(\beta-d(\frac{1}{p}-\frac{1}{r}))k}\varphi \ind_{\bar R_k} \Big)\\
        &\leq \|\Phi\|\times \bigg\|\sum\limits_{|k|\leq N} |a_k|2^{(\beta-d(\frac{1}{p}-\frac{1}{r}))k}\varphi \ind_{\bar R_k} \bigg\|_{\Z^{p,q,r}_\beta}.
    \end{align*}
    Finally, keeping the location of $\bar R_k$ in mind, we compute
    \begin{align*}
       &\bigg\|\sum\limits_{|k|\leq N} |a_k|2^{(\beta-d(\frac{1}{p}-\frac{1}{r}))k}\varphi \ind_{\bar R_k} \bigg\|_{\Z^{p,q,r}_\beta} \\
       &\lesssim \Big(\sum\limits_{m\in\IZ} 2^{-m\beta q} \Big(\sum\limits_{Q\in\square_m} \ell(Q)^d \Big( \sum\limits_{|k|\leq N} |a_k|2^{(\beta-d(\frac{1}{p}-\frac{1}{r}))k} \|\varphi \ind_{\bar R_k} \|_{\L^r(\bar Q,\frac{\d y \d s}{s^{d+1}})} \Big)^p \Big)^\frac{q}{p} \Big)^\frac{1}{q}\\
       &\simeq \Big(\sum\limits_{|k|\leq N} 2^{-k\beta q} \ell(R_k)^\frac{dq}{p}  |a_k|^q2^{(\beta-\frac{d}{p})qk} \|\varphi_k\|^q_{\L^r(\bar R_k,\frac{\d y \d s}{s})}  \Big)^\frac{1}{q}\\
       &\leq \Big(\sum\limits_{|k|\leq N}  |a_k|^q \Big)^\frac{1}{q}.
    \end{align*}
    Going back to \eqref{eq: back to}, we hence have
    \begin{align*}
        \Big|\sum\limits_{|k|\leq N} a_k2^{(\beta-d(\frac{1}{p}-\frac{1}{r}))k}  \|f\|_{\L^{r'}\big(\bar R_k,\frac{\d y \d s}{s}\big)}\Big|\lesssim \|\Phi\|\times \Big(\sum\limits_{|k|\leq N}  |a_k|^q \Big)^\frac{1}{q},
    \end{align*}
    where the implicit constant is independent of $f$, $N$ and $(a_k)_{k\in\IZ}$. This shows that the sequence of real numbers
    \begin{align*}
        b_k = 2^{(\beta-d(\frac{1}{p}-\frac{1}{r}))k}  \|f\|_{\L^{r'}\big(\bar R_k,\frac{\d y \d s}{s}\big)}, \quad k\in\IZ,
    \end{align*}
    belongs to $\ell^{q'}(\IZ)$ with norm bound
    \begin{align*}
        \|b_k\|_{\ell^{q'}} = \sup\limits_{\substack{(a_k)\in \ell^{q}\ \text{finite}\\
        \|a_k\|_{\ell^q}\leq 1}}\Big|\sum\limits_{k\in\IZ} a_k b_k\Big|\lesssim \|\Phi\|.
    \end{align*}
    Finally, combining this with \eqref{eq: compare sup with pointwise}, we obtain
    \begin{align*}
        \|f\|_{\Z^{\infty,q',r'}_{-\beta+d(\frac{1}{p}-1)}} &\simeq\Big(\sum\limits_{k\in\IZ} 2^{(\beta-d(\frac{1}{p}-1))kq'}  \sup\limits_{R\in\square_k}\|f\|^{q'}_{\L^{r'}\big(\bar R,\frac{\d y \d s}{s^{d+1}}\big)}\Big)^\frac{1}{q'}\\
        &\lesssim\Big(\sum\limits_{k\in\IZ} 2^{(\beta-d(\frac{1}{p}-1))kq'}  \|f\|^{q'}_{\L^{r'}\big(\bar R_k,\frac{\d y \d s}{s^{d+1}}\big)}\Big)^\frac{1}{q'}\\
        &\lesssim \|\Phi\|,
    \end{align*}
    which shows $(\Z^{p,q,r}_\beta)'\subset  \Z^{\infty,q',r'}_{-\beta+d(\frac{1}{p}-1)}$.
\end{proof}

\section{Real interpolation of $\Z$-spaces}
\label{sec: real interpolation}
\noindent In this section, we give a complete picture of the real interpolation theory of $\Z^{p, q,r}_{\beta}$ spaces. Before we dive into our statements, we give a brief introduction to real interpolation theory, thereby also introducing relevant notation. For more background, we refer to the monograph \cite[Chp.\@ 3]{Bergh_Loefstroem} for a detailed excursion.

Let $(X_0,X_1)$ be an interpolation couple, that is, a pair of (quasi-)Banach spaces $X_0$ and $X_1$ that are continuous embedded into a Hausdorff topological vector space $Y$. Then we define the (quasi-)Banach space $X_0+X_1 = \{x_0+x_1 \in Y: x_0\in X_0, x_1\in X_1\}$  equipped with the (quasi-)norm
\begin{align*}
    \|x\|_{X_0+X_1} \coloneqq \inf\limits_{\substack{x_0\in X_0, x_1\in X_1\\ x=x_0+x_1}} \|x_0\|_{X_0} + \|x_1\|_{X_1}.
\end{align*}
For every $x\in X_0 + X_1$ and $t>0$ we define the $K$-functional
\begin{align*}
    K(x,t,X_0,X_1) \coloneqq \inf\limits_{\substack{x_0\in X_0, x_1\in X_1\\ x=x_0+x_1}} \|x_0\|_{X_0} + t\|x_1\|_{X_1}.
\end{align*}
Since we will often work with $q$-powers of this functional in the following, we introduce the more readable abbreviation
\begin{align*}
    K^q(x,t,X_0,X_1) \coloneqq \big(
    K(x,t,X_0,X_1)
    \big)^q.
\end{align*}
Finally, for $0<\theta<1$ and $0<q\leq \infty$, we define the real interpolation space $(X_0,X_1)_{\theta,q}$ as the set of all $x\in X_0+X_1$ such that
\begin{align*}
    \|x\|_{\theta,q} \coloneqq \bigg(\int\limits_0^\infty t^{-\theta q}  K^q(x,t,X_0,X_1) \, \frac{\d t}{t}\bigg)^\frac{1}{q}<\infty.
\end{align*}
It is known that $(X_0,X_1)_{\theta,q}$ is a (quasi-)Banach space. Furthermore, splitting the integral dyadically and using monotonicity in $t$ of the $K$-functional, one calculates
\begin{align}
\label{eq: dyadic int}
    \|x\|_{\theta,q} \simeq \Big(\sum\limits_{k\in\IZ} 2^{k\alpha\theta q} K^q(x,2^{-k\alpha},X_0,X_1)\Big)^\frac{1}{q}
\end{align}
for every $\alpha>0$, where the implicit constant depends only on $\alpha, \theta,q$.\\

Now that we have set the notation, we will begin with our first result. Its proof is similar to that of \cite[Thm.\@ 2.4.2]{Triebel1}.

\begin{theorem}[Real interpolation of $\Z$-spaces I]
\label{thm: real int of Z-spaces}
    Let $0< p,q, q_0,q_1, r\leq \infty$. Furthermore, let $\beta_0,\beta_1\in\IR$ with $\beta_0\neq \beta_1$ and $\theta\in (0,1)$. Then, we have
    \begin{align*}
        (\Z^{p,q_0,r}_{\beta_0} , \Z^{p,q_1,r}_{\beta_1} )_{\theta, q} = \Z^{p, q,r}_{\beta}
    \end{align*}
    where $\beta = (1-\theta)\beta_0 + \theta \beta_1$.
\end{theorem}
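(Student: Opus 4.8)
The statement is the analogue of \cite[Thm.\@ 2.4.2]{Triebel1} at the level of $\Z$-spaces, with $p$ and $r$ fixed and only the second fine index and the smoothness varying. The natural strategy is to compute the $K$-functional of the couple $(\Z^{p,q_0,r}_{\beta_0},\Z^{p,q_1,r}_{\beta_1})$ directly in terms of the ``layer norms'' of $f$. For $t>0$ write
\[
    a(t) \coloneqq \bigg\|\bigg(\fint_{t/2}^t\fint_{B(\cdot,t)}|f(s,y)|^r\,\d y\,\d s\bigg)^{1/r}\bigg\|_{\L^p},
\]
so that $\|f\|_{\Z^{p,q,r}_\beta}^q = \int_0^\infty (t^{-\beta}a(t))^q\,\tfrac{\d t}{t}$. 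The key observation is that the $\Z$-norm is, for fixed $p,r$, an $\ell^q_\beta$-type norm (with weight $t^{-\beta}$) of a function $t\mapsto a(t)$ that does \emph{not} depend on $q$ or $\beta$; moreover the natural decomposition of $f$ as $f = f\mathbf 1_{\{s\le u\}} + f\mathbf 1_{\{s>u\}}$ (or a dyadic version thereof) splits $a(t)$ cleanly because the layer at height $t$ only ``sees'' $s\in(t/2,t)$. This reduces the problem to the well-known computation of $K$-functionals and real interpolation for the weighted mixed spaces $L^{q}((0,\infty),t^{-\beta q}\tfrac{\d t}{t};\L^r(W(t,x)))$-type objects, i.e.\@ essentially to interpolation of weighted $\ell^{q}_\beta$-valued sequence spaces, for which $(\ell^{q_0}_{\beta_0}(X),\ell^{q_1}_{\beta_1}(X))_{\theta,q}=\ell^{q}_{\beta}(X)$ with $\beta=(1-\theta)\beta_0+\theta\beta_1$ holds (see \cite[Thm.\@ 5.6.1]{Bergh_Loefstroem} together with the retraction trick below).

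Concretely I would proceed as follows. \textbf{Step 1 (discretization).} Using~\eqref{eq: dyadic int} with $\alpha = |\beta_1-\beta_0|$, replace the continuous $K$-functional by the sequence $\big(2^{k\alpha\theta q}K^q(f,2^{-k\alpha},\cdot,\cdot)\big)_k$. \textbf{Step 2 (upper bound for $K$).} Given $u>0$, split $f = g_u + h_u$ where $g_u = f\mathbf 1_{\{s\le u\}}$ and $h_u = f\mathbf 1_{\{s>u\}}$; since the Whitney layer $W(t,x)$ only involves $s\in(t/2,t)$, we get $a_{g_u}(t) = a(t)$ for $t\le u$ and $a_{g_u}(t)=0$ for $t>2u$ (with an overlap region $u<t\le 2u$ where $a_{g_u}(t)\le a(t)$), and symmetrically for $h_u$. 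Hence $\|g_u\|_{\Z^{p,q_0,r}_{\beta_0}}\lesssim (\int_0^{2u}(t^{-\beta_0}a(t))^{q_0}\tfrac{\d t}{t})^{1/q_0}$ and $\|h_u\|_{\Z^{p,q_1,r}_{\beta_1}}\lesssim (\int_{u/2}^\infty (t^{-\beta_1}a(t))^{q_1}\tfrac{\d t}{t})^{1/q_1}$, which gives
\[
    K(f,u^{\beta_1-\beta_0},\Z^{p,q_0,r}_{\beta_0},\Z^{p,q_1,r}_{\beta_1})\ \lesssim\ \bigg(\int_0^{2u}(t^{-\beta_0}a(t))^{q_0}\tfrac{\d t}{t}\bigg)^{1/q_0} + u^{\beta_1-\beta_0}\bigg(\int_{u/2}^\infty(t^{-\beta_1}a(t))^{q_1}\tfrac{\d t}{t}\bigg)^{1/q_1}
\]
(assume WLOG $\beta_0<\beta_1$; the other case is symmetric, as is the bookkeeping when some of $q,q_0,q_1$ are infinite). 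Plugging this into the integral defining $\|f\|_{\theta,q}$, substituting $u = t^{1/(\beta_1-\beta_0)}$, and applying Hardy's inequality in the two half-line directions (exactly as in the proof of $(\ell^{q_0}_{\beta_0},\ell^{q_1}_{\beta_1})_{\theta,q}=\ell^q_\beta$) yields $\|f\|_{\theta,q}\lesssim \|f\|_{\Z^{p,q,r}_\beta}$, i.e.\@ the inclusion $\Z^{p,q,r}_\beta\subset(\Z^{p,q_0,r}_{\beta_0},\Z^{p,q_1,r}_{\beta_1})_{\theta,q}$. \textbf{Step 3 (lower bound / reverse inclusion).} For the converse, the cleanest route is the retraction–coretraction argument: by Lemma~\ref{lem: vv-embedding} (or more precisely by a layer-wise variant of the dyadic characterization, Proposition~\ref{prop: dyadic char}), the map $f\mapsto (a_k)_{k}$ with $a_k = \big\|(\fint_{2^{k-1}}^{2^k}\fint_{B(\cdot,2^k)}|f|^r)^{1/r}\big\|_{\L^p}$ realizes $\Z^{p,q,r}_\beta$ as a retract of $\ell^q_\beta(\IZ;\IR)$ uniformly in $\beta,q$, with a common bounded linear section (reconstruct $f$ from a sequence of ``representatives'' supported on disjoint Whitney layers). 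Real interpolation commutes with retracts \cite[Thm.\@ 6.4.2]{Bergh_Loefstroem}, so the identity reduces to $(\ell^{q_0}_{\beta_0},\ell^{q_1}_{\beta_1})_{\theta,q}=\ell^q_\beta$, which is classical.

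The \textbf{main obstacle} is Step 2's bookkeeping: the Whitney layers $W(t,x)$ overlap in the $s$-variable by a factor $2$, so the split $f=g_u+h_u$ does not perfectly decouple $a(t)$, and one has to absorb the transition region $u/2<t<2u$ — this is harmless because it is a bounded number of dyadic scales and $a(\cdot)$ is the same function on both sides, but it needs to be written carefully, preferably after discretizing into dyadic layers from the start so that one chooses $u$ dyadic and the overlap is literally one layer. A secondary subtlety is that $p$ (and possibly $q,q_0,q_1,r$) may be $<1$, so the ``spaces'' are only quasi-Banach; this is not a real problem because the $K$-functional method and Hardy-type inequalities (with exponents $q_0,q_1,q$ possibly $<1$) go through, and the quasi-triangle constants only affect implicit constants — but one should invoke \eqref{eq: dyadic int} and the quasi-Banach form of the retraction theorem rather than their Banach counterparts. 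I expect no genuinely new difficulty beyond faithfully transcribing the classical $(\ell^{q_0}_{\beta_0},\ell^{q_1}_{\beta_1})_{\theta,q}$ computation through the (uniform-in-$\beta,q$) retraction onto disjoint Whitney layers.
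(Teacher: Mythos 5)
Your plan shares the paper's core ingredients — the dyadic discretization of the $K$-functional via~\eqref{eq: dyadic int} and the split $f = f\mathbf 1_{\{s\le u\}} + f\mathbf 1_{\{s>u\}}$ at a single height, which is exactly the decomposition the paper uses in its Step~2. The way you exploit the fact that Whitney layers ``only see'' $s\in(t/2,t)$ to decouple the two halves (up to a one-layer overlap) is also essentially the paper's observation.

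There is, however, a genuine gap in your Step~3. The map $f\mapsto (a_k)_k$ with $a_k=\|(\fint\fint|f|^r)^{1/r}\|_{\L^p}$ is \emph{not linear} — it involves absolute values and an $\L^p$-norm — so $\Z^{p,q,r}_\beta$ is not realized as a retract of $\ell^q_\beta(\IZ;\IC)$ by this map, and the retraction theorem for real interpolation does not apply. The scalar identity $(\ell^{q_0}_{\beta_0},\ell^{q_1}_{\beta_1})_{\theta,q}=\ell^q_\beta$ therefore cannot be invoked this way. To salvage the idea you would need the \emph{linear} reparametrization $f\mapsto 2^{kd/p}f_{k,x}$ used in the proof of Theorem~\ref{thm: real interpolation Z spaces II}, which is an isomorphism onto $\ell^q_\beta(\IZ;\ell^p(\IZ^d;\L^r))$ with a genuine fixed inner space $X=\ell^p\L^r$; you would then need the $X$-valued version $(\ell^{q_0}_{\beta_0}(X),\ell^{q_1}_{\beta_1}(X))_{\theta,q}=\ell^q_\beta(X)$ for a quasi-Banach $X$, which is not standard at the level of references you cite. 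By contrast, the paper avoids all of this: its Step~1 establishes the converse inclusion $(\Z^{p,\infty,r}_{\beta_0},\Z^{p,\infty,r}_{\beta_1})_{\theta,q}\subset\Z^{p,q,r}_\beta$ by an elementary \emph{lower} bound on the $K$-functional (drop to a single dyadic layer and estimate any decomposition $f=f_0+f_1$ from below), and then gets the full range of $q_0,q_1$ by the nesting $\Z^{p,q_i,r}_{\beta_i}\subset\Z^{p,\infty,r}_{\beta_i}$.

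A smaller issue is in your Step~2: the claim that ``applying Hardy's inequality in the two half-line directions'' directly yields $\Z^{p,q,r}_\beta\subset(\Z^{p,q_0,r}_{\beta_0},\Z^{p,q_1,r}_{\beta_1})_{\theta,q}$ for \emph{arbitrary} $q_0,q_1$ is not quite right. When $q_0>q$ (resp.\ $q_1>q$) the needed Hardy-type inequality with exponent $q/q_0<1$ fails in general. The paper circumvents this by first choosing a common $\tilde q<q$ (so the Hölder step $\frac{\tilde q}{\sigma}+\frac{\tilde q}{q}=1$ makes sense), proving $\Z^{p,q,r}_\beta\subset(\Z^{p,\tilde q,r}_{\beta_0},\Z^{p,\tilde q,r}_{\beta_1})_{\theta,q}$, and then appealing to the monotonicity $\Z^{p,\tilde q,r}_{\beta_i}\subset\Z^{p,q_i,r}_{\beta_i}$ — that is, both extreme cases ($\tilde q<q$ and $\infty$) are proved directly and the rest is by nesting, exactly as in the classical proof of the scalar $\ell^q_\beta$ identity you reference. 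If you adopt that sandwich structure (replace Step~3's retraction with the paper's Step~1 lower bound, and restrict your Step~2 to $\tilde q<q$), your proof becomes correct and coincides with the paper's.
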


\noindent We stress that $q$ is not related to $q_0$ and $q_1$ but arbitrary.

\begin{proof}
    To simplify notation, we present the proof only for finite exponents. Moreover, Lebesgue norms over a Whitney box are always with respect to the measure $\frac{\d y \d s}{s^{d+1}}$ without explicitly mentioning this. The case of infinite exponents follows with the usual modification. Moreover, we assume without loss of generality that $\beta_0>\beta_1$. In the proof, we use without mentioning that the $\Z^{p,q,r}_\beta$ (quasi-)norm is equivalent to the dyadic description given in Proposition~\ref{prop: dyadic char}. We divide the proof into three steps. \\

    \noindent \textbf{Step 1 : We show $(\Z^{p,\infty,r}_{\beta_0} , \Z^{p,\infty,r}_{\beta_1} )_{\theta, q} \subset  \Z^{p, q,r}_{\beta}$}.\\
    Denote by $ \|\cdot\|^q_{\theta,q}$ the norm of the interpolation space $(\Z^{p,\infty,r}_{\beta_0} , \Z^{p,\infty,r}_{\beta_1} )_{\theta, q}$. Since $\beta_1\neq \beta_0$, we use \eqref{eq: dyadic int} with $\alpha = \beta_0-\beta_1$ to get
    \begin{align}
    \label{eq: real int char cont to dyad}
        \|f\|^q_{\theta,q}\simeq  \sum\limits_{k\in\IZ} 2^{k(\beta_0-\beta_1)\theta q} K^q(f,2^{-k(\beta_0-\beta_1)},\Z^{p,\infty,r}_{\beta_0} , \Z^{p,\infty,r}_{\beta_1}).
    \end{align}
    Now, let $f_0\in \Z^{p,\infty,r}_{\beta_0}$ and $f_1\in \Z^{p,\infty,r}_{\beta_1}$ be such that $f=f_0 + f_1$. Then, for each $k\in\IZ$, we have
    \begin{align*}
        &2^{- k\beta_0}\bigg(\sum\limits_{Q\in\square_k} \ell(Q)^d \|f\|_{\L^r(\bar Q)}^p\bigg)^\frac{1}{p}\\
        &\lesssim 2^{-k\beta_0}\bigg(\sum\limits_{Q\in\square_k} \ell(Q)^d \|f_0\|_{\L^r(\bar Q)}^p\bigg)^\frac{1}{p} + 2^{-k\beta_0}\bigg(\sum\limits_{Q\in\square_k} \ell(Q)^d \|f_1\|_{\L^r(\bar Q)}^p\bigg)^\frac{1}{p} \\
        &\leq \sup\limits_{m\in\IZ}2^{-m\beta_0}\bigg(\sum\limits_{Q\in\square_{m}} \ell(Q)^d \|f_0\|_{\L^r(\bar Q)}^p\bigg)^\frac{1}{p}\\
        &\qquad+ 2^{-k(\beta_0-\beta_1)} \sup\limits_{m\in\IZ}2^{-m\beta_1}\bigg(\sum\limits_{Q\in\square_{m}} \ell(Q)^d \|f_1\|_{\L^r(\bar Q)}^p\bigg)^\frac{1}{p} \\
        &\simeq \|f_0\|_{\Z^{p,\infty,r}_{\beta_0}} + 2^{-k(\beta_0-\beta_1)}\|f_1\|_{\Z^{p,\infty,r}_{\beta_1}}.
    \end{align*}
    Since this holds for all decompositions of $f$ in these spaces, we get that
    \begin{align*}
        \|f\|^q_{\theta,q}&\simeq\sum\limits_{k\in\IZ} 2^{k(\beta_0-\beta_1)\theta q} K^q(f,2^{-k(\beta_0-\beta_1)},\Z^{p,\infty,r}_{\beta_0} , \Z^{p,\infty,r}_{\beta_1})\\
        &\gtrsim \sum\limits_{k\in\IZ} 2^{k(\beta_0-\beta_1)\theta q} 2^{- k\beta_0q}\bigg(\sum\limits_{Q\in\square_k} \ell(Q)^d \|f\|_{\L^r(\bar Q)}^p\bigg)^\frac{q}{p}\\
        &=\sum\limits_{k\in\IZ}  2^{- k\beta q}\bigg(\sum\limits_{Q\in\square_k} \ell(Q)^d \|f\|_{\L^r(\bar Q)}^p\bigg)^\frac{q}{p}\\
        &\simeq \|f\|_{\Z^{p,q,r}_\beta}^q,
    \end{align*}
    where we used $\theta(\beta_0-\beta_1) = \beta_0-\beta$ in the penultimate step.\\

    \noindent \textbf{Step 2: $\Z^{p, q,r}_{\beta}\subset (\Z^{p,\tilde q,r}_{\beta_0} , \Z^{p,\tilde q,r}_{\beta_1} )_{\theta, q} $ for $0<\tilde q<q$}.\\
    Now, denote by $ \|\cdot \|_{\theta,q}$ the (quasi-)norm of the interpolation space $(\Z^{p,\tilde q,r}_{\beta_0} , \Z^{p,\tilde q,r}_{\beta_1} )_{\theta, q}$. Take $k\in \IZ$ and define
    \begin{align*}
        f_0(s,y) = \mathbf{1}_{[2^{k},\infty)}(s) f(s,y)  \quad \text{and} \quad f_1(s,y) =\mathbf{1}_{(0,2^{k})}(s) f(s,y).
    \end{align*}
    Then, $f = f_0 + f_1$. To bound $\|f_0\|_{\Z^{p,\tilde q,r}_{\beta_0}}$, expand
    \begin{align*}
        \|f_0\|^{\tilde q}_{\Z^{p,\tilde q,r}_{\beta_0}} \simeq \sum\limits_{n\in\IZ} 2^{-n\beta_0  \tilde q} \Big(\sum\limits_{Q\in\square_n} \ell(Q)^d \big\|\mathbf{1}_{[2^{k},\infty)} f \big\|_{\L^r(\bar Q)}^p \Big)^\frac{\tilde q}{p} .
    \end{align*}
    We only have to consider $n\geq k+1$ in the outer sum, since otherwise the support of the integrand in the $\L^r$-norm is disjoint to $\bar Q$. Moreover, define
    \begin{align}
        \label{eq: xin}
        \xi_n \coloneqq \Big(\sum\limits_{Q\in\square_n} \ell(Q)^d \|f \|_{\L^r(\bar Q)}^p \Big)^\frac{1}{p}
    \end{align}
    for all $n\in\IZ$. Then,
    \begin{align*}
        \|f_0\|^{\tilde q}_{\Z^{p,\tilde q,r}_{\beta_0}} \simeq \sum\limits_{n = k+1}^\infty 2^{-n\beta_0  \tilde q} \Big(\sum\limits_{Q\in\square_n} \ell(Q)^d \|f \|_{\L^r(\bar Q)}^p \Big)^\frac{\tilde q}{p} = \sum\limits_{n = k+1}^\infty 2^{-n\beta_0 \tilde q} \xi_n^{\tilde q}.
    \end{align*}
    Analogously, we get
    \begin{align*}
        \|f_1\|^{\tilde q}_{\Z^{p,\tilde q,r}_{\beta_1}} \simeq \sum\limits_{n = -\infty}^k 2^{-n\beta_1 \tilde q} \Big(\sum\limits_{Q\in\square_n} \ell(Q)^d \|f \|_{\L^r(\bar Q)}^p \Big)^\frac{\tilde q}{p} = \sum\limits_{n = -\infty}^k 2^{-n\beta_1 \tilde q} \xi_n^{\tilde q}.
    \end{align*}
    Taking the infimum over all decompositions of $f$ yields
    \begin{align*}
        K^q(f,2^{-k(\beta_0-\beta_1)},\Z^{p,\tilde q,r}_{\beta_0} , \Z^{p,\tilde q,r}_{\beta_1}) \lesssim \bigg(\Big[\sum\limits_{n = k+1}^\infty 2^{-n\beta_0  \tilde q} \xi_n^{\tilde q} \Big]^\frac{1}{\tilde q} + 2^{-k(\beta_0-\beta_1)}\Big[\sum\limits_{n = -\infty}^k 2^{-n\beta_1\tilde q} \xi_n^{\tilde q} \Big]^\frac{1}{\tilde q}\bigg)^q
    \end{align*}
    for every $k\in\IZ$. Thus, we have by \eqref{eq: dyadic int} that
    \begin{align}
    \label{eq: RI 1}
         \|f\|^q_{\theta,q} \lesssim  \sum\limits_{k\in\IZ} 2^{k(\beta_0-\beta_1)\theta q} \bigg(\Big[\sum\limits_{n = k+1}^\infty 2^{-n\beta_0 \tilde q} \xi_n^{\tilde q} \Big]^\frac{1}{\tilde q} + 2^{-k(\beta_0-\beta_1)}\Big[\sum\limits_{n = -\infty}^k 2^{-n\beta_1 \tilde q} \xi_n^{\tilde q} \Big]^\frac{1}{\tilde q}\bigg)^q.
    \end{align}
    Now, pick $\beta_0>\alpha_0 >\beta >\alpha_1>\beta_1$ and $\sigma>0$ such that $\frac{\tilde q}{\sigma}+\frac{\tilde q}{q}  = 1$. Then, with Hölder's inequality we get
    \begin{align*}
        \Big[\sum\limits_{n = k+1}^\infty 2^{-n\beta_0  \tilde q} \xi_n^{\tilde q} \Big]^\frac{1}{\tilde q} &= \Big[\sum\limits_{n = k+1}^\infty 2^{-n(\beta_0-\alpha_0) \tilde q} \cdot 2^{-n\alpha_0  \tilde q}\xi_n^{\tilde q} \Big]^\frac{1}{\tilde q}\\
        &\leq \Big[\sum\limits_{n = k+1}^\infty 2^{-n(\beta_0-\alpha_0) \sigma}\Big]^\frac{1}{\sigma}\cdot \Big[\sum\limits_{n = k+1}^\infty 2^{-n\alpha_0  q}\xi^{q}_n \Big]^\frac{1}{ q}\\
        &\lesssim 2^{-k(\beta_0-\alpha_0)}\Big[\sum\limits_{n = k+1}^\infty  2^{-n\alpha_0  q}\xi^{q}_n \Big]^\frac{1}{ q},
    \end{align*}
    as well as
    \begin{align*}
        \Big[\sum\limits_{n = -\infty}^k 2^{-n\beta_1 \tilde q} \xi_n^{\tilde q} \Big]^\frac{1}{\tilde q} &= \Big[\sum\limits_{n = -\infty}^k 2^{-n(\beta_1-\alpha_1) \tilde q} \cdot 2^{-n\alpha_1 \tilde q}\xi_n^{\tilde q} \Big]^\frac{1}{\tilde q}\\
        &\leq \Big[\sum\limits_{n = -\infty}^k 2^{-n(\beta_1-\alpha_1) \sigma}\Big]^\frac{1}{\sigma} \cdot \Big[\sum\limits_{n = -\infty}^k  2^{-n\alpha_1 q}\xi_n^{ q} \Big]^\frac{1}{q}\\
        &\lesssim 2^{-k(\beta_1-\alpha_1)}\Big[\sum\limits_{n = -\infty}^k  2^{-n\alpha_1  q}\xi_n^{ q} \Big]^\frac{1}{q}.
    \end{align*}
    Plug both bounds back into \eqref{eq: RI 1}. Using $\beta_0 - \beta = \theta(\beta_0-\beta_1)$, we get
    \begin{align*}
        \|f\|^q_{\theta,q} &\lesssim \sum\limits_{k\in\IZ} 2^{k(\beta_0-\beta_1)\theta q} \bigg(2^{-k(\beta_0-\alpha_0)}\Big[\sum\limits_{n = k+1}^\infty  2^{-n\alpha_0 q}\xi^{q}_n \Big]^\frac{1}{ q} + 2^{-k(\beta_0-\beta_1)}2^{-k(\beta_1-\alpha_1)}\Big[\sum\limits_{n = -\infty}^k  2^{-n\alpha_1   q}\xi_n^{ q} \Big]^\frac{1}{q}\bigg)^q\\
        &\lesssim \sum\limits_{k\in\IZ} 2^{k(\beta_0-\beta)q} \bigg(2^{-k(\beta_0-\alpha_0)q}\sum\limits_{n = k+1}^\infty  2^{-n\alpha_0  q}\xi^{q}_n + 2^{-k(\beta_0-\alpha_1)q}\sum\limits_{n = -\infty}^k  2^{-n\alpha_1   q}\xi_n^{ q}\bigg)\\
        &= \sum\limits_{k\in\IZ} 2^{-k(\beta-\alpha_0)q}\sum\limits_{n = k+1}^\infty  2^{-n\alpha_0 q}\xi^{q}_n + \sum\limits_{k\in\IZ} 2^{-k(\beta-\alpha_1)q}\sum\limits_{n = -\infty}^k  2^{-n\alpha_1 q}\xi_n^{ q}.
    \intertext{Finally, with Fubini's theorem we get}
        &= \sum\limits_{n\in\IZ} \Big( \sum\limits_{k=-\infty}^{n-1}2^{-k(\beta-\alpha_0)q}\Big) 2^{-n\alpha_0  q}\xi^{q}_n + \sum\limits_{n\in\IZ}\Big(\sum\limits_{k=n}^\infty 2^{-k(\beta-\alpha_1)q}\Big)  2^{-n\alpha_1  q}\xi_n^{ q}\\
        &\simeq  \sum\limits_{n\in\IZ} 2^{-n(\beta-\alpha_0)q}\cdot 2^{-n\alpha_0 q}\xi^{q}_n + \sum\limits_{n\in\IZ} 2^{-n(\beta-\alpha_1)q} \cdot  2^{-n\alpha_1  q}\xi_n^{ q}\\
        &=\sum\limits_{n\in\IZ} 2^{-n\beta q}\xi^{q}_n\\
        &\simeq \|f\|_{\Z^{p,q,r}_\beta}^q,
    \end{align*}
    where we used the definition of $\xi_n$ in \eqref{eq: xin}. This proves the second step.\\

    \noindent \textbf{Step 3: Conclusion.}\\
    Take $0<\tilde q < \min\{q,q_1,q_2\}$ and use twice the embedding of $\Z$-spaces (Theorem~\ref{thm: weigthed embedding}), as well as the previous steps, to conclude
    \begin{align*}
         \Z^{p, q,r}_{\beta} &\subset (\Z^{p,\tilde q,r}_{\beta_0} , \Z^{p,\tilde q,r}_{\beta_1} )_{\theta, q} \\
         &\subset (\Z^{p, q_0,r}_{\beta_0} , \Z^{p, q_1,r}_{\beta_1} )_{\theta, q} \\
         &\subset (\Z^{p,\infty,r}_{\beta_0} , \Z^{p,\infty,r}_{\beta_1} )_{\theta, q} \subset  \Z^{p, q,r}_{\beta},
    \end{align*}
    which proves the claim.
\end{proof}

This interpolation result is for fixed parameter $p$. If we want to interpolate the parameter $p$, we have the following result.

\begin{theorem}[Real interpolation of $\Z$-spaces II]
    \label{thm: real interpolation Z spaces II}
    Let $0< p_0,q_0,p_1,q_1,r\leq \infty$. Furthermore, let $\beta_0,\beta_1\in\IR$ and $\theta\in (0,1)$. Then, we have
    \begin{align*}
        (\Z^{p_0,q_0,r}_{\beta_0} , \Z^{p_1,q_1,r}_{\beta_1} )_{\theta, p} = \Z^{p, p,r}_{\beta},
    \end{align*}
    where $\beta = (1-\theta)\beta_0 + \theta \beta_1$ and $\frac{1}{p} = \frac{1-\theta}{p_0} + \frac{\theta}{p_1} = \frac{1-\theta}{q_0} + \frac{\theta}{q_1}$.
\end{theorem}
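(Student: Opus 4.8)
The plan is to exploit the dyadic characterization of Proposition~\ref{prop: dyadic char} to turn the assertion into a real interpolation question for (weighted) scalar sequence spaces of Besov type, for which the answer is classical. As in the previous proofs I would work with finite exponents only, the infinite cases being covered by the usual modifications. For $f\in\L^0(\IR^{d+1}_+)$ and $Q\in\square$ of generation $\kappa(Q)\in\IZ$, put $a_Q(f)\coloneqq\|f\|_{\L^r(\bar Q,\,\frac{\d y\,\d s}{s^{d+1}})}$. Since $\ell(Q)^d=2^{\kappa(Q)d}$, Proposition~\ref{prop: dyadic char} rewrites as
\begin{align*}
    \|f\|_{\Z^{p,q,r}_\beta}\simeq\Big(\sum_{k\in\IZ}2^{-k(\beta-d/p)q}\,\big\|(a_Q(f))_{Q\in\square_k}\big\|_{\ell^p}^q\Big)^{1/q}\eqqcolon\big\|(a_Q(f))_Q\big\|_{\fb^{\beta-d/p}_{p,q}},
\end{align*}
where $\fb^{\gamma}_{p,q}$ denotes the space of scalar sequences $(c_Q)_{Q\in\square}$ with this (quasi-)norm finite. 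Setting $\gamma_i\coloneqq\beta_i-d/p_i$, the hypotheses on $\beta$ and $p$ give $\gamma\coloneqq\beta-d/p=(1-\theta)\gamma_0+\theta\gamma_1$.

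The heart of the argument would be the pointwise-in-$t$ equivalence
\begin{align*}
    K\big(f,t,\Z^{p_0,q_0,r}_{\beta_0},\Z^{p_1,q_1,r}_{\beta_1}\big)\simeq K\big((a_Q(f))_Q,\,t,\,\fb^{\gamma_0}_{p_0,q_0},\fb^{\gamma_1}_{p_1,q_1}\big),\qquad t>0,
\end{align*}
with constants independent of $f$ and $t$. For ``$\gtrsim$'' one uses (quasi-)subadditivity of $f\mapsto a_Q(f)$: a decomposition $f=f_0+f_1$ forces $(a_Q(f))_Q\le c_r\big((a_Q(f_0))_Q+(a_Q(f_1))_Q\big)$ entrywise, and since the $\fb$-spaces are solid this produces a splitting of $(a_Q(f))_Q$ controlled by $\|f_0\|_{\Z^{p_0,q_0,r}_{\beta_0}}$ and $\|f_1\|_{\Z^{p_1,q_1,r}_{\beta_1}}$. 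For ``$\lesssim$'' one reverses this: given $(a_Q(f))_Q=u+v$ with $u_Q,v_Q\ge0$ and $u_Q+v_Q=a_Q(f)$, the functions $f_0\coloneqq\sum_Q\frac{u_Q}{a_Q(f)}\mathbf{1}_{\bar Q}f$ and $f_1\coloneqq\sum_Q\frac{v_Q}{a_Q(f)}\mathbf{1}_{\bar Q}f$ (with $0/0\coloneqq0$) obey $f_0+f_1=f$ and $a_Q(f_0)=u_Q$, $a_Q(f_1)=v_Q$, because the Whitney boxes $\bar Q$ partition $\IR^{d+1}_+$; hence $\|f_0\|_{\Z^{p_0,q_0,r}_{\beta_0}}\simeq\|u\|_{\fb^{\gamma_0}_{p_0,q_0}}$ and $\|f_1\|_{\Z^{p_1,q_1,r}_{\beta_1}}\simeq\|v\|_{\fb^{\gamma_1}_{p_1,q_1}}$ by the dyadic characterization. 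Raising the $K$-functionals to the $p$-th power, weighting by $t^{-\theta p}$ and integrating $\frac{\d t}{t}$ then shows that $f\in(\Z^{p_0,q_0,r}_{\beta_0},\Z^{p_1,q_1,r}_{\beta_1})_{\theta,p}$ if and only if $(a_Q(f))_Q\in(\fb^{\gamma_0}_{p_0,q_0},\fb^{\gamma_1}_{p_1,q_1})_{\theta,p}$, with equivalent (quasi-)norms.

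It then remains to identify the interpolation space of the scalar sequence spaces. Under $\tfrac1p=\tfrac{1-\theta}{p_0}+\tfrac\theta{p_1}=\tfrac{1-\theta}{q_0}+\tfrac\theta{q_1}$ and $\gamma=(1-\theta)\gamma_0+\theta\gamma_1$, the classical real interpolation of these Besov-type sequence spaces (the sequence-space content of \cite[Thm.\@ 2.4.2]{Triebel1}; see also \cite{Bergh_Loefstroem}) gives
\begin{align*}
    \big(\fb^{\gamma_0}_{p_0,q_0},\fb^{\gamma_1}_{p_1,q_1}\big)_{\theta,p}=\fb^{\gamma}_{p,p},
\end{align*}
both when $\gamma_0\ne\gamma_1$ and when $\gamma_0=\gamma_1$. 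Applying Proposition~\ref{prop: dyadic char} once more with the diagonal exponent $q=p$ — where $\fb^{\gamma}_{p,p}$ is merely a weighted $\ell^p$-space over $\square$ and $\|(a_Q(f))_Q\|_{\fb^{\gamma}_{p,p}}\simeq\|f\|_{\Z^{p,p,r}_\beta}$ — one concludes $(\Z^{p_0,q_0,r}_{\beta_0},\Z^{p_1,q_1,r}_{\beta_1})_{\theta,p}=\Z^{p,p,r}_\beta$ with equivalent norms.

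I expect the main obstacle to be exactly this last sequence-space identity. It is the ``diagonal'' case in which the interpolation parameter coincides with the target integrability, so that the a priori mixed-norm interpolant must collapse to a single $\fb$-space; this is precisely the point at which the hypothesis $\tfrac1p=\tfrac{1-\theta}{q_0}+\tfrac\theta{q_1}$ is used, and some care will be needed to transfer the standard (inhomogeneous, Banach) statements to the present homogeneous quasi-Banach setting. By contrast, the reduction to scalar sequences in the first two steps is what renders the fixed inner space $\L^r(\bar Q)$ harmless.
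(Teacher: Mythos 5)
Your proof is correct and lands on the same central reduction as the paper (dyadic characterization $\to$ interpolation of weighted sequence spaces), but the mechanism by which you reach the sequence spaces is genuinely different. The paper constructs an explicit \emph{linear} isomorphism $I\colon f\mapsto (2^{kd/p}f_{k,x})_{k,x}$ between $\Z^{p,q,r}_\beta$ and the \emph{vector-valued} sequence space $\ell^q_\beta(\IZ;\ell^p(\IZ^d;\L^r(\bar Q_0)))$, with a two-sided inverse $I^{-1}$ that works simultaneously on all parameters; it then appeals to the functoriality of the real method together with Peetre's interpolation theorem for weighted $\ell^q$-spaces (applied twice, to peel off the two outer layers). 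You instead absorb the $\L^r$-layer into the scalar coefficients $a_Q(f)=\|f\|_{\L^r(\bar Q,\,\d y\,\d s/s^{d+1})}$, which collapses the target to the scalar sequence space $\ell^p_\gamma(\ell^p)$. The price is that $f\mapsto (a_Q(f))_Q$ is no longer linear, only quasi-subadditive, so the functorial argument is unavailable; you replace it by a hands-on two-sided comparison of $K$-functionals. Your argument for ``$\lesssim$'' exploits that the Whitney boxes $\bar Q$ partition $\IR^{d+1}_+$ to reconstruct a genuine decomposition $f=f_0+f_1$ with $a_Q(f_0)=u_Q$, $a_Q(f_1)=v_Q$ exactly; the ``$\gtrsim$'' direction uses solidity of $\fb^\gamma_{p,q}$ together with the entrywise quasi-subadditivity bound $a_Q(f)\le c_r(a_Q(f_0)+a_Q(f_1))$. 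Both steps are sound (including the standard reduction to nonnegative decompositions, which is worth spelling out). The approaches thus trade off: the paper's is cleaner and directly functorial but carries the $\L^r$-factor; yours is more elementary on the sequence side but needs the bespoke $K$-functional comparison. Both ultimately rest on the same sequence-space interpolation identity under the diagonal hypothesis $\tfrac1p=\tfrac{1-\theta}{p_0}+\tfrac\theta{p_1}=\tfrac{1-\theta}{q_0}+\tfrac\theta{q_1}$ --- your worry about this last identity is justified but applies equally to the paper's citation of Peetre \cite{Peetre}; you should cite Peetre's theorem directly rather than extracting ``sequence-space content'' from \cite[Thm.\@ 2.4.2]{Triebel1}, which is a function-space statement.
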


\begin{proof}
    We use the dyadic description of $\Z^{p,q,r}_{\beta}$ (Proposition~\ref{prop: dyadic char}) to reduce matters to real interpolation of vector-valued sequence spaces. We divide the proof into two steps.\\

    \noindent \textbf{Step 1: Embed $\Z^{p,q,r}_\beta$ into vector-valued sequence space.}
    Fix the Whitney box $\bar Q_0 = (1/2 , 1) \times Q_0$ associated to the unit cube $Q_0 = [0,1)^d$. Let $\bar Q \in G$ with $Q =2^kx+[0,2^k)^d$ for some $x\in \IZ^d$ and $k\in \IZ$. Then, for a measurable function $f$ on $\IR^{d+1}_+$, we define the affine reparamerization
    \begin{align*}
        f_{k,x}(s,y) \coloneqq f(2^ks , 2^ky +2^kx).
    \end{align*}
    By Proposition~\ref{prop: dyadic char}, we have
    \begin{align}
    \label{eq: isoC}
        \|f\|_{\Z^{p,q,r}_\beta}
        &\simeq \Big(\sum\limits_{k\in\IZ} 2^{-k\beta q}\Big(\sum\limits_{x\in\IZ^d} \|2^{\frac{kd}{p}} f_{k,x}\|_{\L^r(\bar Q_0; \frac{\d y \d s}{s^{d+1}})}^p\Big)^\frac{q}{p}\Big)^\frac{1}{q}
        =\big\| 2^\frac{kd}{p}f_{k,x} \big\|_{\ell^q_\beta(\IZ; \ell^p(\IZ^d; \L^r(\bar Q_0; \frac{\d y \d s}{s^{d+1}})))}.
    \end{align}
    In the following, we use the shorter notation
    \begin{align*}
        \ell^q_\beta\ell^p\L^r \coloneqq \ell^q_\beta(\IZ; \ell^p(\IZ^d; \L^r(\bar Q_0; \frac{\d y \d s}{s^{d+1}}))).
    \end{align*}
    Now, we define (formally) the map
    \begin{align}
    \label{eq: def I}
        I: f(s,y) \mapsto 2^{\frac{kd}{p}}f_{k,x}(s,y).
    \end{align}
    By~\eqref{eq: isoC}, we find that $I$ restricts to a well-defined bounded map $\Z^{p,q,r}_\beta \to\ell^q_\beta\ell^p\L^r$ satisfying $\|I(f)\|_{\ell^q_\beta\ell^p\L^r}\simeq \|f\|_{\Z^{p,q,r}_\beta}$.
    Next, we construct an inverse map to $I$. To this end, we take some arbitrary but fixed sequence
    \begin{align*}
        g =((g_{k,x})_{x\in\IZ^d})_{k\in\IZ}
    \end{align*}
    of measurable functions on $\bar Q_0$, and define the function
    \begin{align*}
        f(s,y) \coloneqq \sum\limits_{k\in\IZ} 2^{-\frac{dk}{p}}\sum\limits_{x\in\IZ^d} \mathbf{1}_{\bar Q_0}(2^{-k}s,2^{-k}y-x) \, g_{k,x}(2^{-k}s,2^{-k}y- x).
    \end{align*}
    Note that owing to the indicator function, convergence of the sums is trivial.
    As a limit of measurable functions, $f$ is again measurable. Moreover, for $m\in\IZ$ and $z\in\IZ^d$, one has for $(s,y)\in \bar Q_0$ that
    \begin{align*}
        f_{m,z}(s,y) &=f(2^ms , 2^m(y+z))\\
        &=\sum\limits_{k\in\IZ} 2^{-\frac{dk}{p}}\sum\limits_{x\in\IZ^d} \mathbf{1}_{\bar Q_0}(2^{m-k}s,2^{m-k}(y+z)-x) \, g_{k,x}(2^{m-k}s , 2^{m-k}(y+z)-x) \\
        &=2^{-\frac{dm}{p}} g_{m,z}(s,y),
    \end{align*}
    by an elementary geometric consideration.
    Hence, by setting $$I^{-1}\bigl[((g_{k,x})_{x\in \IZ^d})_{k\in \IZ} \bigr] \coloneqq f,$$ we have constructed an operator that is well-defined on all sequence spaces of measurable functions and satisfies $I^{-1} \colon \ell^q_\beta\ell^p\L^r \to \Z^{p,q,r}_\beta$ as well as $I \circ I^{-1} = \Id = I^{-1} \circ I$. \\

    \noindent \textbf{Step 2: Conclusion.}
    By Step~1, we know that $I$ and $I^{-1}$ are morphisms between the interpolation couples $(\Z^{p_0,q_0,r}_{\beta_0}, \Z^{p_1,q_1,r}_{\beta_1})$ and $(\ell^{q_0}_{\beta_0}\ell^{p_0}\L^r, \ell^{q_1}_{\beta_1}\ell^{p_1}\L^r)$ which restrict to bounded maps $\Z^{p_i,q_i,r}_{\beta_i} \to \ell^{q_i}_{\beta_i}\ell^{p_i}\L^r$ and $\ell^{q_i}_{\beta_i}\ell^{p_i}\L^r \to \Z^{p_i,q_i,r}_{\beta_i}$ for $i \in \{0,1\}$, respectively. Hence, $(\theta,p)$-real interpolation yields
    \begin{align}
        I &\colon (\Z^{p_0,q_0,r}_{\beta_0} , \Z^{p_1,q_1,r}_{\beta_1} )_{\theta, p} \to (\ell^{q_0}_{\beta_0}\ell^{p_0}\L^r , \ell^{q_1}_{\beta_1}\ell^{p_1}\L^r)_{\theta, p}, \\
        I^{-1} &\colon (\ell^{q_0}_{\beta_0}\ell^{p_0}\L^r , \ell^{q_1}_{\beta_1}\ell^{p_1}\L^r)_{\theta, p} \to (\Z^{p_0,q_0,r}_{\beta_0} , \Z^{p_1,q_1,r}_{\beta_1} )_{\theta, p}.
    \end{align}
    Since $I \circ I^{-1} = \Id = I^{-1} \circ I$, this shows the ismorphism property $$(\Z^{p_0,q_0,r}_{\beta_0} , \Z^{p_1,q_1,r}_{\beta_1} )_{\theta, p} \simeq (\ell^{q_0}_{\beta_0}\ell^{p_0}\L^r , \ell^{q_1}_{\beta_1}\ell^{p_1}\L^r)_{\theta, p}.$$
    We claim that we can compute the space on the right-hand side of this identity as
    \begin{align}
    \label{eq: int ident l}
        (\ell^{q_0}_{\beta_0}\ell^{p_0}\L^r , \ell^{q_1}_{\beta_1}\ell^{p_1}\L^r)_{\theta, p} = \ell^{p}_{\beta}\ell^{p}\L^r.
    \end{align}
    Admitting this, the proof is complete, as $I^{-1}$ again identifies $\ell^{p}_{\beta}\ell^{p}\L^r$ with $\Z^{p,p,r}$.

    Finally, to prove \eqref{eq: int ident l}, we apply Peetre's interpolation result \cite[Chp.\@ 5, Thm.\@ 4, p.\@ 98]{Peetre} twice, to get
    \begin{align*}
        (\ell^{q_0}_{\beta_0}\ell^{p_0}\L^r , \ell^{q_1}_{\beta_1}\ell^{p_1}\L^r)_{\theta, p}  = \ell^p_\beta\big((\ell^{p_0}\L^r , \ell^{p_1}\L^r)_{\theta, p}\big) =  \ell^p_\beta\big( \ell^{p}\big((\L^r,\L^r)_{\theta, p}\big)\big) = \ell^{p}_{\beta}\ell^{p}\L^r,
    \end{align*}
    where all parameters are as in the claim.

\end{proof}

Lastly, we want to show that our $\Z^{p,q,r}_\beta$-spaces can be derived from real interpolation of $\T^{p,q,r}_\beta$-spaces, in the same way as Besov spaces can be derived from Triebel--Lizorkin spaces; see \cite[Thm.\@ 2.4.2]{Triebel1}. Recall that we did not define the scale of spaces $\T^{p,q,r}_\beta$ for $p=\infty$. In particular, we will not provide a characterization of $\Z^{\infty,q,r}_\beta$ by means of real interpolation of tent spaces here. This is done in \cite[Prop.~3.31]{Haardt}.\\

The characterization of $\Z^{p,q,r}_\beta$-spaces by means of tent spaces relies on the following nesting property.

\begin{lemma}
    For $0<p<\infty$ and $0< q, r\leq \infty$ we have the continuous embeddings
    \begin{align*}
         \Z^{p,\min\{p,q\},r}_\beta\hookrightarrow \T^{p,q,r}_\beta \hookrightarrow \Z^{p,\max\{p,q\},r}_\beta.
    \end{align*}
\end{lemma}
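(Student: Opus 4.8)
The plan is to prove the two embeddings separately, and in each case the mechanism is the elementary nesting of $\ell^q$ spaces, $\ell^{q_0} \hookrightarrow \ell^{q_1}$ for $q_0 \leq q_1$, applied to the $t$-variable after the spatial $\L^p$-norm has been taken, together with the Minkowski-type exchange of a $\L^p$-norm with an $\ell^q$-(quasi-)norm when $p$ and $q$ are in the favorable order. Concretely, write
\[
    F(t,x) \coloneqq \bigg(\fint\limits_{\frac{t}{2}}^t \fint\limits_{B(x,t)} |s^{-\beta}f(s,y)|^r \, \d y \d s\bigg)^\frac{1}{r},
\]
so that $\|f\|_{\Z^{p,q,r}_\beta} = \| \, \|F(t,\cdot)\|_{\L^p} \,\|_{\L^q((0,\infty), \d t/t)}$ and $\|f\|_{\T^{p,q,r}_\beta} = \| \, \|F(\cdot,x)\|_{\L^q((0,\infty),\d t/t)} \, \|_{\L^p}$. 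Both quantities are built from the single function $F \geq 0$; the lemma is then a purely abstract statement about mixed-norm spaces $\L^p_x \L^q_t$ versus $\L^q_t \L^p_x$ of $F$, with the weight $\d t / t$ on the $t$-axis and Lebesgue measure on $\IR^d$.

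For the right-hand embedding $\T^{p,q,r}_\beta \hookrightarrow \Z^{p,\max\{p,q\},r}_\beta$: if $q \geq p$ then $\max\{p,q\} = q$ and I would apply Minkowski's integral inequality in the form $\| \, \|F(t,\cdot)\|_{\L^p} \, \|_{\L^q_t} \leq \| \, \|F(\cdot,x)\|_{\L^q_t} \, \|_{\L^p}$, valid precisely because $q/p \geq 1$ (this is the statement that the $\L^{q/p}$-norm of an integral is at most the integral of the $\L^{q/p}$-norms, applied to $|F|^p$ and then taking $p$-th roots). If $q \leq p$ then $\max\{p,q\} = p$, and I first use $\ell^q \hookrightarrow \ell^p$ in the $t$-variable pointwise in $x$, i.e. $\|F(\cdot,x)\|_{\L^p_t} \leq \|F(\cdot,x)\|_{\L^q_t}$, and then take $\L^p$-norms in $x$ on both sides to get $\|f\|_{\Z^{p,p,r}_\beta} \leq \|f\|_{\T^{p,q,r}_\beta}$; here the outer norm is the same $\L^p_x$ on both sides so no exchange is needed. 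Symmetrically, for the left-hand embedding $\Z^{p,\min\{p,q\},r}_\beta \hookrightarrow \T^{p,q,r}_\beta$: if $q \leq p$ then $\min\{p,q\}=q$ and I apply Minkowski again, now in the reverse-favorable direction $\| \, \|F(\cdot,x)\|_{\L^q_t} \, \|_{\L^p} \leq \| \, \|F(t,\cdot)\|_{\L^p} \, \|_{\L^q_t}$, which holds because now $p/q \geq 1$; if $q \geq p$ then $\min\{p,q\}=p$ and I use $\ell^p \hookrightarrow \ell^q$ in $t$ pointwise in $x$, i.e. $\|F(\cdot,x)\|_{\L^q_t} \leq \|F(\cdot,x)\|_{\L^p_t}$, followed by taking $\L^p_x$-norms.

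I do not anticipate a genuine obstacle here; the only point requiring a little care is the bookkeeping of which of the two classical facts (Minkowski's integral inequality versus $\ell$-space nesting) applies in which of the four subcases, and making sure the Minkowski inequality is invoked with the correct exponent $\geq 1$ (namely $q/p$ in one embedding and $p/q$ in the other). I would also remark that when $p=q$ all four statements collapse to the identity $\T^{p,p,r}_\beta = \Z^{p,p,r}_\beta$, consistent with Remark~\ref{rem: comparison of known spaces}, and that the cases where $q$, $r$, or $p$ is infinite are handled by the usual replacement of the integral by an essential supremum, with the inequalities above going through verbatim since $\esssup$ also satisfies Minkowski and the trivial monotonicity. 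Finally, the constants are all equal to $1$, so the embeddings are in fact contractive, though for the statement it suffices to record continuity.
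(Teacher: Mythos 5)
Your proposal breaks the lemma into four sub-cases and uses Minkowski's integral inequality in two of them and the nesting of $\ell$-spaces in the other two. The Minkowski sub-cases are fine, but the nesting sub-cases contain a genuine gap. When you write, for $q \leq p$, that $\|F(\cdot,x)\|_{\L^p_t} \leq \|F(\cdot,x)\|_{\L^q_t}$ ``pointwise in $x$'', and symmetrically for $q\geq p$, you are invoking the inclusion $\L^{q}\big((0,\infty),\tfrac{\d t}{t}\big) \subset \L^{p}\big((0,\infty),\tfrac{\d t}{t}\big)$ for $q\leq p$. That inclusion is \emph{false}: $\tfrac{\d t}{t}$ is an infinite measure on $(0,\infty)$, and for infinite measures neither nesting of Lebesgue spaces holds in general. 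The $\ell^{q}\hookrightarrow\ell^{p}$ nesting that you invoke by name is a \emph{discrete} phenomenon; it does not transfer to the continuous $\L^q_t$ norms unless you first discretize.

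This is precisely the part of the argument that carries all the weight in the paper's proof, and it uses the specific structure of $F$: the paper decomposes the $t$-integral into dyadic blocks $[2^k,2^{k+1})$, replaces the inner Whitney average on each block by a slightly enlarged average $\fint_{2^{k-1}}^{2^{k+1}}\fint_{B(x,2^{k+1})}$ that is \emph{independent of $t$} on the block but dominates the original one (with comparable measures), so that the continuous $\L^q_t$ norm becomes comparable to a genuine $\ell^q(\IZ)$ norm of a sequence $(c_k)_{k\in\IZ}$. Only \emph{then} does the discrete $\ell^{\min\{p,q\}}\hookrightarrow\ell^q$ nesting apply. The argument is closed by re-introducing the $t$-integral on each dyadic block, shrinking back to a $t$-dependent (enlarged) Whitney average, applying Minkowski (with exponent $p/\min\{p,q\}\geq 1$) to move $\L^p_x$ inside $\L^{\min\{p,q\}}_t$, and finally invoking the change-of-Whitney-parameters result (Proposition~\ref{prop: independent Whitney cube}) to absorb the enlarged box constants. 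None of this case-splitting that you propose is needed: setting $\alpha=\min\{p,q\}$, both the discrete nesting ($\alpha\leq q$) and Minkowski ($\alpha\leq p$) are simultaneously available, with one of them trivial depending on the sign of $p-q$. To repair your proof, you would need to insert the dyadic-block discretization and the Whitney-box enlargement before appealing to any $\ell$-space nesting, and you would also need Proposition~\ref{prop: independent Whitney cube} at the end, neither of which appears in your sketch.
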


\begin{proof}
    We only showcase the first inclusion, the second one follows by a similar calculation. For notational simplicity, we only treat the case $q,r$ finite. Set $\alpha = \min\{p,q\}$ and fix $x\in\IR^d$. Using the embedding $\ell^\alpha(\IZ) \subset \ell^q(\IZ)$ and removing/introducing averaging integrals, we get
    \begin{align*}
        \bigg\| \bigg(\fint\limits_{\frac{t}{2}}^t \fint\limits_{B(x,t)} |s^{-\beta}f(s,y)|^r \,\d y \d s\bigg)^\frac{1}{r}\bigg\|_{\L^q_t}
        &=\bigg(\sum\limits_{k\in\IZ} \int\limits_{2^k}^{2^{k+1}} \bigg(\fint\limits_{\frac{t}{2}}^t \fint\limits_{B(x,t)} |s^{-\beta}f(s,y)|^r \,\d y \d s\bigg)^\frac{q}{r}\,\frac{\d t}{t}\bigg)^\frac{1}{q}\\
        &\lesssim \bigg(\sum\limits_{k\in\IZ}  \bigg(\fint\limits_{2^{k-1}}^{2^{k+1}} \fint\limits_{B(x,2^{k+1})} |s^{-\beta}f(s,y)|^r \,\d y \d s\bigg)^\frac{q}{r}\bigg)^\frac{1}{q}\\
        &\leq \bigg(\sum\limits_{k\in\IZ}  \bigg(\fint\limits_{2^{k-1}}^{2^{k+1}} \fint\limits_{B(x,2^{k+1})} |s^{-\beta}f(s,y)|^r \,\d y \d s\bigg)^\frac{\alpha}{r}\bigg)^\frac{1}{\alpha}\\
        &\simeq \bigg(\sum\limits_{k\in\IZ}  \int\limits_{2^k}^{2^{k+1}} \bigg(\fint\limits_{2^{k-1}}^{2^{k+1}} \fint\limits_{B(x,2^{k+1})} |s^{-\beta}f(s,y)|^r \,\d y \d s\bigg)^\frac{\alpha}{r} \,\frac{\d t}{t}\bigg)^\frac{1}{\alpha}\\
        &\lesssim \bigg(\sum\limits_{k\in\IZ}  \int\limits_{2^k}^{2^{k+1}} \bigg(\fint\limits_{\frac{t}{4}}^{2t} \fint\limits_{B(x,2t)} |s^{-\beta}f(s,y)|^r \,\d y \d s\bigg)^\frac{\alpha}{r} \,\frac{\d t}{t}\bigg)^\frac{1}{\alpha}\\
        &= \bigg(\int\limits_0^\infty \bigg(\fint\limits_{\frac{t}{4}}^{2t} \fint\limits_{B(x,2t)} |s^{-\beta}f(s,y)|^r \,\d y \d s\bigg)^\frac{\alpha}{r} \,\frac{\d t}{t}\bigg)^\frac{1}{\alpha}.
    \end{align*}
    Thus, applying the $\L^p$-norm in $x$ on both sides of the upper inequality and using Minkowski's integral (using $\alpha \leq p$) inequality yields
    \begin{align*}
        \|f\|_{\T^{p,q,r}_\beta} &\leq \bigg\| \bigg\| \bigg(\fint\limits_{\frac{t}{4}}^{2t} \fint\limits_{B(x,2t)} |s^{-\beta}f(s,y)|^r \,\d y \d s\bigg)^\frac{1}{r} \bigg\|_{\L^\alpha_t} \bigg\|_{\L^p_{\vphantom{t}x}}\\
        &\leq \bigg\| \bigg\| \bigg(\fint\limits_{\frac{t}{4}}^{2t} \fint\limits_{B(x,2t)} |s^{-\beta}f(s,y)|^r \,\d y \d s\bigg)^\frac{1}{r} \bigg\|_{\L^p_{\vphantom{t}x}} \bigg\|_{\L^\alpha_t}\\
        &\lesssim \|f\|_{\Z^{p,\alpha,r}_\beta},
    \end{align*}
    where we used a change of Whitney parameter in the last inequality, see Proposition~\ref{prop: independent Whitney cube}.
\end{proof}

\begin{proposition}[Real interpolation of tent spaces]
\label{prop: real int tent space}
    Let $0<p<\infty$ and $0< q_0,q_1,q, r\leq \infty$. Furthermore, let $\beta_0,\beta_1\in\IR$ with $\beta_0\neq \beta_1$ and $\theta\in (0,1)$. Then, we have
    \begin{align*}
        (\T^{p,q_0,r}_{\beta_0} , \T^{p,q_1,r}_{\beta_1} )_{\theta, q} = \Z^{p,q,r}_\beta,
    \end{align*}
    where $\beta = (1-\theta)\beta_0 + \theta \beta_1$.
\end{proposition}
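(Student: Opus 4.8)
The plan is to sandwich the real interpolation space $(\T^{p,q_0,r}_{\beta_0},\T^{p,q_1,r}_{\beta_1})_{\theta,q}$ between two $\Z$-spaces that both collapse to $\Z^{p,q,r}_\beta$ after applying Theorem~\ref{thm: real int of Z-spaces}. Concretely, I would combine the nesting property just proved, namely
\begin{align*}
    \Z^{p,\min\{p,q_i\},r}_{\beta_i}\hookrightarrow \T^{p,q_i,r}_{\beta_i}\hookrightarrow \Z^{p,\max\{p,q_i\},r}_{\beta_i}\qquad (i=0,1),
\end{align*}
with the monotonicity of real interpolation in the ambient couple. Since the inclusions are continuous, functoriality of the $(\theta,q)$-real interpolation functor gives
\begin{align*}
    \bigl(\Z^{p,\min\{p,q_0\},r}_{\beta_0},\Z^{p,\min\{p,q_1\},r}_{\beta_1}\bigr)_{\theta,q}\hookrightarrow (\T^{p,q_0,r}_{\beta_0},\T^{p,q_1,r}_{\beta_1})_{\theta,q}\hookrightarrow \bigl(\Z^{p,\max\{p,q_0\},r}_{\beta_0},\Z^{p,\max\{p,q_1\},r}_{\beta_1}\bigr)_{\theta,q}.
\end{align*}

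Next I would evaluate the outer spaces. Theorem~\ref{thm: real int of Z-spaces} computes $(\Z^{p,a,r}_{\beta_0},\Z^{p,b,r}_{\beta_1})_{\theta,q}=\Z^{p,q,r}_\beta$ for \emph{any} choice of the inner parameters $a,b\in(0,\infty]$, because there the target parameter $q$ is unrelated to $q_0,q_1$; the only hypotheses used are $\beta_0\neq\beta_1$ and $\theta\in(0,1)$, which hold here. Applying this once with $(a,b)=(\min\{p,q_0\},\min\{p,q_1\})$ and once with $(a,b)=(\max\{p,q_0\},\max\{p,q_1\})$ shows that both the left-hand and the right-hand space in the display above equal $\Z^{p,q,r}_\beta$. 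Hence the middle space is squeezed between two copies of $\Z^{p,q,r}_\beta$ with continuous inclusions, which forces $(\T^{p,q_0,r}_{\beta_0},\T^{p,q_1,r}_{\beta_1})_{\theta,q}=\Z^{p,q,r}_\beta$ with equivalence of (quasi-)norms. For the bookkeeping one should note that all the couples involved are legitimate interpolation couples: every space embeds into $\L^0(\IR^{d+1}_+)$ (equivalently into the ambient $\Z^{p,\infty,r}_{\beta_0}+\Z^{p,\infty,r}_{\beta_1}$-type space), so the chain of inclusions takes place inside a common Hausdorff topological vector space, and the functoriality statement applies verbatim.

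The only point that needs a small remark rather than genuine work is the case where some $q_i=\infty$: then $\min\{p,q_i\}=p$ and $\max\{p,q_i\}=\infty$, and the nesting lemma was stated for $0<q_i\leq\infty$, so it covers this too; Theorem~\ref{thm: real int of Z-spaces} likewise allows infinite inner exponents. Thus the argument is uniform in the parameters. I expect the main (very mild) obstacle to be purely expository: making sure that the interpolation functor is applied to couples that are all continuously embedded in one and the same topological vector space, so that the sandwich of inclusions is meaningful and the equalities of the two outer interpolation spaces with $\Z^{p,q,r}_\beta$ can be transported to the middle space. No delicate estimate is involved — the work has already been done in Theorem~\ref{thm: real int of Z-spaces} and in the nesting lemma.
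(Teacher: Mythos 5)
Your proposal is correct and follows exactly the paper's argument: sandwich the tent-space couple between the two $\Z$-space couples via the nesting lemma, apply functoriality of the $(\theta,q)$-functor, and then compute both outer interpolation spaces as $\Z^{p,q,r}_\beta$ using Theorem~\ref{thm: real int of Z-spaces}. The extra remarks on the $q_i=\infty$ endpoints and on the common ambient space are sensible bookkeeping but do not change the route.
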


\begin{proof}
    By the previous lemma, we have the embedding
    \begin{align*}
        (\Z^{p,\min\{p,q_0\},r}_{\beta_0} , \Z^{p,\min\{p,q_1\},r}_{\beta_1} )_{\theta, q}\subset (\T^{p,q_0,r}_{\beta_0} , \T^{p,q_1,r}_{\beta_1} )_{\theta, q} \subset  (\Z^{p,\max\{p,q_0\},r}_{\beta_0} , \Z^{p,\max\{p,q_1\},r}_{\beta_1} )_{\theta, q}.
    \end{align*}
    Hence, calculating the interpolation spaces on the left- and right-hand inclusions by Theorem~\ref{thm: real int of Z-spaces}, it follows
    \begin{align*}
        \Z^{p,q,r}_\beta \subset(\T^{p,q_0,r}_{\beta_0} , \T^{p,q_1,r}_{\beta_1} )_{\theta, q} \subset \Z^{p,q,r}_\beta,
    \end{align*}
    which proves the claim.
\end{proof}
\section{Complex interpolation of $\Z$-spaces}
\label{sec: complex int}

\noindent The goal of this section is to show that the $\Z^{p,q,r}_{\beta}$-spaces form a complex interpolation scale. To do so, we use a refinement of the complex interpolation method for quasi-Banach spaces of Kalton--Mitrea \cite{Kalton_Mitrea} due to Egert and Kosmala \cite{Egert_Kosmala}. If both of the underlying spaces are Banach spaces, then this method coincides with Calderón's complex method \cite{Calderon} (see also \cite[Chp.\@ 4]{Bergh_Loefstroem}). Because a precise construction of complex interpolation spaces is not necessary to understand this section, we use them as a blackbox and refer to the aforementioned references for details instead. Our main result reads as follows.

\begin{theorem}[Complex interpolation]
\label{thm: complex int full range}
    Let $0< p_0,p_1,q_0,q_1, r_0,r_1\leq \infty$ be such that $\max\{q_0 , p_0, r_0\}<\infty$ or $\max\{q_1,p_1,r_1\}<\infty$. Furthermore, let $\beta_0,\beta_1\in\IR$ and $\theta\in (0,1)$. Define
    \begin{align*}
        \frac{1}{p_\theta} = \frac{1-\theta}{p_0} + \frac{\theta}{p_1},\quad \frac{1}{q_\theta} = \frac{1-\theta}{q_0} + \frac{\theta}{q_1},\quad \frac{1}{r_\theta} = \frac{1-\theta}{r_0} + \frac{\theta}{r_1}, \quad \beta_\theta = (1-\theta)\beta_0 + \theta \beta_1.
    \end{align*}
    Then, we have
    \begin{align*}
        [\Z^{p_0,q_0,r_0}_{\beta_0} , \Z^{p_1,q_1,r_1}_{\beta_1} ]_{\theta} = \Z^{p_\theta, q_\theta,r_\theta}_{\beta_\theta}
    \end{align*}
    with equivalent norms.
\end{theorem}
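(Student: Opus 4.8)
\emph{Plan of proof.} The strategy is to first settle the \enquote{reflexive regime} $1<p_i,q_i,r_i<\infty$ ($i=0,1$), where the Kalton--Mitrea--Egert--Kosmala method agrees with Calder\'{o}n's and all spaces are reflexive, and then to bootstrap to the full range by reformulating the interpolation problem as a Calder\'{o}n product and running a convexity (power) argument.

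In the reflexive regime, the inclusion $[\Z^{p_0,q_0,r_0}_{\beta_0},\Z^{p_1,q_1,r_1}_{\beta_1}]_\theta \hookrightarrow \Z^{p_\theta,q_\theta,r_\theta}_{\beta_\theta}$ is read off the embedding $i$ of Lemma~\ref{lem: vv-embedding}: since $i$ maps $\Z^{p_i,q_i,r_i}_{\beta_i}$ boundedly into $\L^{q_i}_{t;\beta_i}\L^{p_i}_x\L^{r_i}_{s,y}$, it maps the interpolation space boundedly into $[\L^{q_0}_{t;\beta_0}\L^{p_0}_x\L^{r_0}_{s,y},\L^{q_1}_{t;\beta_1}\L^{p_1}_x\L^{r_1}_{s,y}]_\theta = \L^{q_\theta}_{t;\beta_\theta}\L^{p_\theta}_x\L^{r_\theta}_{s,y}$ (classical complex interpolation of weighted mixed-norm Lebesgue spaces), and the norm equivalence $\|f\|_{\Z^{p,q,r}_\beta}\simeq\|i(f)\|$ turns this into a $\Z^{p_\theta,q_\theta,r_\theta}_{\beta_\theta}$-bound. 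The reverse inclusion is obtained by duality: in this range all spaces are reflexive, $\Z^{p_0,q_0,r_0}_{\beta_0}\cap\Z^{p_1,q_1,r_1}_{\beta_1}$ is dense in each endpoint (Proposition~\ref{prop: complete and dense subspace}) and in the interpolation space, so $[\Z_0,\Z_1]_\theta' = [\Z_0',\Z_1']_\theta$; by Theorem~\ref{thm: duality banach range} the dual couple is $(\Z^{p_0',q_0',r_0'}_{-\beta_0},\Z^{p_1',q_1',r_1'}_{-\beta_1})$, and applying the inclusion just proved to it gives $[\Z_0,\Z_1]_\theta' \hookrightarrow \Z^{p_\theta',q_\theta',r_\theta'}_{-\beta_\theta} = (\Z^{p_\theta,q_\theta,r_\theta}_{\beta_\theta})'$. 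Together with the dual of the first inclusion this forces $[\Z_0,\Z_1]_\theta' = (\Z^{p_\theta,q_\theta,r_\theta}_{\beta_\theta})'$ with equivalent norms, hence $[\Z_0,\Z_1]_\theta = \Z^{p_\theta,q_\theta,r_\theta}_{\beta_\theta}$ by reflexivity. This is the content of Lemma~\ref{lem: complex int 1<}.

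For the full range, note that each $\Z^{p,q,r}_\beta$ is a quasi-Banach function lattice on $\IR^{d+1}_+$ (Definition~\ref{def: function spaces}, Proposition~\ref{prop: complete and dense subspace}), and the hypothesis that one of the triples $\{p_i,q_i,r_i\}$ consists of finite exponents ensures that the corresponding endpoint has order-continuous (quasi-)norm, compactly supported $\L^r$-functions being dense there (Proposition~\ref{prop: complete and dense subspace}). Under these assumptions the complex method of Egert--Kosmala~\cite{Egert_Kosmala} (which reduces to Calder\'{o}n's in the Banach case) identifies $[\Z^{p_0,q_0,r_0}_{\beta_0},\Z^{p_1,q_1,r_1}_{\beta_1}]_\theta$ with the Calder\'{o}n product $(\Z^{p_0,q_0,r_0}_{\beta_0})^{1-\theta}(\Z^{p_1,q_1,r_1}_{\beta_1})^\theta$, so it remains to compute this product. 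Here one uses the elementary homogeneity relation $\|\,|g|^{N}\,\|_{\Z^{p/N,q/N,r/N}_{N\beta}} = \|g\|^{N}_{\Z^{p,q,r}_\beta}$ for $N\in\IN$ (a direct change of variables in the three nested quasi-norms), that is, $\Z^{p,q,r}_\beta$ is the $N$-th convexification of $\Z^{p/N,q/N,r/N}_{N\beta}$; choosing $N$ so large that $p_i/N,q_i/N,r_i/N>1$ and invoking the standard compatibility of Calder\'{o}n products with convexification, the computation reduces to the reflexive regime, where the Calder\'{o}n product equals the complex interpolation space (Calder\'{o}n's theorem) and hence equals $\Z^{p_\theta/N,q_\theta/N,r_\theta/N}_{N\beta_\theta}$ by the previous paragraph — the relations $1/(p_\theta/N)=(1-\theta)/(p_0/N)+\theta/(p_1/N)$, $N\beta_\theta = (1-\theta)N\beta_0+\theta N\beta_1$, etc., being preserved. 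Undoing the convexification yields $(\Z^{p_0,q_0,r_0}_{\beta_0})^{1-\theta}(\Z^{p_1,q_1,r_1}_{\beta_1})^\theta = \Z^{p_\theta,q_\theta,r_\theta}_{\beta_\theta}$, which combined with the Egert--Kosmala identification completes the proof.

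The main obstacle is the passage to Calder\'{o}n products: one must verify that the $\Z$-spaces lie in the class of quasi-Banach lattices (analytic/lattice convexity together with the relevant completeness and separability) for which the Kalton--Mitrea--Egert--Kosmala theory identifies complex interpolation with the Calder\'{o}n product, and make precise the role played by the finiteness assumption on the exponents. Beyond this, the only non-self-contained ingredients are the two standard facts used as black boxes — complex interpolation of weighted mixed-norm $\L^p$-spaces and the commutation of Calder\'{o}n products with convexification — along with the density statements needed to run the duality step in the reflexive regime.
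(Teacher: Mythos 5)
Your overall strategy is the same as the paper's: establish a Banach-range base case via the vector-valued embedding (one inclusion) and duality (the other), then pass to the full range by identifying the complex interpolation space with a Calder\'{o}n product (Egert--Kosmala) and invoking compatibility with convexification. However, the way you set up the reduction leaves a genuine gap.

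The substantive problem is the range of your base case. You formulate the \enquote{reflexive regime} as $1<p_i,q_i,r_i<\infty$ for \emph{both} $i=0,1$, but the theorem permits some of these parameters to equal $\infty$ as long as one triple is entirely finite, and convexification does not eliminate infinite exponents: if $p_1=\infty$ then $p_1/\alpha=\infty$ for every $\alpha$. So after convexifying you may still be facing, say, $\Z^{\infty,\infty,\infty}_{\alpha\beta_1}$ on one side, and your base lemma does not apply. The paper's Lemma~\ref{lem: complex int 1<} is deliberately stated for $1<p_i,q_i,r_i\leq\infty$ with one triple finite, and its duality step is arranged so that only one endpoint needs to be reflexive: since $p_i,q_i,r_i>1$ the dual exponents are automatically finite, so the intersection of the dual spaces is dense, and at least one dual space is reflexive because one original triple is all finite; that is exactly the input for \cite[Cor.\@ 4.5.2]{Bergh_Loefstroem}. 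Your shortcut \enquote{all spaces are reflexive} would have to be replaced by this one-sided argument, and the duality step rewritten accordingly; as written your bootstrap simply does not reach the cases with infinite exponents.

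There is also a direction error in the convexification bookkeeping. You use the (correct) identity $\||g|^N\|_{\Z^{p/N,q/N,r/N}_{N\beta}}=\|g\|^N_{\Z^{p,q,r}_\beta}$ but then ask to choose $N\in\IN$ \enquote{so large} that $p_i/N,q_i/N,r_i/N>1$; increasing $N$ makes $p_i/N$ smaller, and for $p_i\leq1$ (allowed by the theorem) no integer $N\geq 1$ gives $p_i/N>1$ at all. The correct move, as in the paper, is to take a real parameter $0<\alpha<\min\{p_0,p_1,q_0,q_1,r_0,r_1\}$ and use $[\Z^{p_i,q_i,r_i}_{\beta_i}]^\alpha=\Z^{p_i/\alpha,q_i/\alpha,r_i/\alpha}_{\alpha\beta_i}$ (Lemma~\ref{lem: Z-space convex}), so that all convexified exponents exceed $1$. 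Once these two points are repaired — and you also record, as the paper does, that $\Z^{p,q,r}_\beta$ is $A$-convex and separable under the finiteness hypothesis so that Proposition~\ref{prop: egert-kosmala} applies — your argument coincides with the paper's.
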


Before proving this result in the full quasi-Banach range, we start with the corresponding result in the Banach range, which is based on the vector-valued embedding (Lemma~\ref{lem: vv-embedding}) and duality (Theorem~\ref{thm: duality banach range}).

\begin{lemma}
\label{lem: complex int 1<}
    Let $1< p_0,p_1,q_0,q_1, r_0,r_1\leq \infty$ be such that $\max\{q_0, p_0, r_0\}<\infty$ or $\max\{q_1, p_1, r_1\}<\infty$. Furthermore, let $\beta_0,\beta_1\in\IR$ and $\theta\in (0,1)$. Define
    \begin{align*}
        \frac{1}{p_\theta} = \frac{1-\theta}{p_0} + \frac{\theta}{p_1},\quad \frac{1}{q_\theta} = \frac{1-\theta}{q_0} + \frac{\theta}{q_1},\quad \frac{1}{r_\theta} = \frac{1-\theta}{r_0} + \frac{\theta}{r_1}, \quad \beta_\theta = (1-\theta)\beta_0 + \theta \beta_1.
    \end{align*}
    Then we have
    \begin{align*}
        [\Z^{p_0,q_0,r_0}_{\beta_0} , \Z^{p_1,q_1,r_1}_{\beta_1} ]_{\theta} = \Z^{p_\theta, q_\theta,r_\theta}_{\beta_\theta}
    \end{align*}
    with equivalent norms.
\end{lemma}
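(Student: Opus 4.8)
The plan is to follow the standard strategy for complex interpolation of function spaces: establish the two inclusions separately, using an embedding into a weighted vector-valued Lebesgue space for one direction and a duality argument for the other. First I would set $Y_i \coloneqq \L^{q_i}_{t;\beta_i}\L^{p_i}_x\L^{r_i}_{s,y}$ for $i\in\{0,1\}$ as in Section~\ref{subsec: duality >1}, and recall from Lemma~\ref{lem: vv-embedding} that the map $i$ realizes each $\Z^{p_i,q_i,r_i}_{\beta_i}$ as a \emph{complemented} subspace of $Y_i$: indeed, the composition of $i$ with the natural restriction $Y_i \to \Z^{p_i,q_i,r_i}_{\beta_i}$, $F \mapsto F(t,x,\cdot,\cdot)$ evaluated on the appropriate fibre, together with the averaging that recovers a function on $\IR^{d+1}_+$, gives a bounded projection onto $\Rg(i)$. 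Since the underlying measure spaces for $Y_0$ and $Y_1$ agree (only the exponents and the $t$-weight change) and all exponents lie in $(1,\infty]$ with at least one of the triples finite, the Calderón interpolation of vector-valued Lebesgue spaces gives $[Y_0,Y_1]_\theta = \L^{q_\theta}_{t;\beta_\theta}\L^{p_\theta}_x\L^{r_\theta}_{s,y}$. By the retract/complemented-subspace stability of the complex method (see \cite[Chp.\@ 4]{Bergh_Loefstroem}), it follows that $[\Z^{p_0,q_0,r_0}_{\beta_0},\Z^{p_1,q_1,r_1}_{\beta_1}]_\theta$ is the image under the projection of $[Y_0,Y_1]_\theta$, and unraveling this gives the inclusion $[\Z^{p_0,q_0,r_0}_{\beta_0},\Z^{p_1,q_1,r_1}_{\beta_1}]_\theta \hookrightarrow \Z^{p_\theta,q_\theta,r_\theta}_{\beta_\theta}$ directly, and in fact an equality, \emph{provided} the projections are compatible across $i=0,1$. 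The cleanest route is to note that the averaging projection $\pi$ is given by the same formula for both values of $i$, hence is a bona fide morphism of the couples $(Y_0,Y_1)$ and $(\Z^{p_0,q_0,r_0}_{\beta_0},\Z^{p_1,q_1,r_1}_{\beta_1})$, and $i$ is a morphism in the other direction with $\pi \circ i = \Id$; this exhibits the $\Z$-couple as a retract of the Lebesgue couple, so the complex interpolant is computed by transport.

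If one prefers to avoid the retract argument, one proves the two inclusions by hand. For ``$\hookrightarrow$'': given $f$ in the complex interpolant, represented by an analytic $Y_0+Y_1$-valued function $\mathbf{F}$ on the strip with the right boundary bounds, push it through the projection $\pi$ to get an analytic family realizing $i(f)$, and read off $\|f\|_{\Z^{p_\theta,q_\theta,r_\theta}_{\beta_\theta}} = \|i(f)\|_{[Y_0,Y_1]_\theta}$ using Lemma~\ref{lem: vv-embedding} and the known interpolation of the $Y_i$. For ``$\hookleftarrow$'', i.e. $\Z^{p_\theta,q_\theta,r_\theta}_{\beta_\theta} \hookrightarrow [\Z^{p_0,q_0,r_0}_{\beta_0},\Z^{p_1,q_1,r_1}_{\beta_1}]_\theta$, use duality: by Theorem~\ref{thm: duality banach range} each $\Z^{p_i,q_i,r_i}_{\beta_i}$ is a dual space (equivalently, reflexive, in the present range where all exponents are in $(1,\infty)$ or with the usual care when an exponent equals $\infty$), so the Calderón duality theorem \cite[Thm.\@ 4.5.1]{Bergh_Loefstroem} gives $[\Z^{p_0,q_0,r_0}_{\beta_0},\Z^{p_1,q_1,r_1}_{\beta_1}]_\theta{}' = [\Z^{p_0',q_0',r_0'}_{-\beta_0},\Z^{p_1',q_1',r_1'}_{-\beta_1}]_\theta$, and the ``$\hookrightarrow$'' inclusion already proven for the dual exponents, combined with Theorem~\ref{thm: duality banach range} again, yields the missing inclusion by a standard bipolar/norming argument. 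Here one must be a little careful about which of the two triples is finite, and handle the endpoint $\infty$ (where a space need not be reflexive) by the Calderón--Mitrea version of duality, or simply by noting that the hypothesis guarantees at least one triple is finite and the complex method only requires density of the intersection in one of the endpoint spaces.

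The main obstacle I expect is the bookkeeping around the hypothesis ``$\max\{q_0,p_0,r_0\}<\infty$ or $\max\{q_1,p_1,r_1\}<\infty$'': this is exactly the density condition needed so that the complex method (both in Calderón's and in the Kalton--Mitrea/Egert--Kosmala formulations) behaves well, and one must check that $\Z^{p_0,q_0,r_0}_{\beta_0}\cap\Z^{p_1,q_1,r_1}_{\beta_1}$ is dense in the finite-exponent endpoint --- this follows from Proposition~\ref{prop: complete and dense subspace}, since compactly supported $\L^{\max}$-functions lie in every $\Z$-space by Lemma~\ref{lem: compare Z with Lr}. A secondary, more technical point is the interpolation identity $[Y_0,Y_1]_\theta = \L^{q_\theta}_{t;\beta_\theta}\L^{p_\theta}_x\L^{r_\theta}_{s,y}$ for \emph{iterated} vector-valued $\L^p$ spaces with a power weight in the outer variable; this is classical (Calderón, and \cite[Sec.\@ 5.1]{Bergh_Loefstroem} for the vector-valued scalar case, iterated) but the weight $t^{-(1+\beta_i q_i)}$ must be tracked so that its $\theta$-power interpolates to the correct exponent $-(1+\beta_\theta q_\theta)$, which is a direct computation. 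Once these two points are in place, the rest is a routine assembly of the retract lemma for complex interpolation together with Lemma~\ref{lem: vv-embedding} and Theorem~\ref{thm: duality banach range}.
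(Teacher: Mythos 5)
Your high-level plan (embedding into weighted vector-valued Lebesgue spaces for one inclusion, duality for the other) matches the paper's proof, and your duality argument for the inclusion $\Z^{p_\theta,q_\theta,r_\theta}_{\beta_\theta}\hookrightarrow[\Z^{p_0,q_0,r_0}_{\beta_0},\Z^{p_1,q_1,r_1}_{\beta_1}]_\theta$ is exactly the paper's Step 2: dualize the easy inclusion with primed exponents, invoke reflexivity plus density of the intersection (Cor.\@ 4.5.2 in Bergh--L\"ofstr\"om; you cite Thm.\@ 4.5.1, which is the upper-method statement feeding into that corollary, but the substance is the same), and apply Theorem~\ref{thm: duality banach range} twice. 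Your reading of the hypothesis on $\max\{q_i,p_i,r_i\}$ as the density/reflexivity condition is also correct.

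The genuine gap is in your treatment of the easy inclusion $[\Z^{p_0,q_0,r_0}_{\beta_0},\Z^{p_1,q_1,r_1}_{\beta_1}]_\theta\hookrightarrow\Z^{p_\theta,q_\theta,r_\theta}_{\beta_\theta}$. Both your primary (retract) route and your ``by hand'' alternative hinge on the existence of a bounded averaging projection $\pi\colon Y_i\to\Z^{p_i,q_i,r_i}_{\beta_i}$ with $\pi\circ i=\Id$ compatibly across the couple, yet you never construct $\pi$ or prove it is bounded --- you describe what it should look like and assert the conclusion. That boundedness is a substantive Whitney-box averaging estimate (Minkowski/Jensen/Fubini on nested Whitney regions); the paper does carry out a closely related computation, but only inside the proof of Theorem~\ref{thm: duality banach range} and only for the dual exponents, so it is not available as a black box. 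More to the point, the paper does not need $\pi$ at all for this inclusion: since $i$ is a bounded morphism of the couple $(\Z^{p_0,q_0,r_0}_{\beta_0},\Z^{p_1,q_1,r_1}_{\beta_1})\to(Y_0,Y_1)$, functoriality of the complex method gives $i\colon[\Z^{p_0,q_0,r_0}_{\beta_0},\Z^{p_1,q_1,r_1}_{\beta_1}]_\theta\to[Y_0,Y_1]_\theta=Y_\theta$, and then Lemma~\ref{lem: vv-embedding} gives $\|f\|_{\Z^{p_\theta,q_\theta,r_\theta}_{\beta_\theta}}\simeq\|i(f)\|_{Y_\theta}\lesssim\|f\|_{[\Z_0,\Z_1]_\theta}$. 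In short: the retract argument would, if completed, prove both inclusions simultaneously and is a legitimate alternative strategy, but as written it is incomplete, and for the easy inclusion the paper's argument is both shorter and does not require the projection.
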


\begin{proof}
    We show both inclusions separately.\\

    \noindent \textbf{Step 1: $ [\Z^{p_0,q_0,r_0}_{\beta_0} , \Z^{p_1,q_1,r_1}_{\beta_1} ]_{\theta} \subset  \Z^{p_\theta, q_\theta,r_\theta}_{\beta_\theta}$.}
    We use the embedding $i$ of Lemma~\ref{lem: vv-embedding} and \cite[Thm.\@ 14.3.1]{AIB} to get boundedness of
    \begin{align*}
        i: [\Z^{p_0,q_0,r_0}_{\beta_0} , \Z^{p_1,q_1,r_1}_{\beta_1} ]_{\theta}\to [\L^{q_0}_{t;\beta_0}\L^{p_0}_{\vphantom{t;\beta_0}x}\L^{r_0}_{\vphantom{t;\beta_0}s,y} , \L^{q_1}_{t;\beta_1}\L^{p_1}_{\vphantom{t;\beta_0}x}\L^{r_1}_{\vphantom{t;\beta_0}s,y}]_{\theta} \simeq \L^{q_\theta}_{t;\beta_\theta}\L^{p_\theta}_{\vphantom{t;\beta_\theta}x}\L^{r_\theta}_{\vphantom{t;\beta_\theta}s,y}.
    \end{align*}
    Thus, we have
    \begin{align*}
        \|f\|_{\Z^{p_\theta, q_\theta,r_\theta}_{\beta_\theta}} &\simeq \|i(f)\|_{\L^{q_\theta}_{t;\beta_\theta}\L^{p_\theta}_{\vphantom{t;\beta_\theta}x}\L^{r_\theta}_{\vphantom{t;\beta_\theta}s,y}}\lesssim \|f\|_{[\Z^{p_0,q_0,r_0}_{\beta_0} , \Z^{p_1,q_1,r_1}_{\beta_1} ]_{\theta}}.
    \end{align*}

    \noindent \textbf{Step 2: $ \Z^{p_\theta, q_\theta,r_\theta}_{\beta_\theta}\subset [\Z^{p_0,q_0,r_0}_{\beta_0} , \Z^{p_1,q_1,r_1}_{\beta_1} ]_{\theta} $.}
    This inclusion can be obtained from Step~1 by virtue of a duality argument. First, using Theorem~\ref{thm: duality banach range} and dualizing the inclusion of Step~1, we deduce
    \begin{align*}
        \Z^{p_\theta, q_\theta,r_\theta}_{\beta_\theta} = (\Z^{p_\theta', q_\theta',r_\theta'}_{-\beta_\theta})' &\subset ([\Z^{p_0',q_0',r_0'}_{-\beta_0} , \Z^{p_1',q_1',r_1'}_{-\beta_1} ]_{\theta})',
    \end{align*}
    where we used that $p_\theta, q_\theta,r_\theta$ are finite by construction. Second, to calculate the interpolation space on the right-hand side of the last inclusion,
    we observe that at least one of the spaces $\Z^{p_0',q_0',r_0'}_{-\beta_0} $ or $\Z^{p_1',q_1',r_1'}_{-\beta_1}$ is reflexive by Theorem~\ref{thm: duality banach range} (keep in mind that $p_i, q_i, r_i > 1$ in both cases). Moreover, their intersection is dense in both spaces due to Proposition~\ref{prop: complete and dense subspace} and $p_i, q_i, r_i > 1$. Thus, we can apply \cite[Cor.\@ 4.5.2]{Bergh_Loefstroem} to get
    \begin{align*}
         \Z^{p_\theta, q_\theta,r_\theta}_{\beta_\theta} \subset([\Z^{p_0',q_0',r_0'}_{-\beta_0} , \Z^{p_1',q_1',r_1'}_{-\beta_1} ]_{\theta})'
        &= [(\Z^{p_0',q_0',r_0'}_{-\beta_0})' , (\Z^{p_1',q_1',r_1'}_{-\beta_1})']_{\theta}.
    \end{align*}
    Third, still using that $p_i',q_i',r_i'$ are finite because $p_i,q_i,r_i>1$, Theorem~\ref{thm: duality banach range} yields
    \begin{align*}
        \Z^{p_\theta, q_\theta,r_\theta}_{\beta_\theta} \subset  [(\Z^{p_0',q_0',r_0'}_{-\beta_0})' , (\Z^{p_1',q_1',r_1'}_{-\beta_1})' ]_{\theta}
        =[\Z^{p_0,q_0,r_0}_{\beta_0} , \Z^{p_1,q_1,r_1}_{\beta_1} ]_{\theta},
    \end{align*}
    which proves the claim.
\end{proof}

To get an interpolation result also in the quasi-Banach setting, we use on the one hand an abstract result to identify the interpolation spaces with Calderón products, and on the other hand convexity arguments to reduce matters to interpolation in the Banach setting. More precisely, we use the following abstract result by Egert and Kosmala \cite[Thm.\@ 1.1]{Egert_Kosmala}, which only needs one of the two interpolants to be separable. This way, endpoint spaces can be treated as well.\\

We state the result first and clarify the notions appearing in it right after.

\begin{proposition}
\label{prop: egert-kosmala}
    Let $X_0, X_1$ be $A$-convex quasi-Banach function spaces over a separable and $\sigma$-finite measure space $\Omega$, one of which is separable. Then, $X_0 + X_1$ is $A$-convex, and for every $\theta\in(0,1)$ one has
    \begin{align*}
        [X_0,X_1]_\theta = (X_0)^{1-\theta}(X_1)^\theta
    \end{align*}
    up to equivalent norms, where $(X_0)^{1-\theta}(X_1)^\theta$ is called the Calderón product of $X_0$ and $X_1$. It consists of all $f\in \L^0(\Omega)$ such that
    \begin{align*}
        \|f\|_{(X_0)^{1-\theta}(X_1)^\theta} \coloneqq \inf \Big\{\|g\|_{X_0}^{1-\theta} \|h\|_{X_1}^{\theta}: |f|\leq |g|^{1-\theta} |h|^\theta, \, g\in X_0 , h\in X_1 \Big\}<\infty.
    \end{align*}
\end{proposition}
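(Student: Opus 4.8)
This is a Calder\'on--Mityagin type description of the Kalton--Mitrea complex method for $A$-convex quasi-Banach function spaces; I only indicate the strategy and refer to \cite{Kalton_Mitrea,Egert_Kosmala} for the (lengthy) details. Abbreviate $X_\theta\coloneqq(X_0)^{1-\theta}(X_1)^{\theta}$. The plan is to prove the two continuous inclusions $X_\theta\hookrightarrow[X_0,X_1]_\theta$ and $[X_0,X_1]_\theta\hookrightarrow X_\theta$ separately; $A$-convexity is essential for the second, separability of one interpolant for the first, and the $A$-convexity of $X_0+X_1$ drops out of the same estimates. A useful reduction (which one may either exploit directly or else bypass by working with Kalton's plurisubharmonicity tools throughout) is that, by Kalton's characterisation of $L$-convexity --- which for K\"othe function spaces coincides with $A$-convexity --- each $X_j$ is, after an equivalent renorming, $p$-convex for some $p\in(0,1]$. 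Taking $p$ minimal among the two, the power space $X_j^{p}$, with $f\in X_j^{p}\iff|f|^{1/p}\in X_j$ and $\|f\|_{X_j^{p}}\coloneqq\||f|^{1/p}\|_{X_j}^{p}$, is then a Banach lattice; it inherits separability from $X_j$ since $\Omega$ is separable and $\sigma$-finite, and the Calder\'on product commutes with this power, $(X_0^{p})^{1-\theta}(X_1^{p})^{\theta}=(X_\theta)^{p}$. Hence the estimates below may be carried out for Banach lattices and transferred back.

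For $X_\theta\hookrightarrow[X_0,X_1]_\theta$ one uses Calder\'on's explicit construction. Given $f$ with $|f|\le|g|^{1-\theta}|h|^{\theta}$, $g\in X_0$, $h\in X_1$, and $\varepsilon>0$, put
\begin{align*}
    F_\varepsilon(z)\coloneqq\sgn(f)\,|g|^{1-z}\,|h|^{z}\,e^{\varepsilon(z^{2}-\theta^{2})},\qquad z\in\overline S\coloneqq\{0\le\Re z\le1\}.
\end{align*}
Then $F_\varepsilon(\theta)=f$, $|F_\varepsilon(it)|\le|g|$ and $|F_\varepsilon(1+it)|\le e^{\varepsilon}|h|$, so the boundary traces lie in $X_0$ resp.\ $X_1$ with quasi-norms $\lesssim\|g\|_{X_0}$ resp.\ $\|h\|_{X_1}$, uniformly in $\varepsilon$. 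The substantive point --- and the main obstacle --- is that $z\mapsto F_\varepsilon(z)$ is a bounded continuous map $\overline S\to X_0+X_1$, analytic in the interior, with boundary restrictions continuous into $X_0$ resp.\ $X_1$, so that $F_\varepsilon$ is admissible for the Kalton--Mitrea method: the difference quotient $w^{-1}(F_\varepsilon(z+w)-F_\varepsilon(z))$ converges pointwise to $F_\varepsilon(z)(\log|h|-\log|g|+2\varepsilon z)$, and one has to verify that this convergence takes place in the quasi-norm of $X_0+X_1$. This is done by first truncating $g,h$ to bounded functions supported on sets of finite measure (where the powers obey $\L^\infty$-type bounds) and then removing the truncation by a density argument that uses exactly that one of $X_0,X_1$ has order-continuous quasi-norm, i.e.\ is separable --- this is the Egert--Kosmala refinement of the two-sided separability in \cite{Kalton_Mitrea}, and it is what permits endpoint exponents. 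Granting admissibility, letting $\varepsilon\to0$ gives $f\in[X_0,X_1]_\theta$ with $\|f\|_{[X_0,X_1]_\theta}\lesssim\|g\|_{X_0}^{1-\theta}\|h\|_{X_1}^{\theta}$, and an infimum over admissible pairs $(g,h)$ finishes the inclusion.

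For the reverse inclusion $[X_0,X_1]_\theta\hookrightarrow X_\theta$ one invokes $A$-convexity through Kalton's plurisubharmonicity estimate: after an equivalent renorming the quasi-norm of each $X_j$ obeys
\begin{align*}
    \Big\|\exp\!\int_{\IR}\log|G_\sigma|\,\d\mu(\sigma)\Big\|_{X_j}\le\exp\!\int_{\IR}\log\|G_\sigma\|_{X_j}\,\d\mu(\sigma)
\end{align*}
for probability measures $\mu$ and admissible $X_j$-valued families $(G_\sigma)_\sigma$. Let $F$ be admissible for $(X_0,X_1)$ with $F(\theta)=f$ and boundary norms $<1$. For a.e.\ $x\in\Omega$ the scalar function $z\mapsto F(z)(x)$ is analytic, hence $x\mapsto\log|F(\cdot)(x)|$ is subharmonic on the open strip, and its Poisson representation gives, pointwise almost everywhere, $|f|=|F(\theta)|\le g^{1-\theta}h^{\theta}$ with
\begin{align*}
    g\coloneqq\exp\!\int_{\IR}\log|F(it)|\,\d\mu^{0}_{\theta}(t),\qquad h\coloneqq\exp\!\int_{\IR}\log|F(1+it)|\,\d\mu^{1}_{\theta}(t),
\end{align*}
where $\mu^{0}_{\theta},\mu^{1}_{\theta}$ are the harmonic measures of $\theta$ relative to the two boundary lines of the strip. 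The displayed plurisubharmonicity estimate, applied to the families $(F(it))_t$ and $(F(1+it))_t$, bounds $\|g\|_{X_0}$ and $\|h\|_{X_1}$ by the respective boundary suprema of $\|F(\cdot)\|_{X_j}$, whence $f\in X_\theta$ with $\|f\|_{X_\theta}\le\|g\|_{X_0}^{1-\theta}\|h\|_{X_1}^{\theta}\lesssim\|f\|_{[X_0,X_1]_\theta}$. Combining the two inclusions yields $[X_0,X_1]_\theta=X_\theta$ with equivalent norms, and the $A$-convexity of $X_0+X_1$ follows from that of $X_0,X_1$ by the same bookkeeping applied to the sum space.
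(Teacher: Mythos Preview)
The paper does not prove this proposition at all: it is quoted verbatim as \cite[Thm.~1.1]{Egert_Kosmala} and used as a black box, so there is no ``paper's own proof'' to compare against. Your sketch is therefore not in competition with anything in the paper; it is a plausible outline of the argument in the cited reference (Calder\'on's explicit test function for one inclusion, Kalton's plurisubharmonicity/log-convexity for the other, with the one-sided separability used to justify admissibility via order-continuity). Since the paper treats the result as external input, simply citing \cite{Egert_Kosmala} --- as you already do --- is exactly what the paper does, and no further proof is expected here.
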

Recall that a quasi-Banach space $X\subset \L^0(\Omega)$ is called a quasi-Banach function space, if
    \begin{enumerate}
        \item there exists $f\in X$ such that $f>0$ almost everywhere,

        \item $f\in X$ and $g\in \L^0(\Omega)$ with $|g|\leq |f|$ almost everywhere implies $g\in X$.
    \end{enumerate}
It is known that a quasi-Banach function space $X$ is $A$-convex if and only if it is \emph{lattice $\alpha$-convex} for some $\alpha>0$ (see \cite[Thm.\@ 7.8]{Kalton_Mayboroda_Mitrea}), where the later means that
\begin{align*}
    \Big\|\Big(\sum\limits_{i=1}^m |f_i|^\alpha \Big)^\frac{1}{\alpha} \Big\|_{X} \leq \Big(\sum\limits_{i=1}^m \|f_i\|^\alpha \Big)^\frac{1}{\alpha}
\end{align*}
for all finite families $(f_i)_{i=1,\dots, m} \subset X$. Moreover, this property implies that
\begin{align*}
    [X]^\alpha \coloneqq \{f\in \L^0(\Omega): |f|^\frac{1}{\alpha}\in X\}\quad \text{equipped with} \quad \|f\|_{[X]^\alpha} \coloneqq \||f|^\frac{1}{\alpha} \|_X^\alpha
\end{align*}
is a Banach function space as well, called the \emph{$\alpha$-convexification} of $X$; see \cite[Prop.\@ 3.23]{Kosmala}. Furthermore, by a straightforward computation, one can show that the Calderón product commutes with the process of convexification. More precisely, for two lattice $\alpha$-convex quasi-Banach function spaces $X_0,X_1$ we have for all $\theta\in (0,1)$ that
\begin{align}
\label{eq: commutation}
    \big[(X_0)^{1-\theta}(X_1)^\theta \big]^\alpha = \big([X_0]^\alpha\big)^{1-\theta} \big([X_1]^\alpha \big)^\theta.
\end{align}
Now, we check that our $\Z$-spaces fulfill all necessary properties so that the just explained machinery applies to them.

\begin{lemma}
\label{lem: Z-space convex}
    Let  $0< p,q, r \leq \infty$ and $\beta \in \IR$. Then, the space $\Z^{p,q,r}_\beta$ is a quasi-Banach function space, $\alpha$-convex for all $0<\alpha<\min\{p,q,r\}$ and $\Z^{p,q,r}_\beta = [\Z^{p/\alpha,q/\alpha,r/\alpha}_{\alpha\beta}]^\frac{1}{\alpha}$. Furthermore, the space $\Z^{p,q,r}_\beta$ is separable if $\max\{p,q,r\}<\infty$.
\end{lemma}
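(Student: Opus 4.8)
The plan is to verify the four asserted properties of $\Z^{p,q,r}_\beta$ in turn, each of which is either a direct unwinding of the definition or an application of a result proved earlier in the paper. None of these is hard; the only point requiring genuine care is the separability claim, where one must produce a countable dense subset.

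First I would check that $\Z^{p,q,r}_\beta$ is a quasi-Banach function space in the sense recalled above. Completeness is Proposition~\ref{prop: complete and dense subspace}. Property (2), the lattice ideal property, is immediate: the (quasi-)norm $\|f\|_{\Z^{p,q,r}_\beta}$ is monotone in $|f|$ since it is built only from absolute values of $f$ under (averaged) integrals and $\L^p$-norms, so $|g|\le|f|$ a.e. and $f\in\Z^{p,q,r}_\beta$ force $g\in\Z^{p,q,r}_\beta$. Property (1), the existence of a strictly positive element, is also easy: take any strictly positive measurable $f$ on $\IR^{d+1}_+$ that decays fast enough at $0$, $\infty$ and spatial infinity — e.g. $f(s,y)=s^{\beta}\,e^{-s-s^{-1}-|y|^2}$ — and observe directly that $\|f\|_{\Z^{p,q,r}_\beta}<\infty$ (the inner Whitney average over $W(t,x)$ is comparable to $e^{-t-t^{-1}}$ times a Gaussian in $x$, whose $\L^p_x$-norm times $t^{0}$ is integrable against $\d t/t$).

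Next I would treat $\alpha$-convexity and the convexification identity simultaneously, since they are two sides of the same computation. For $0<\alpha<\min\{p,q,r\}$ one has $p/\alpha,q/\alpha,r/\alpha\in(1,\infty]$, so $\Z^{p/\alpha,q/\alpha,r/\alpha}_{\alpha\beta}$ is a Banach space by the triangle inequality in those Lebesgue scales. The identity $\Z^{p,q,r}_\beta=[\Z^{p/\alpha,q/\alpha,r/\alpha}_{\alpha\beta}]^{1/\alpha}$ is just the statement $\||f|^{\alpha}\|_{\Z^{p/\alpha,q/\alpha,r/\alpha}_{\alpha\beta}}=\|f\|_{\Z^{p,q,r}_\beta}^{\alpha}$, which one reads off the definition by pulling the exponent $\alpha$ through: $\bigl(\fint\fint |s^{-\beta}f|^{r}\bigr)^{1/r}$ raised to the power $\alpha$ equals $\bigl(\fint\fint |s^{-\alpha\beta}|f|^{\alpha}|^{r/\alpha}\bigr)^{\alpha/r}$, and similarly for the $\L^p_x$ and $\L^q_t(\d t/t)$ layers. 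Lattice $\alpha$-convexity of $\Z^{p,q,r}_\beta$ then follows: given $(f_i)_{i=1}^m$, $\|(\sum|f_i|^{\alpha})^{1/\alpha}\|_{\Z^{p,q,r}_\beta}^{\alpha}=\|\sum|f_i|^{\alpha}\|_{\Z^{p/\alpha,q/\alpha,r/\alpha}_{\alpha\beta}}\le\sum\||f_i|^{\alpha}\|_{\Z^{p/\alpha,q/\alpha,r/\alpha}_{\alpha\beta}}=\sum\|f_i\|_{\Z^{p,q,r}_\beta}^{\alpha}$, using the triangle inequality in the Banach space $\Z^{p/\alpha,q/\alpha,r/\alpha}_{\alpha\beta}$; by the Kalton--Mayboroda--Mitrea criterion (\cite[Thm.\@ 7.8]{Kalton_Mayboroda_Mitrea}) this is equivalent to $A$-convexity.

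The separability claim when $\max\{p,q,r\}<\infty$ is the one step I would spell out most carefully, and it is the main (mild) obstacle. By Proposition~\ref{prop: complete and dense subspace}, the compactly supported functions in $\L^r(\IR^{d+1}_+)$ are dense in $\Z^{p,q,r}_\beta$. On a fixed compact $K\subset\IR^{d+1}_+$, Lemma~\ref{lem: compare Z with Lr} gives $\|\ind_K f\|_{\Z^{p,q,r}_\beta}\simeq\|f\|_{\L^r(K,\d y\,\d s/s)}$, so it suffices that $\L^r(K,\d y\,\d s/s)$ is separable for each $K$ — which holds since $1\le r<\infty$ (or, if $r<1$, since it is still a separable quasi-Banach space) over a finite measure space with a countable generating algebra. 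Exhausting $\IR^{d+1}_+$ by a countable increasing family of such compacta $K_n$ and taking the union of countable dense subsets of the $\L^r(K_n)$, one obtains a countable set whose closure contains every compactly supported $\L^r$-function, hence is dense in $\Z^{p,q,r}_\beta$. This completes the proof.
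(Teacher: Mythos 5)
Your proof is correct and matches the paper's intent on three of the four points, but you take a genuinely different (and more elementary) route for separability. The paper dispatches the quasi-Banach-function-space property and the convexification identity $\Z^{p,q,r}_\beta=[\Z^{p/\alpha,q/\alpha,r/\alpha}_{\alpha\beta}]^{1/\alpha}$ as ``clear,'' and proves $\alpha$-convexity by the same computation you give, just written directly on the nested integrals rather than passing explicitly through the convexification; your fleshing out of a concrete strictly positive element is a nice supplement but not a different idea. The real divergence is separability: the paper's proof is a one-liner invoking Lemma~\ref{lem: vv-embedding}, the isomorphic embedding $i:\Z^{p,q,r}_\beta\hookrightarrow\L^q_{t;\beta}\L^p_x\L^r_{s,y}$, since a subspace of a separable space is separable and the weighted vector-valued Lebesgue space is separable when $\max\{p,q,r\}<\infty$. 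You instead build a countable dense set from scratch: Proposition~\ref{prop: complete and dense subspace} gives density of compactly supported $\L^r$-functions, Lemma~\ref{lem: compare Z with Lr} gives the norm comparison $\|\ind_K\cdot\|_{\Z^{p,q,r}_\beta}\simeq\|\cdot\|_{\L^r(K,\d y\,\d s/s)}$ on each compact $K$, and you exhaust by compacta. Both arguments are valid; the paper's is shorter and re-uses infrastructure already in place for the duality theorem, while yours is more hands-on and makes the countable dense set explicit, at the cost of having to mind that the constants in Lemma~\ref{lem: compare Z with Lr} depend on $K$ (which is harmless here, since any compactly supported approximant lives in a single $K_n$). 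One small cosmetic inaccuracy: your parenthetical claim that the Whitney average of your explicit positive element is ``comparable to $e^{-t-t^{-1}}$'' is not quite right on the exact exponent (the $r$-th-power average and root shave the constant in the exponential to something like $e^{-t/2}$ up to polynomial factors), but this does not affect the conclusion that the resulting $\L^q_{\d t/t}\L^p_x$-norm is finite.
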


\begin{proof}
    The facts that $\Z^{p,q,r}_\beta$ is a quasi-Banach function space and that $\Z^{p,q,r}_\beta = [\Z^{p/\alpha,q/\alpha,r/\alpha}_{\alpha\beta}]^\frac{1}{\alpha}$ holds are clear. If $\max\{p,q,r\}<\infty$, then separability follows by Lemma~\ref{lem: vv-embedding} in conjunction with separability of weighted vector-valued Lebesgue spaces. Finally, for $0<\alpha<\min\{p,q,r\}$, we have
    \begin{align*}
        \Big\|\Big(\sum\limits_{i=1}^m |f_i|^\alpha \bigg)^\frac{1}{\alpha}\Big\|_{\Z^{p,q,r}_\beta} &= \bigg(\int\limits_0^\infty \bigg\|\bigg(\fint\limits_{\frac{t}{2}}^t \fint\limits_{B(\cdot,t)} \Big|s^{-\beta}\Big(\sum\limits_{i=1}^m |f_i|^\alpha\bigg)^\frac{1}{\alpha} \Big|^r\,\d y \d s \bigg)^\frac{1}{r} \bigg\|_{\L^p}^q\,\frac{\d t}{t}\bigg)^\frac{1}{q}\\
        &= \bigg(\int\limits_0^\infty \bigg\|\bigg(\fint\limits_{\frac{t}{2}}^t \fint\limits_{B(\cdot,t)} \Big(\sum\limits_{i=1}^m s^{-\alpha\beta }|f_i|^\alpha\bigg)^\frac{r}{\alpha}\,\d y \d s \bigg)^\frac{\alpha}{r} \bigg\|_{\L^{\frac{p}{\alpha}}}^\frac{q}{\alpha}\,\frac{\d t}{t}\bigg)^{\frac{\alpha}{ q} \frac{1}{\alpha}}\\
        &\leq  \Big(\sum\limits_{i=1}^m \| |f_i|^\alpha \|_{\Z^{p/\alpha, q/\alpha , r/\alpha}_{\alpha \beta}} \Big)^\frac{1}{\alpha}\\
        &=\Big(\sum\limits_{i=1}^m \| f_i\|_{\Z^{p,q,r}_{\beta}}^\alpha \Big)^\frac{1}{\alpha},
    \end{align*}
    which proves the claim.
\end{proof}

\begin{proof}[Proof of Theorem~\ref{thm: complex int full range}]
    Set $0<\alpha<\min\{p_0,p_1,q_0,q_1, r_0,r_1\}$. Then, owing to Lemma~\ref{lem: Z-space convex}, we get by Proposition~\ref{prop: egert-kosmala} that
    \begin{align*}
         [\Z^{p_0,q_0,r_0}_{\beta_0} , \Z^{p_1,q_1,r_1}_{\beta_1} ]_{\theta} = (\Z^{p_0,q_0,r_0}_{\beta_0} )^{1-\theta}   (\Z^{p_1,q_1,r_1}_{\beta_1} )^\theta.
    \end{align*}
    Using property~\eqref{eq: commutation}, we get again with Lemma~\ref{lem: Z-space convex} that
    \begin{align*}
        (\Z^{p_0,q_0,r_0}_{\beta_0} )^{1-\theta}  (\Z^{p_1,q_1,r_1}_{\beta_1} )^\theta &=  \Big( \big[\Z^{p_0/\alpha,q_0/\alpha,r_0/\alpha}_{\alpha\beta_0} \big]^{\frac{1}{\alpha}} \Big)^{1-\theta}   \Big(  \big[\Z^{p_1/\alpha,q_1/\alpha,r_1/\alpha}_{\alpha\beta_1} \big]^{\frac{1}{\alpha}} \Big)^{\theta} \\
        &=\Big[ \big(\Z^{p_0/\alpha,q_0/\alpha,r_0/\alpha}_{\alpha\beta_0}  \big)^{1-\theta}   \big(\Z^{p_1/\alpha,q_1/\alpha,r_1/\alpha}_{\alpha\beta_1} \big)^{\theta} \Big]^{\frac{1}{\alpha}}.
    \end{align*}
    We make the crucial observation $\tfrac{p_i}{\alpha}, \tfrac{q_i}{\alpha}, \tfrac{r_i}{\alpha}>1$. Thus, by Proposition~\ref{prop: egert-kosmala} followed by Lemma~\ref{lem: complex int 1<} and Lemma~\ref{lem: Z-space convex},
    we get
    \begin{align*}
        \Big[\big(\Z^{p_0/\alpha,q_0/\alpha,r_0/\alpha}_{\alpha\beta_0} \big)^{1-\theta}  \big(\Z^{p_1/\alpha,q_1/\alpha,r_1/\alpha}_{\alpha\beta_1}\big)^{\theta} \Big]^{\frac{1}{\alpha}}  &= \Big[\big[\Z^{p_0/\alpha,q_0/\alpha,r_0/\alpha}_{\alpha\beta_0} , \Z^{p_1/\alpha,q_1/\alpha,r_1/\alpha}_{\alpha\beta_1}\big]_\theta \Big]^{\frac{1}{\alpha}}\\
         &= \Big[ \Z^{p_\theta/\alpha,q_\theta/\alpha,r_\theta/\alpha}_{\alpha\beta_\theta}\Big]^{\frac{1}{\alpha}}\\
         &=  \Z^{p_\theta, q_\theta,r_\theta}_{\beta_\theta}.
    \end{align*}
    Concatenating the three established identities proves the claim.
\end{proof}

\begin{remark}
    We would like to point out that the theory of strong factorization provides an alternative approach to complex interpolation due to its connection to Calderón products. For example, see \cite{Huang} in the case of weighted tent spaces with Whitney averages $\T^{p,r}_{q,\beta}$. However, it is unclear whether such  factorizations hold for $\Z^{p,q,r}_\beta$-spaces. This is why we took a different approach to complex interpolation theory, following the method in \cite{Kalton_Mayboroda_Mitrea}.
\end{remark}

\begin{remark}
    Another important feature of tent spaces is the atomic theory. We have not developed this concept for $\Z$-spaces. However, a good substitute is the decomposition observed in Remark~\ref{rem: trivial dyadic decomp} and valid for all parameters $p,q,r$ and $\beta$.
\end{remark}

\begin{bibdiv}
\begin{biblist}

\bibitem{Amenta}
A.\@ Amenta.
\newblock Tent Spaces over Metric Measure Spaces under Doubling and Related Assumptions.
\newblock {\em Oper.\@ Theory: Adv.\@ Appl.\@}~\textbf{240} (2014), 1--29.

\bibitem{Amenta2}
A.\@ Amenta.
\newblock Interpolation and Embeddings of Weighted Tent Spaces.
\newblock {\em J.\@ Fourier Anal.\@ Appl.\@}~\textbf{24} (2018), 108--140.

\bibitem{AA}
A.\@ Amenta and P.\@ Auscher.
\newblock {\em Elliptic boundary value problems with fractional regularity data. The first order approach.}
\newblock Providence, RI: American Mathematical Society (AMS), 2018.

\bibitem{Auscher}
P.\@ Auscher.
\newblock Change of angle in tent spaces.
\newblock {\em  C.\@ R.\@, Math.\@, Acad.\@ Sci.\@ Paris}~\textbf{349} (2011), 297--301.

\bibitem{Auscher_Bechtel}
P.\@ Auscher and S.\@ Bechtel.
\newblock Non-linear parabolic {PDEs} with rough coefficients and critical data: existence, uniqueness and regularity of weak solutions.
\newblock Preprint, available under \url{https://arxiv.org/abs/2601.05080}, 2026.

\bibitem{Auscher_Egert}
P.\@ Auscher and M.\@ Egert.
\newblock {\em Boundary value problems and Hardy spaces for elliptic systems with block structure.}
\newblock Cham: Birkhäuser, 2023.

\bibitem{Barton_Mayboroda}
A.\@ Barton and S.\@ Mayboroda.
\newblock {\em Layer potentials and boundary-value problems for second order elliptic operators with data in Besov spaces.}
\newblock  Providence, RI: American Mathematical Society (AMS), 2016.

\bibitem{Bergh_Loefstroem}
J.\@ Bergh and J.\@ Löfström.
\newblock {\em Interpolation spaces. An introduction.}
\newblock Cham: Springer, 1976.

\bibitem{Bourdaud}
G.\@ Bourdaud.
\newblock Realizations of homogeneous Besov and Lizorkin--Triebel spaces.
\newblock {\em Math.\@ Nachr.\@}~\textbf{286} (2013), 476--491.

\bibitem{Bui_Taibleson}
H.\@ Bui and M.\@ Taibleson.
\newblock The characterization of the Triebel--Lizorkin spaces for $p=\infty$.
\newblock {\em J.\@ Fourier Anal.\@ Appl.\@}~\textbf{6} (2000), 537--550.

\bibitem{Calderon}
A.\@ Calderón.
\newblock Intermediate spaces and interpolation, the complex method.
\newblock {\em Stud.\@ Math.\@}~\textbf{24} (1964), 113--190.

\bibitem{Coifman_Meyer_Stein}
R.\@ Coifman, Y.\@ Meyer and E.\@ Stein.
\newblock Some new function spaces and their applications to Harmonic analysis.
\newblock {\em J.\@ Funct.\@ Anal.\@}~\textbf{62} (1985), 304--335.

\bibitem{Egert_Kosmala}
M.\@ Egert and B.\@ Kosmala.
\newblock On complex interpolation for pairs of quasi-Banach function spaces with a non-separable component.
\newblock {\em Stud.\@ Math.}~\textbf{288} (2026), 27--40.

\bibitem{Grafakos}
L.\@ Grafakos.
\newblock {\em Classical Fourier Analysis. 3rd ed.}
\newblock New York, NY: Springer, 2014.

\bibitem{Haardt}
L.~Haardt.
\newblock A coherent theory of tent spaces and homogeneous Triebel--Lizorkin spaces.
\newblock Preprint, available under \url{https://arxiv.org/abs/2602.20305}, 2026.

\bibitem{HTV}
E.\@ Harboure, J.\@ Torrea and B.\@ Viviani.
\newblock A vector-valued approach to tent spaces.
\newblock {\em J.\@ Anal.\@ Math.\@}~\textbf{56} (1991), 125--140.

\bibitem{Hofmann_Mayboroda_McIntosh}
S.\@ Hofmann, S.\@ Mayboroda and A.\@ McIntosh.
\newblock Second order elliptic operators with complex bounded measurable coefficients in $\L^p$, {Sobolev} and {Hardy} spaces.
\newblock {\em Ann.\@ Sci.\@ {\'E}c.\@ Norm.\@ Sup{\'e}r.\@}~\textbf{44} (2011), 723--800.

\bibitem{Huang}
Y.\@ Huang.
\newblock Weighted tent spaces with Whitney averages:
factorization, interpolation and duality.
\newblock {\em Math.\@ Z.\@}~\textbf{282} (2016), 913--933.

\bibitem{AIB}
T.\@ Hytönen, J.\@ van Neerven, M.\@ Veraar and L.\@ Weis.
\newblock {\em Analysis in Banach Spaces.\@ Volume III: Harmonic Analysis and Spectral Theory}.
\newblock Cham: Springer, 2023.

\bibitem{Kalton_Mayboroda_Mitrea}
N.\@ Kalton, S.\@ Mayboroda and M.\@ Mitrea.
\newblock Interpolation of Hardy--Sobolev--Besov--Triebel--Lizorkin Spaces and Applications to Problems in Partial Differential Equations.
\newblock {\em Contemp.\@ Math.\@}~\textbf{445} (2007),  121--177.

\bibitem{Kalton_Mitrea}
N.\@ Kalton and M.\@ Mitrea.
\newblock Stability results on interpolation scales of quasi-Banach spaces and applications.
\newblock {\em Trans.\@ Am.\@ Math.\@ Soc.\@}~\textbf{350} (1998), 3903--3922.

\bibitem{Kempka}
H.~Kempka, M.~Sch{\"a}fer and T.~Ullrich.
\newblock General coorbit space theory for quasi-{Banach} spaces and inhomogeneous function spaces with variable smoothness and integrability.
\newblock {\em J.\@ Fourier Anal.\@ Appl.}~\textbf{23} (2017), 1348--1407.

\bibitem{Kosmala}
B.\@ Kosmala.
\newblock Products of Quasi-Banach Lattices.
\newblock Master's thesis at Technische
Universität Darmstadt, available under \url{https://tuprints.ulb.tu-darmstadt.de/id/eprint/28710}, 2023.

\bibitem{Ullrich2}
Y.~Liang, Y.~ Sawano, T.~Ullrich, D.~Yang and W.~Yuan.
\newblock New characterizations of Besov--Triebel--Lizorkin--Hausdorff spaces including coorbits and wavelets.
\newblock {\em J.\@ Fourier Anal.\@ Appl.}~\textbf{18} (2012), 1067--1111.

\bibitem{Ullrich3}
Y.~Liang, Y.~ Sawano, T.~Ullrich, D.~Yang and W.~Yuan.
\newblock A new framework for generalized Besov-type and Triebel--Lizorkin-type spaces.
\newblock {\em Diss.\@ Math.}~\textbf{489} (2013).

\bibitem{Moussai}
M.\@ Moussai.
\newblock Characterizations of realized homogeneous Besov and
Triebel--Lizorkin spaces via differences.
\newblock {\em Appl.\@ Math.\@, Ser.\@ B (Engl.\@ Ed.\@)}~\textbf{33} (2018), 188--208.

\bibitem{Peetre}
J.\@ Peetre.
\newblock {\em New Thoughts on Besov spaces}.
\newblock Durham, NC: Duke University, Mathematics Department, 1976.

\bibitem{Rauhut}
H.~Rauhut and T.~Ullrich.
\newblock Generalized coorbit space theory and inhomogeneous function spaces of Besov--Lizorkin--Triebel type.
\newblock {\em J.\@ Funct.\@ Anal.}~\textbf{260} (2011), 3299--3362.

\bibitem{Sawano}
Y.\@ Sawano.
\newblock {\em Theory of Besov spaces}.
\newblock Singapore: Springer, 2018.

\bibitem{Triebelchar}
H.\@ Triebel.
\newblock Characterizations of Besov--Hardy--Sobolev spaces: A unified approach.
\newblock {\em J.\@ Approx.\@ Theory}~\textbf{52} (1988), 162--203.

\bibitem{Triebel1}
H.\@ Triebel.
\newblock {\em Theory of function spaces}.
\newblock Cham: Birkhäuser, 1983.

\bibitem{Triebel2}
H.\@ Triebel.
\newblock {\em Theory of function spaces II}.
\newblock Basel: Birkhäuser, 1992.

\bibitem{Ullrich1}
T.\@ Ullrich.
\newblock Continuous characterizations of Besov--Lizorkin--Triebel spaces and new interpretations as coorbits.
\newblock {\em J.\@ Funct.\@ Spaces Appl.\@}~\textbf{2012} (2012).

\end{biblist}
\end{bibdiv}

\end{document}